\theoremstyle{plain}
\newtheorem{theorem}{Theorem}[section]
\newtheorem{corollary}[theorem]{Corollary}
\newtheorem{lemma}[theorem]{Lemma}
\newtheorem{proposition}[theorem]{Proposition}
\theoremstyle{example}
\theoremstyle{definition}
\newtheorem{definition}[theorem]{Definition}
\theoremstyle{remark}
\newtheorem{remark}[theorem]{Remark}
\theoremstyle{notation}
\theoremstyle{e}
\numberwithin{equation}{section}
\newcommand{\Spec}{\operatorname{Spec}}
\newcommand{\Hom}{\operatorname{Hom}}
\newcommand{\id}{\operatorname{id}}
\newcommand{\GL}{\operatorname{GL}}
\newcommand{\SL}{\operatorname{SL}}
\newcommand{\lsl}{\mathfrak{s}\mathfrak{l}}
\newcommand{\V}{\mathcal{O}}
\newcommand{\Ur}{\mathbb U^n_r}
\newcommand {\Lrn}{\mathbb Lat_r^n(K)}
\newcommand{\W}{\mathfrak{W}}
\newcommand{\bu}{\boldsymbol{u}}
\newcommand{\Cr}{\widetilde{\Gamma}_r}
\newcommand{\lie}[1]{\ensuremath{\mathfrak{L}(#1)}}
\newcommand{\ws}[1]{\ensuremath{\mathfrak{w}\mathfrak{s}\mathfrak{l}(n,#1 \mathcal{O})}}
\newcommand{\wg}[1]{\ensuremath{\mathfrak{w}\mathfrak{g}\mathfrak{l}(n,#1 \mathcal{O})}}
\newcommand{\bw}{\boldsymbol{w}}
\newcommand{\X}{\mathbb{X}}
\newcommand{\pr}{\mathbb{P}}
\newcommand{\x}{\boldsymbol{x}}
\newcommand{\y}{\boldsymbol{y}}
\newcommand{\be}{\boldsymbol{\epsilon}}
\newcommand{\Or}{\mathfrak{O}}
\newcommand{\F}{\mathfrak{F}}
\newcommand{\fu}{\mathfrak{u}}
\newcommand{\thetao}{\overline{\theta}}
\newcommand{\OOr}{\overline{\mathfrak{O}}}
\newcommand{\OF}{\overline{\mathfrak{F}}}
\newcommand{\lienr}{\mathcal{L}_{n,r}}
\newcommand{\fm}{\mathfrak{m}}
\newcommand{\ke}{k_{\epsilon}}
\begin{document}
\title[Varieties of Special Lattices]{The Normality of Certain Varieties of Special Lattices}
\author[W. J. Haboush]{William J. Haboush}
\address{Department of Mathematics \\ 273 Altgeld Hall \\ 1409 W. Green St.\\ The University of Illinois at Urbana Champaign \\ Urbana, Illinois \\ 61801}
\email{haboush@math.uiuc.edu}
\author[A. Sano]{Akira Sano}
\address{Center for Advanced Studies in Mathematics. Ben-Gurion University of the Negev, Be'er-Sheva 84108, Israel}
\email{sano@math.bgu.ac.il}
\thanks{The second author was supported by the Center for Advanced Studies in
Mathematics, Ben-Gurion University of the Negev, Be'er-Sheva, Israel}
\subjclass{20G25, 20G99, 14L15}
\keywords{Witt Vectors, Lattices, Lattice Varieties, Normal Varieties}
\begin{abstract}
We begin with a short exposition of the theory of lattice varieties. This includes a description of their orbit structure and smooth locus. We construct a flat cover of the lattice variety and show that it is a complete intersection. We show that the lattice variety is locally a complete intersection and nonsingular in codimension one and hence normal. We then prove a comparison theorem showing that this theory becomes parallel to the function field case if linear algebra is replaced by $p$-linear algebra. We then compute the Lie algebra of the special linear group over the truncated Witt vectors. We conclude by applying these results to show how to describe the canonical bundle on the lattice variety and we use the description to show that lattice varieties are not isomorphic to the analogous objects in the affine Grassmannian.  
\end{abstract}
\maketitle
\tableofcontents

\section{Introduction}\label{S:Int}
In two recent papers, the senior author of this paper constructed varieties parametrizing certain families of lattices over a certain discrete valuation ring. The direct limit of these varieties is viewed as a mixed characteristic analogue of the infinite Grassmann variety. Let $k$ be the algebraic closure of $\mathbb{F}_p,$ the field with $p$ elements, let $\V =\W(k)$ denote the ring of Witt vectors of $k$ (see \cite[p.41 \&  ff]{cl}) and let $K$ denote its field of fractions. Let
$\xi:k\rightarrow \V$ denote the extended multiplicative representative map, that is, the classical multiplicative representative on $k^*$ extended to $k$ by setting $\xi(0)$ equal to $0$. Then elements of $\V$ can all be uniquely represented as sums $\bw(x_0, x_1, \dots )=\sum_{i=0}^{\infty}\xi(x_i)^{p^{-i}}p^i$. The exponents are unavoidable if the multiplication is to be a polynomial map. The product of two Witt vectors with only one nonvanishing component is given by the formula:
\[
\xi(x)^{p^{-s}}p^s\cdot \xi(y)^{p^{-r}}p^r=\xi(x^{p^r}y^{p^s})^{p^{-r-s}}p^{r+s}.
\]
In particular, multiplication by $p$ is a purely inseparable map. This also means that if $r<0$, one must use a $p^{-r}$'th root of $x$ to describe the multiplication.   For this reason $K$, the field of fractions of $\V,$ does not carry an algebraic multiplication. The fraction field does carry a structure as an inductive limit of group schemes for the additive structure and this additive structure is defined over $\mathbb{F}_p$ (but not over $\mathbb{Z}).$ One may treat $\GL(n,\V)$ as a transformation group acting on the module $\V^n$. On the $n$-dimensional vector space $K^n$ it acts at most as a quasialgebraic group (see \cite{grP} for the definition). This is one of the central problems of this paper as well as the papers \cite{me} and \cite{me2}. 

Choose a free $\V$-submodule of $K^n$ which we write $F=\V^n$. A lattice is a free rank-$n$ $\V$-submodule of $K^n$. It is called \emph{special} if its $n$'th exterior power is equal to the $n$'th exterior power of $F$. Let $\Lrn$ denote the set of special lattices contained in $p^{-r}F$. The most significant results of \cite{me} concern this object. It is shown to be projective, reduced and irreducible over $k$ and it is shown to have an algebraic action of the group $\SL (n, \V)$. The orbit structure of $\Lrn$ under this group and under an Iwahori subgroup as well are described and the tangent space to a lattice in $\Lrn$ is computed. Section 2 of this paper consists of a review of these results.

It is interesting to contrast the constructions in this paper and those in \cite{me} and \cite{me2} with those used by various authors in studying the infinite Grassmannian. This theory, which originates in works of H. Garland \cite{hg} and of G. Segal \cite{ps} on the loop Grassmannian,
found its algebraic expression in such work as \cite{kl} or \cite{george}. There one considers points in classical Grassmann varieties fixed under a nilpotent endomorphism $T.$ The $T$-fixed subspaces of fixed codimension can be viewed as $k[[T]]$-modules residually of a given dimension and of a fixed length. Since one is working with a Grassmannian, points naturally correspond to vector subspaces and it is $T$-fixedness which is a nontrivial condition.  To construct the lattice varieties one must consider $k^*$-stable subgroups of the unipotent commutative group scheme $p^{-r}F/p^{(n-1)r}F$ of codimension $nr$ that are $\V$-submodules.
This means they must be fixed under the uniformizing parameter $p$ which is automatic since $p\in \mathbb{Z}$, and fixed under $k^*$ which is nontrivial. This is exactly opposite to the situation in the infinite Grassmannian. The $k^*$-closedness amounts to requiring that the subgroups correspond to projective subschemes of the quotient of $p^{-r}F/p^{(n-1)r}F\setminus \{ 0\}$ by the $k^*$-action. Hence one must work with a Hilbert scheme and in consequence one obtains a flat universal family but at the expense of not having a very explicit construction. The universal family of lattices is birational to the affine cone over the universal family associated to the Hilbert scheme of $(p^{-r}F/p^{(n-1)r}F\setminus \{ 0\})/k^*.$ The flatness of this family, which is automatic on the complement of the null section, is an important result in \cite{me} (Lemma 8 and Proposition 6, p.87 and Theorem 4, p.89). It is intimately related to the flatness of the universal family on the Hilbert scheme. All of the issues in this  paragraph and the one above are discussed at length in \cite{me}.

This paper contains several important new results. The first is a proof that $\Lrn $ is normal and locally a complete intersection. This is done in two steps. First in Section~\ref{bfd}, a ``matrix" cover of the lattice variety is constructed as a subset of an $n$-fold relative Cartesian power of the universal family of lattices, and it is shown to be a complete intersection. In Section~\ref{Smt}, the proof of normality is given. The first version of the proof, which was influenced by \cite{kp}, was given in \cite{ast}.  In Section \ref{CC} we extend a result of \cite{me2} to show that $\Lrn$ is naturally isomorphic to a scheme of lattices over a noncommutative power series ring (the Ore ring). Thus we can think of $\Lrn$ as either a mixed characteristic analogue of the function field infinite Grassmannian or a noncommutative analogue of the same scheme. In section \ref{pln} we use this second characterization to show that $\mathbb{L}at_1^n(K)$ contains an open subset equivariantly isomorphic to a set of $p$-linear nilpotent transformations. The equations for this scheme are rather different than those of the nilpotent cone.   In Section~\ref{OC} we show that the smooth locus of the lattice variety is a fiber space over $\pr_k^{n-1}$ with fibers that are affine spaces with a $p$-adic linear fractional modular action.

In \cite{me2} it is proven that the Picard group of the smooth locus of $\Lrn$ is isomorphic to $\mathbb{Z}$ with an ample generator. Moreover, as we already mentioned the smooth locus consists of a single $\SL(n,\V)$-orbit and its complement is of codimension two. Hence any finite reflexive sheaf and in particular any locally free sheaf is determined by its restriction to the smooth locus which is a homogeneous space. Hence by normality line bundles may be computed on the smooth locus and they are uniquely determined. Since the smooth locus is a homogeneous space, a line bundle is determined by a character on the stabilizer of a point.  In section 8 we carry out the computations necessary to describe the canonical bundle. We also observe that if the same calculation were carried out on the affine Grassmannian a different integer would occur and so the two schemes are certainly not isomorphic.

Finally we conclude the paper with two appendices which give some further details on some of the proofs in \cite{me} and \cite{me2}. We had not thought these details necessary but persistent questions from colleagues have led the senior author to believe that it is appropriate to expand upon them.

\section{Lattice Varieties}\label{lv}
Write $e_1,\dots ,e_n$ for the standard $K$-basis of $K^n$ so that it is an $\V$-basis of $F$. If $L$ is any lattice in $K^n,$ then $\bigwedge _{\V}L=p^{d_L}\bigwedge_{\V}F$ for some $d_L\in \mathbb{Z}$. Then $d_L$ is the \emph{discriminant} of $L$ with respect to $F$ and it is equal to $\ell (F/(L\cap F))-\ell (L/(L\cap F)) $ where, for $M$ an $\V$-module of finite length, $\ell (M)$ denotes its length. Special lattices are lattices of discriminant $0$. Then the fundamental existence result is:
\begin{proposition}\cite[\S 3, Proposition 6]{me}
Let $q, m$ and $N$ be three integers such that $q<m$ and $N<n(m-q)$. Then there is a $k$-scheme $\mathbb{L}(q,m;N)$ which is projective and of finite type over $k$ and a flat commutative $\mathbb{L}(q,m;N)$-group scheme $\mathbb{U}(q,m; N)\subseteq p^qF/p^mF\times _k\mathbb{L}(q,m;N)$ which is a universal family of flat module subschemes of $p^qF/p^mF$ of dimension $N$ parametrized by $\mathbb{L}(q,m;N)$. That is, for any $k$-scheme $Y$ and any flat $Y$-group scheme $\mathcal{M}$ whose fibers are $\V$-submodules of $p^qF/p^mF$ of length $N$, there is a unique map $f:Y\rightarrow \mathbb{L}(q,m;N)$ so that $\mathcal{M}\simeq Y\times_{\mathbb{L}(q,m;N)}\mathbb{U}(q,m;N)$.
\end{proposition}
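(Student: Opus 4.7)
The strategy is to construct $\mathbb{L}(q,m;N)$ as a closed subscheme of a Hilbert scheme on a weighted projective variety and then recover $\mathbb{U}(q,m;N)$ as a relative affine cone. Regard $p^qF/p^mF$ as the $n(m-q)$-dimensional affine $k$-scheme whose coordinates are the Witt coordinates in each of the $n$ components; by the Witt product formula, the Teichm\"uller $k^*$-action $\lambda\mapsto \xi(\lambda)\cdot$ gives the level-$i$ coordinate in the $j$-th component the positive weight $p^{i-q}$. Any $\V$-submodule of $p^qF/p^mF$ is $k^*$-stable, because Teichm\"uller representatives lie in $\V$, and so it is the affine cone over a closed subscheme of the weighted projective variety $\pr=(p^qF/p^mF\setminus\{0\})/k^*$. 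The hypothesis $N<n(m-q)$ ensures that this subscheme is a proper projective subscheme of $\pr$.

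Next, I would form the Hilbert scheme $H$ of $\pr$ for the fixed Hilbert function whose affine cone has $k$-length $N$. By Grothendieck's construction, $H$ is projective of finite type over $k$ and carries a flat universal family $\mathcal{Z}\subset \pr\times_k H$. Write $C(\mathcal{Z})\subset (p^qF/p^mF)\times_k H$ for the relative affine cone. The locus in $H$ where $C(\mathcal{Z})$ is closed under the addition map of the commutative group scheme $p^qF/p^mF$ and under multiplication by $p$ is a closed subscheme of $H$, since each condition forces two morphisms from a proper $H$-scheme to agree on the image in $C(\mathcal{Z})$; this defines $\mathbb{L}(q,m;N)$. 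The $\xi(k^*)$-action is automatic from $k^*$-equivariance of $H$, and together with multiplication by $p$ it generates the full $\V$-module structure on the finite truncation $\V/p^{m-q}$.

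It then remains to verify flatness of $\mathbb{U}(q,m;N):=C(\mathcal{Z})|_{\mathbb{L}(q,m;N)}$ and the universal property. Away from the null section $\mathbb{U}(q,m;N)$ is a $\mathbb{G}_m$-bundle over $\mathcal{Z}|_{\mathbb{L}(q,m;N)}$ and is therefore flat; the genuine issue, which I expect to be the main technical obstacle, is flatness along the zero section. One handles this with a graded criterion: the fibers of $C(\mathcal{Z})$ have constant weighted Hilbert function, since they correspond to a flat family of subschemes of $\pr$ with fixed Hilbert polynomial, and a finitely generated graded module over a graded $k$-algebra with constant fiberwise Hilbert function along the base is flat over the base by the standard graded flatness criterion. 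The universal property is then checked in the reverse direction: any flat $Y$-family $\mathcal{M}$ of $\V$-submodules of $p^qF/p^mF$ of length $N$ is automatically $k^*$-stable, so removing the zero section produces a flat family of proper subschemes of $\pr$ over $Y$ classified by a unique map $Y\to H$; the module conditions force this map to factor through $\mathbb{L}(q,m;N)$, and pulling back $\mathbb{U}(q,m;N)$ recovers $\mathcal{M}$.
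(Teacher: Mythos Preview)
Your approach is essentially the one the paper attributes to \cite{me}: the introduction explains that $\mathbb{L}(q,m;N)$ is constructed via the Hilbert scheme of the weighted projective variety $(p^qF/p^mF\setminus\{0\})/k^*$, that the universal family is (birational to) the relative affine cone, and that the nontrivial technical point is precisely flatness along the null section (attributed there to Lemma~8 and Proposition~6 of \cite{me}). So your outline matches the paper's own account of the construction.

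One small simplification you missed: the paper observes that closure under multiplication by $p$ is automatic for any subgroup scheme, since $p\in\mathbb{Z}$ and $p\cdot x$ is just $p$-fold addition. Thus the only genuine closed condition to impose on the Hilbert scheme is closure under the addition morphism; $k^*$-stability is built into the cone construction, and together with $p$ this already forces the full $\V$-module structure. Your imposing $p$-closure separately is redundant rather than wrong.
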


A remark is in order. If a lattice $L$ is in $p^qF$ and it is of codimension $d$, then $L\supseteq p^{q+d}F$. This means that for a given codimension $d$, the scheme $\mathbb{L}(q,m;n(m-q)-d)$ is independent of $m$ so long as $m\geq q+d$. This is exactly the condition that guarantees that every lattice of codimension $d$ in $p^qF$ contains $p^mF$.

By definition, $\Lrn$ is $\mathbb{L}(-r,(n-1)r; n(n-1)r)$,
and its universal family is $\mathbb{U}_r^n$. By means of a suitable power of an $n$-fold product of the verschiebung, $\Lrn$ may be shown to be isomorphic to $\mathbb{L}(0,nr;n(n-1)r)$. In what follows we will view $\Lrn$ as this scheme. This can be thought of as the set of lattices in $F$ of discriminant $nr$. The proalgebraic group $\SL (n,\V )$ operates canonically on $F$ and it preserves the discriminant, hence it operates algebraically on $\Lrn$ \cite[\S 3.4, Lemma~12, Definition~23 and following]{me}. \emph{ Note that in }\cite{me} \emph{ we do not identify the two schemes. There they are written as }$\mathbb{L}_r(F)$ \emph{ and $\Lrn .$}

\section{Bases of Fixed Discriminant}\label{bfd}
The results of this section are auxiliary to those of the next. We construct the schemes and maps which will be used to prove our major results. The first such object is the scheme of bases.

Each lattice of discriminant $nr$ in $F$ contains $p^{nr+1}F$. Hence if $L$ is of discriminant $nr$ in $F$, it is uniquely determined by $L/p^{nr+1}F$. If $\overline{L}$ is a $k^*$-stable subgroup of $F/p^{nr+1}F$ of codimension $nr$, its inverse image in $F$ is a lattice of discriminant $nr$ (see the proof of proposition 6 of \S3.4 of \cite{me}, the proposition quoted above). The quotient $F/p^{nr+1}F$ can be viewed as a free module over the truncated Witt vectors $\W _{nr+1}(k)=\V /p^{nr+1}\V$. If $u$ is a truncated Witt vector, we may write it uniquely as $\sum_{i=0}^{nr}\xi(x_i)^{p^{-i}}\overline{p}^i$ where the bar denotes residue class mod $p^{nr+1}$. Hence if $\boldsymbol{u}_i$ is a vector in $F/p^{nr+1}F$, it has coordinates $\{u_{i,j}:\,j=1,\dots ,n\}$ each of which may be written as  $u_{i,j}=\sum_{s=0}^{nr}\xi(x_{i,j,s})^{p^{-s}}\overline{p}^s$. Hence an ordered set of $n$ elements of $F/p^{nr+1}F$, which we write $\boldsymbol{u}_1, \dots ,\boldsymbol{u}_n$, can be thought of as a point with the $n^2(nr+1)$ coordinates $x_{i,j,s}$, where $\bu _i=(u_{i,1},\dots , u_{i,n})$ and $u_{i,j}=\sum_{s=0}^{nr}\xi(x_{i,j,s})^{p^{-s}}\overline{p}^s$. Thus the space of $n$-tuples of vectors in $F/p^{nr+1}F$ is affine $n^2(nr+1)$-space with the coordinates $x_{i,j,s}$. Write $S$ for this set of indeterminates. In subsequent sections we shall omit the bar on $p$ when it is a residue class.

If $\bu _1, \dots ,\bu _n$ is such an $n$-tuple of vectors, then $\bu _1\wedge \bu _2\wedge \dots \wedge \bu _n=\Delta e_1\wedge \dots \wedge e_n$ where $\Delta$ is the determinant of the matrix with entries $u_{i,j}$. Since $\Delta$ is a polynomial in a set of Witt vectors, we may write: 
\begin{equation}\label{del}
\Delta=\sum_{i=0}^{nr}\xi(\delta_i(S))^{p^{-i}}p^i
\end{equation}
Here each of the $\delta_i$ is a polynomial in the set $S$ of indeterminates $x_{i,j,s}$. To require that the determinant $\Delta$ be of $p$-adic valuation $nr$ is exactly to require that $\delta_i=0,\,i<nr,\; \delta_{nr}\neq 0$. Hence the set of ordered $n$-tuples of vectors $\{\bu _1, \dots ,\bu _n\}$, whose exterior product $\bu _1\wedge \bu _2\wedge \dots \wedge \bu _n$ is of valuation $nr$, is the spectrum of the ring $k[S]_{\delta_{nr}}/I$ where $I$ is the ideal generated by $\delta_0, \dots ,\delta_{nr-1}$. That is, it is an affine subscheme of a closed subscheme of the affine $n^2(nr+1)$-space of codimension at most $nr$. In particular, if it can be shown that it is a variety of dimension $n^2(nr+1)-nr$, it is a complete intersection scheme. It suffices that it be shown to be irreducible and generically reduced.

\begin{definition}\label{base}
The scheme of bases in $F$ of discriminant $nr$ is the scheme $\Spec (k[S]_{\delta'_{nr}}/I)$. We will write it $\mathcal{B}_{nr}(F)$.
\end{definition}

This is a functorially defined scheme. That is, by defining it by means of the particular equations we have specified for it we may say that for any $k$-scheme, $Z,$ the functor of points, $\Hom_k(Z, \mathcal{B}_{nr}(F)),$ can be viewed as the set of equivalence classes modulo $p^{rn+1}F$ of ordered sets of $n$ sections of $F\times_kZ$ which are globally linearly independent over the ring of global Witt vectors over $Z$ and which generate a submodule with the specified discriminant. On a more explicit level it is a scheme of matrices over $\V/p^{nr+1}\V.$ If $\{ u_1,\dots ,u_n\}$ is any base of discriminant $nr$ we may think of it as a matrix in $\mathbb{M}_n(\V/p^{nr+1}\V)$with columns $u_1, \dots ,u_n.$ It will be a matrix with determinant a unit multiple of $p^{nr}.$ Conversely given any such matrix, its columns are a base of discriminant $nr.$

\begin{lemma}\label{irr} The scheme of bases of discriminant $nr$ is irreducible.
\end{lemma}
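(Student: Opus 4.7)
The plan is to prove irreducibility by projecting onto the known irreducible variety $\Lrn$ and analyzing the fibers. First I would introduce the natural morphism $\pi\colon\mathcal{B}_{nr}(F)\to\Lrn$ sending an $n$-tuple $(\bu_1,\ldots,\bu_n)$ to the submodule of $F/p^{nr+1}F$ it generates, equivalently to the unique lattice $L\subset F$ of discriminant $nr$ with $L/p^{nr+1}F$ equal to that submodule. Surjectivity is immediate: any lattice $L$ of discriminant $nr$ admits a $\V$-basis whose reduction modulo $p^{nr+1}$ is a point of $\mathcal{B}_{nr}(F)$ lying over $L$.

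Next I would analyze the fibers. Since two $\V$-bases of $L$ differ by right multiplication by an element of $\GL_n(\V)$, the fiber $\pi^{-1}(L)$ is a single orbit of $\GL_n(\W_{nr+1}(k))$ acting by right multiplication. Because $\GL_n(\W_{nr+1}(k))$ is an open subscheme of $\mathbb{A}^{n^2(nr+1)}_k$, hence smooth and irreducible over $k$, every fiber is irreducible. Using the elementary divisor decomposition $L=\bigoplus_i p^{a_i}\V f_i$ with $\sum_i a_i=nr$, the stabilizer in $\GL_n(\W_{nr+1}(k))$ of the truncated basis $(p^{a_i}\bar{f}_i)$ turns out to consist of matrices $I+(\alpha_{jk})$ with $\alpha_{jk}\in p^{nr+1-a_j}\W_{nr+1}(k)$, of dimension $\sum_{j,k}a_j = n\sum_j a_j = n^2 r$, independent of the partition. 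Hence every fiber is irreducible of the common dimension $d:=n^2(nr+1)-n^2 r$.

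Finally I would conclude by a dimension argument. Let $Z_0\subset\mathcal{B}_{nr}(F)$ be the closure of the generic fiber $\pi^{-1}(\eta)$; by the fiber dimension theorem $Z_0$ is irreducible of dimension $\dim\Lrn+d = n^2(nr+1)-nr$. For any other irreducible component $Z'\neq Z_0$, Krull's principal ideal theorem applied to the $nr$ defining polynomials $\delta_0,\ldots,\delta_{nr-1}$ gives $\dim Z'\ge n^2(nr+1)-nr$. If $Z'$ fails to dominate $\Lrn$, then $\dim Z'\le (\dim\Lrn-1)+d < n^2(nr+1)-nr$, a contradiction. If $Z'$ does dominate, the fiber dimension theorem yields $\dim Z' = \dim\Lrn + \dim(Z'\cap\pi^{-1}(\eta))$; but $\pi^{-1}(\eta)$ is irreducible and contained in $Z_0\neq Z'$, so $Z'\cap\pi^{-1}(\eta)$ is a proper closed subset of dimension strictly less than $d$, again a contradiction. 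Hence $\mathcal{B}_{nr}(F)=Z_0$ is irreducible.

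The main obstacle will be the stabilizer dimension computation and the verification that its dimension $n^2 r$ is independent of the elementary divisor partition $(a_i)$, since $L/p^{nr+1}F$ is not a free $\W_{nr+1}(k)$-module when $r>0$ and some care is required to avoid spurious dependence on the $a_i$.
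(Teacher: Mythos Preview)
Your stabilizer computation---the part you flag as the main obstacle---is actually fine: writing $\bar U_0=\operatorname{diag}(p^{a_1},\ldots,p^{a_n})$, the condition $\bar U_0(g-I)=0$ in $M_n(\W_{nr+1})$ forces $(g-I)_{jk}\in p^{\,nr+1-a_j}\W_{nr+1}$, and since every $a_j\le nr$ this already gives $g\equiv I\pmod p$, so invertibility is automatic and the dimension is $\sum_{j,k}a_j=n\sum_j a_j=n^2r$, independent of the partition.

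The genuine gap is earlier and is one of logical circularity. You assume the existence of a morphism $\pi:\mathcal{B}_{nr}(F)\to\Lrn$, but in the paper this morphism is exactly the generating map $\beta$ of Definition~\ref{gen}, constructed \emph{after} the present lemma. By the universal property of $\Lrn$, giving such a morphism is the same as giving a flat family of lattices over $\mathcal{B}_{nr}(F)$; the natural candidate is the spanning family $\mathbb U(\mathcal B)$, and its flatness (indeed smoothness) is Lemma~\ref{flg}, whose proof explicitly invokes that $\mathcal{B}_{nr}(F)$ is \emph{integral}. Without $\pi$ as an honest morphism of schemes there is no generic point $\eta$ to pull back, no $Z_0=\overline{\pi^{-1}(\eta)}$, and the component-by-component dimension argument cannot begin. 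To rescue your approach you would need an independent proof that the span of the tautological $n$-tuple is flat over the a priori possibly non-reduced, possibly reducible scheme $\mathcal{B}_{nr}(F)$, and you have not supplied one.

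The paper's proof sidesteps $\Lrn$ entirely. It uses the two-sided action of $G\times G$, $G=\SL(n,\V/p^{nr+1}\V)$, to reduce every base to a diagonal one $(p^{r_1}e_1,\ldots,p^{r_n}e_n)$, and then writes down an explicit $\mathbb A^1_t$-family showing that the orbit of $(p^{r_1}e_1,\ldots,p^{r_n}e_n)$ lies in the closure of the orbit of the base obtained by replacing $(r_i,r_j)$ by $(r_i+1,r_j-1)$ whenever $r_i>r_j$. Iterating, every point lies in the closure of the single orbit of $(p^{nr}e_1,e_2,\ldots,e_n)$, so that orbit is dense and irreducibility follows. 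This is elementary and self-contained; your route, if the circularity could be broken, would trade the explicit degeneration for a cleaner dimension bookkeeping, but as written it leans on exactly the machinery this lemma is meant to underpin.
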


\begin{proof}Let $G=\SL(n, \V/p^{nr+1}\V).$ Then $G\times_kG$ operates on the scheme of bases of discriminant $nr.$ If $U$ is a base of discriminant $nr$ and $(\sigma, \tau)\in G\times G,$ then $\sigma U \tau^{-1}$ is again a base of discriminant $nr.$ As a consequence of the Iwahori decomposition every base is in the orbit of a base of the form $p^{r_1}e_1, \dots , p^{r_n}e_n$ where the $r_i$ may be taken in nonincreasing order. If $r_i>r_j$ we construct a path of bases parametrized by the affine line $\Spec (k[t])$ which is in the orbit of $p^{r_1}e_1,\dots ,p^{r_i+1}e_i, \dots ,p^{r_j-1}e_j,\dots p^{r_n}e_n$ for $t$ invertible and which is just the given base for $t=0.$

Just use the base which is the same as the given one for $q\neq j$ but in which $p^{r_j}e_j$ is replaced by $u_j=p^{r_j}e_j+\xi(t)p^{r_i-r_j+1}e_i.$ For $t=0$ one clearly obtains the given base. When $t$ is invertible leave all but the $i$'th and $j$'th columns alone. Notice that the vectors $p^{r_i}e_i$ and $u_j$ can be replaced by $\xi(t)(p^{r_i}e_i-\xi(t)^{-1}p^{r_i-r_j+1})$ and $\xi(t)^{-1}u_j.$ Thus the $i$'th vector is now $p^{r_i+1}e_j$ while the $i$'th entry in $\xi(t)^{-1}u_j$ is $p^{r_j-1}.$ After multiplying by appropriate permutations this means that the base $\{ p^{r_1}e_1,\dots ,p^{r_i}e_i, \dots ,p^{r_j}e_j,\dots p^{r_n}e_n\}$ is in the closure of the orbit of the base $\{ p^{r_1}e_1,\dots ,p^{r_i+1}e_i, \dots ,p^{r_j-1}e_j,\dots p^{r_n}e_n\}.$ By a sequence of such paths one sees that every base is in the closure of the orbit of the base $\{ p^{nr}e_1,e_2,  \dots , e_n\}.$ Hence that orbit is dense. Irreducibility follows. 
\end{proof}

\begin{remark} If one crudely uses the family of bases described above to span a family of lattices over the family of bases one finds that the fiber at $0$ is not reduced. The path described above can be thought of as a path in a Cartesian power universal family of lattices. As such it has a vertical tangent at $t=0$ and so in the lattice variety it gives rise to a singular path and a nonflat family. To obtain the proper deformation of lattices with a reduced fiber at $0$ one must take the total space of this deformation and replace it by its normalization. Alternatively one may construct explicit deformations that show that any lattice of co-rank $nr$ can be obtained from the maximal lattice by a sequence of flat deformations. We will give such an explicit construction in an appendix. 
\end{remark}  

Suppose that $\{ \bu_1, \dots ,\bu_n \}$ is a base of discriminant $nr$ and that $\boldsymbol{v}$ is another vector in $\V^n.$ Let $L$ be the lattice spanned by the $\bu_i.$ Then it is an exercise in the theory of elementary divisors to prove that $\boldsymbol{v}\in L$ if and only if $\V \boldsymbol{v}+L$ is also of discriminant $nr.$ Write $L'=\V\boldsymbol{v}+L.$ To say that $L'$ is of discriminant $nr$ is to say that $\bigwedge^n_{\V}L'=p^{nr}\bigwedge^n_{\V}{F}.$ 

Let $\Delta_j$ be the determinant all of whose columns but the $j$'th are the $\bu_i$ and whose $j$'th column is $\boldsymbol{v}.$
Now $\bigwedge^n_{\V}L'$ is generated by $\bu_1\wedge \dots \wedge \bu_n$ and the $n$ products, $\bu_1\wedge \dots \wedge \bu_{j-1}\wedge \boldsymbol{v}\wedge \bu_{j+1} \dots \wedge \bu_n.$ Thus the discriminant of $L'$ is the colength of the ideal generated by $\Delta$ and the $n$ determinants $\Delta_j.$  Write $\boldsymbol{v}=(v_1, \dots ,v_n)$ and $v_j=\sum_{s=0}^{nr}\xi(v_{i,s})^{p^{-s}}p^s.$ Write $T$ for the set of indeterminants: $$S\bigcup \{v_{i,s}: 1\leq i\leq n;\; 0\leq s\leq nr\}.$$ Reasoning as in the paragraph preceding \eqref{del}, we may write:
\begin{equation}\label{mdel}
\Delta_j =\sum_{i=0}^{nr}\xi(\eta_{j,i}(T))^{p^{-i}}p^i
\end{equation}

\begin{definition}The \emph{spanning family} over $\mathcal{B}_{nr}(F)$ is the closed subscheme of $\Spec(k[T]_{\delta_{nr}})$ defined by the polynomials $\delta_j,\;0\leq j\leq nr-1$ and $\eta_{j,i}\quad j=1,\dots ,n,\;0\leq i\leq nr-1$ with its induced reduced structure. This scheme will be written $\mathbb{U}(\mathcal{B}).$
\end{definition}

There is a natural homomorphism from the coordinate ring of $\mathcal{B}_{nr}(F)$ to the coordinate ring of $\mathbb{U}(\mathcal{B}).$ Just note that $k[S]_{\delta_{nr}}\subseteq k[T]_{\delta_{nr}}$ and that the ideal defining $\mathcal{B}_{nr}(F)$ is a subset of the set of polynomials defining $\mathbb{U}(\mathcal{B}).$ The associated morphism of schemes will be written $f:\mathbb{U}(\mathcal{B})\rightarrow \mathcal{B}_{nr}(F).$

\begin{lemma}\label{gpt} The geometric points of $\mathbb{U}(\mathcal{B})$ are the $(n+1)$-tuples \linebreak $(\boldsymbol{v}, \bu_1, \dots ,\bu_n )$ where $\boldsymbol{v}$ and the $\bu_i$ are $n$-vectors in $\overline{\V}$ such that \linebreak $( \bu_1, \dots ,\bu_n )$ is a base of discriminant $nr$ and $\boldsymbol{v}$ is in the $\V$-span of the $\bu_i.$ The map $f$ sends the geometric point $(\boldsymbol{v}, \bu_1, \dots ,\bu_n )$ to the base $(\bu_1, \dots ,\bu_n ).$
\end{lemma}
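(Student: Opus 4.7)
The plan is to unfold the two families of defining equations of $\mathbb{U}(\mathcal{B})$ and verify that each of them exactly captures one of the two geometric conditions in the conclusion.

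\emph{Coordinatization.} A $k$-valued (hence geometric) point of $\Spec(k[T]_{\delta_{nr}})$ is a choice of $k$-values for every $x_{i,j,s}$ and $v_{i,s}$. Via the Witt expansion $u_{i,j}=\sum_{s=0}^{nr}\xi(x_{i,j,s})^{p^{-s}}\overline{p}^s$ and the analogous expansion for $v_j$, this is precisely the datum of an $(n+1)$-tuple $(\boldsymbol{v},\bu_1,\dots,\bu_n)$ of vectors in $(\V/p^{nr+1}\V)^n$, which I will write as vectors in $\overline{\V}^n$. Since any lattice of discriminant $nr$ in $F$ automatically contains $p^{nr+1}F$, I may lift $\boldsymbol{v}$ and the $\bu_i$ arbitrarily to $F=\V^n$: the $p$-adic valuations $\le nr$ of determinants formed from such lifts depend only on classes modulo $p^{nr+1}$, so all statements below are well-defined.

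\emph{Translating the two groups of equations.} By \eqref{del}, the conditions $\delta_0=\dots=\delta_{nr-1}=0$ together with the invertibility of $\delta_{nr}$ say precisely that $\Delta=\det(\bu_1,\dots,\bu_n)$ has $p$-adic valuation exactly $nr$, i.e.\ that $(\bu_1,\dots,\bu_n)$ is a base of discriminant $nr$ in $F$. Similarly \eqref{mdel} says that $\eta_{j,0}=\dots=\eta_{j,nr-1}=0$ is equivalent to $\Delta_j\in p^{nr}\V$ for each $j=1,\dots,n$. Now the exterior-power calculation already recalled in the paragraph preceding the definition of $\mathbb{U}(\mathcal{B})$ identifies $\bigwedge_\V^n (L+\V\boldsymbol{v})$, as a submodule of $\bigwedge_\V^n F\simeq \V$, with the principal ideal generated by $\Delta$ and $\pm\Delta_1,\dots,\pm\Delta_n$. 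Hence the conjunction of the two groups of equations is exactly the statement that this ideal equals $(p^{nr})=\bigwedge_\V^n L$. Since $L\subseteq L+\V\boldsymbol{v}$ are both free $\V$-modules of rank $n$, equality of top exterior powers forces $L+\V\boldsymbol{v}=L$, which is to say $\boldsymbol{v}\in L$. The converse, that $\boldsymbol{v}\in L$ implies every $\Delta_j$ is divisible by $p^{nr}=\Delta\cdot\text{unit}$, is Cramer's rule.

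\emph{The map $f$.} By construction $f$ is induced by the inclusion $k[S]_{\delta_{nr}}\hookrightarrow k[T]_{\delta_{nr}}$ which forgets the variables $v_{i,s}$. Under the dictionary above this is the projection sending $(\boldsymbol{v},\bu_1,\dots,\bu_n)$ to $(\bu_1,\dots,\bu_n)$, which is the asserted description of $f$ on geometric points.

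\emph{Where the work sits.} I anticipate no genuine obstacle: the lemma is fundamentally a translation between algebraic equations in Witt coordinates and the geometric conditions on $(\boldsymbol{v},\bu_1,\dots,\bu_n)$. The two ingredients that drive the argument are (i) the fact that a Witt vector lies in $p^N\V/p^{nr+1}\V$ iff its first $N$ Witt coefficients vanish, and (ii) the criterion that two free $\V$-modules of the same rank with one contained in the other coincide iff their top exterior powers coincide. The only point that requires minor care is that the "induced reduced structure" appearing in the definition of $\mathbb{U}(\mathcal{B})$ does not alter the underlying set of geometric points, so the description applies as stated.
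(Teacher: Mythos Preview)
Your argument is correct and complete. The only point of difference from the paper's proof is in the nontrivial implication (equations hold $\Rightarrow$ $\boldsymbol{v}\in L$): you invoke the exterior-power criterion that the paper set up in the paragraph preceding the definition of $\mathbb{U}(\mathcal{B})$ (equality of top exterior powers of nested free rank-$n$ $\V$-modules forces equality), whereas the paper instead lifts to $\V^n$ and applies Cramer's rule directly to produce explicit coefficients $x_i^*=\Delta_i^*/\Delta^*\in\V$ expressing $\boldsymbol{v}^*$ in the basis $\bu_i^*$. Your route is slightly cleaner, since it uses exactly the lemma the paper had just announced; the paper's route is more constructive and motivates the subsequent remark that this division by $\Delta^*$ involves $p^{nr}$'th roots and hence does not yield a polynomial map.
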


\begin{proof}   
Then the $n+1$-tuple $(\boldsymbol{v}, \bu_1, \dots ,\bu_n )$ will consist of a base of discriminant $nr$ and a vector $\boldsymbol{v}$ such that $\boldsymbol{v}$ is in the lattice spanned by the $\bu_i$ if and only if $\delta_{nr}$ is invertible and $\delta_j,\;0\leq j \leq nr-1,$ and the polynomials $\eta_{j,i},\; j=1,\dots ,n,\;0\leq i\leq nr-1$ are all $0.$ One implication is clear. To see that the vanishing of the $n(nr+1)$ polynomials and the invertibility of $\delta_{nr}$ imply that the point in question defines a base and a vector in the span of that base, just lift each of the vectors $\bu_i$ and $\boldsymbol{v}$ to vectors in $\V^n.$ Denote the corresponding vectors $(\boldsymbol{v}^*,\bu_1^*, \dots, \bu_n^*).$ Call the determinants of the matrices with these vectors as columns  like those above in the discussion preceding (3.2)  $\Delta^*$ and $\Delta_j^*.$ These are quantities  in $\V$ congruent modulo $p^{nr+1}$ to $\Delta$ and $\Delta_j\;j=1,\dots, n.$  Then Cramers rule states that the solution to the matrix equation  $(\bu_1^*,\dots, \bu_n^*)\left(\begin{smallmatrix}x_1^*&\\ \vdots&\\ x_n^*\end{smallmatrix}\right)=\boldsymbol{v}^*$ is given by $x_i^*=\Delta^*_i(\Delta^*)^{-1}.$ Now the polynomial equations given and the condition $\delta_{nr}$ is invertible together imply that the value of $\Delta^*$ is exactly $nr$ while the values of the $\Delta^*_i$ are all greater than or equal to $nr.$ Hence the $x_i^*$ are all in $\V$ and so their residue classes which we write $x_i$ satisfy $\boldsymbol{v}=\sum_1^nx_i\bu_i.$ The assertion concerning $f$ is clear. 
\end{proof}

\begin{remark}We warn the reader that this process does not produce a polynomial map. To divide by $(\Delta^*)$ is to divide by a unit multiple of $p^{nr}.$ Inverting a unit in $\V^*$ results in a Witt vector whose entries are polynomials in its coordinates and the inverse of its $0$-component. (This is a somewhat nontrivial exercise.) Inverting $p^{nr}$ however involves taking $p^{nr}$'th roots of the coordinates of the Witt vector in question. Since $k$ is perfect the solution described above is fine for geometric points but if one wishes to solve the problem for $R$-points the solution $\boldsymbol{v}$ will be an $R^{p^{-nr}}$-point.
\end{remark}

We comment here on what we are proving in this section. We establish two main results. The first is a proof, rather direct and explicit, that $\mathcal{B}_{nr}(F)$ is a complete intersection. Then we must show that the map which assigns to each base of discriminant $nr$ the lattice it generates is a faithfully flat morphism of schemes. In this section we show that it is a smooth morphism of schemes. Since there is no explicit description of the scheme $\Lrn,$ this can only be done by considering the functor which it represents. Maps to $\Lrn$ correspond to flat families of lattices and so showing the existence of a map is the same as constructing a flat family. The flat family over $\mathcal{B}_{nr}$ in this instance is the spanning family $\mathbb{U}(\mathcal{B}).$ 

\begin{lemma}\label{rv} The scheme $\mathcal{B}_{nr}(F)$ is generically rational of dimension $(n^2-1)(nr+1)+1=n^2(nr+1)-nr.$ That is it contains a dense open rational subset. \end{lemma}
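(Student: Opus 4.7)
The plan is to exhibit an explicit dense open subscheme $V\subset\mathcal{B}_{nr}(F)$ which is isomorphic to an open subset of affine space of dimension $(n^2-1)(nr+1)+1$. Since $\mathcal{B}_{nr}(F)$ is irreducible by Lemma \ref{irr}, this will determine its dimension and prove generic rationality in one stroke.

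I will take $V$ to be the locus where the lower-right $(n-1)\times(n-1)$ minor $A:=(u_{i,j})_{2\le i,j\le n}$ is invertible over $\V/p^{nr+1}\V$, i.e., where the $0$-Witt-component of $\det A$ is nonzero. This condition is open, and the base $\{p^{nr}e_1,e_2,\dots,e_n\}$ from the proof of Lemma \ref{irr} lies in $V$, so $V$ is nonempty and hence dense. Writing each point of $V$ in block form
\[
U=\begin{pmatrix} u_{1,1} & \boldsymbol{\alpha}^T\\ \boldsymbol{\beta} & A\end{pmatrix},
\]
the Schur complement identity yields $\det U=\det A\cdot\bigl(u_{1,1}-\boldsymbol{\alpha}^T A^{-1}\boldsymbol{\beta}\bigr)$. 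Because $\det A$ is a unit on $V$, the system $\delta_0=\dots=\delta_{nr-1}=0$ together with the open condition $\delta_{nr}\neq0$ collapses to the single constraint
\[
u_{1,1}-\boldsymbol{\alpha}^T A^{-1}\boldsymbol{\beta}=\xi(c)\,p^{nr}\quad\text{for some }c\in k^*,
\]
since multiplication by $p^{nr}$ annihilates every Witt-component except the $0$-th.

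To finish, I will project $V$ onto the coordinates given by the entries of $A$, of $\boldsymbol{\alpha}$, and of $\boldsymbol{\beta}$, together with the $nr$-th Witt-component $x_{nr}$ of $u_{1,1}$. The inverse $A^{-1}$ is regular on the locus $(\det A)_0\neq0$, and Witt-vector subtraction is triangular in the sense that each component equals $x_i-y_i$ plus a polynomial in strictly earlier components; so the equations $\delta_0=\dots=\delta_{nr-1}=0$ recursively determine the remaining Witt-components of $u_{1,1}$ as regular functions of the other data, while $\delta_{nr}\ne 0$ becomes an open condition on $x_{nr}$. This identifies $V$ with an open subset of affine space of dimension
\[
(n-1)^2(nr+1)+2(n-1)(nr+1)+1=(n^2-1)(nr+1)+1=n^2(nr+1)-nr,
\]
which proves the lemma. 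The only real obstacle is the bookkeeping for the Witt-vector arithmetic, but its polynomial triangular structure makes this unproblematic.
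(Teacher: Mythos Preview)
Your proof is correct and follows essentially the same approach as the paper: both restrict to the open set where the lower-right $(n-1)\times(n-1)$ minor has unit determinant, and both use the (equivalent) Schur complement / cofactor expansion to solve for $u_{1,1}$ in terms of the remaining $n^2-1$ entries plus a single extra parameter in $k^*$. The only cosmetic difference is that the paper takes the extra parameter to be $t$ with $\det U=\xi(t)^{p^{-nr}}p^{nr}$, whereas you take it to be the $nr$-th Witt component $x_{nr}$ of $u_{1,1}$; these give the same open chart.
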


\begin{proof}Recall Theorem 6, p. 113 of \cite{me}. A lattice $L\subseteq F$ is a smooth point of $\Lrn$ if it can be written in the form $L'\oplus \V p^{nr}v$  where $L'$ is a rank $n-1$ direct summand of $F$ and $F=\V v\oplus L'$. Clearly, $\{\bu _1, \dots ,\bu _n\}$ is a basis for such a lattice if and only if some $(n-1)\times (n-1)$ minor of the matrix with the $\bu_i$ as columns has unit determinant. For simplicity we consider those bases for which the lower right $(n-1)\times (n-1)$ minor is a unit, whence the last $n-1$ columns span a direct summand.

Write the matrix $U=(\bu_1,\dots ,\bu_n)$ in the form:
\begin{equation*}
\left(
\begin{matrix}
u_{1,1}  & U_{1,2}\\
U_{2,1}  & U_{2,2}
\end{matrix}
\right).
\end{equation*}
In this expression, $u_{1,1}\in \V/p^{nr+1}\V$, $U_{1,2}$ is a row of length $n-1$ with entries in the same ring and the lower entries are matrices of the obvious dimensions. Then $\gamma =\operatorname{det}(U_{2,2})$ is a function on $\mathcal{B}_{nr}(F)$ with value in $\V/p^{nr+1}\V$ and so we may write:
\[
\gamma =\sum_{i=0}^{nr}\xi(\gamma_i)^{p^{-i}}p^i.
\]
In this expression, the $\gamma_i$ are global functions on the scheme $\mathcal{B}_{nr}(F)$. Then $D(\gamma_0)$, the principal open set defined by $\gamma_0$, is exactly the set on which $\operatorname{det}U_{2,2}$ is invertible in $ \V/p^{nr+1}\V$.  The determinant of the full basis matrix $U=(\bu_1, \dots, \bu_n)$ as an element of $\V/p^{nr+1}\V$ is of the form $\xi(t)^{p^{-nr}}p^{nr} \in \V/p^{nr+1}\V$. Write $d_{i,j}$ for the determinant of the minor of $U$ obtained by deleting the $i$'th column and the $j$'th row. Note that $d_{1,1}=\gamma$. Then $\sum_{j=1}^n (-1)^{1+j}u_{1,j}d_{1,j}=\xi(t)^{p^{-nr}}p^{nr}$. Since $d_{1,1}=\gamma$, we may write:
\begin{equation}\label{E:par}
u_{1,1}=\gamma^{-1}(\xi(t)^{p^{-nr}}p^{nr}- \sum_{j=2}^nu_{1,j}(-1)^{1+j}d_{1,j}).
\end{equation}
On $D(\gamma_0)$ this expression can be realized as a truncated Witt vector whose Witt components are polynomials in regular functions on $D(\gamma_0)$. This is true of the points of $\mathcal{B}_{nr}(F)$ in the affine scheme $\Spec (R)$ for any $k$-algebra $R$.

Thus for any such $R$ and $(i,j)\neq (1,1)$, we may let the $u_{i,j}$ vary over the $R$-points of a dense open subspace of $ (\V/p^{nr+1}\V)^{n^2-1}$ and choose $t\in R^*$ arbitrarily and take $u_{1,1}$ given by formula \eqref{E:par}. That is, we have parametrized the $R$-points of $D(\gamma_0)$ as the $R$-points of a dense open subset of $(\V/p^{nr+1}\V)^{n^2-1}\times k^*$. That is, for each $k$-algebra $R$, the $R$-points of $D(\gamma_0)$ are functorially equal to the $R$-points of a dense open subset of $(\V/p^{nr+1}\V)^{n^2-1}\times k^*$. It is a fact well known and extensively applied in the theory of group schemes (the discussion in \cite{rag} and \cite{dg} is based on it) that, if the $R$-points of a $k$-scheme $X$ are functorially isomorphic to the $R$-points of a $k$-scheme $Y$ for each $k$-algebra $R$, then $X$ and $Y$ are isomorphic.  Hence $D(\gamma_0)$ is isomorphic to a dense open subset of an affine space of dimension $(n^2-1)(nr+1)+1=n^2(nr+1)-nr$, and so it is rational and hence integral as well.
\end{proof}

\begin{proposition}\label{cin}The scheme $\mathcal{B}_{nr}(F)$ is an affine complete intersection.
\end{proposition}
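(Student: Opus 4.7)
The plan is a straightforward dimension count. By construction, $\mathcal{B}_{nr}(F)$ sits inside the principal open subset $\Spec(k[S]_{\delta_{nr}})$ of affine $n^2(nr+1)$-space as the closed subscheme cut out by the $nr$ polynomials $\delta_0, \dots, \delta_{nr-1}$. First I would invoke Krull's Hauptidealsatz to conclude that every irreducible component of $\mathcal{B}_{nr}(F)$ has codimension at most $nr$ in this ambient open, and hence dimension at least $n^2(nr+1) - nr$.

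For the matching upper bound, I would combine the two preceding lemmas. By Lemma~\ref{irr} the scheme is irreducible, so it has a single well-defined dimension; by Lemma~\ref{rv} it contains the dense open $D(\gamma_0)$, which is isomorphic to an open subset of an affine space of dimension $n^2(nr+1) - nr$. Since a dense open of an irreducible scheme carries the same dimension as the scheme itself, this pins the dimension from above and yields $\dim \mathcal{B}_{nr}(F) = n^2(nr+1) - nr$.

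The conclusion is then immediate: the codimension of $\mathcal{B}_{nr}(F)$ in its ambient open subset of affine space equals the number of generators of its defining ideal, which is precisely the definition of an affine complete intersection. All the substantive work has already been done in Lemmas~\ref{irr} and~\ref{rv}; the proposition itself is essentially a bookkeeping step, so I do not anticipate any real obstacle beyond correctly identifying the ambient dimension of the localization (which is unchanged by inverting the single element $\delta_{nr}$).
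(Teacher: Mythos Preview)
Your proposal is correct and follows essentially the same route as the paper: invoke Lemmas~\ref{irr} and~\ref{rv} to pin down the dimension as $n^2(nr+1)-nr$, then observe that the scheme is cut out by exactly $nr$ equations in a principal open of affine $n^2(nr+1)$-space. The paper's own proof is a terser version of the same dimension count, with the Hauptidealsatz step left implicit.
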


\begin{proof}By Lemma \ref{rv} and lemma \ref{irr}, $\mathcal{B}_{nr}(F)$ is irreducible, generically reduced and of dimension $n^2(nr+1)-nr.$ On the other hand it is defined by the $nr$ equations $\delta_0, \dots , \delta_{nr-1}$ in the principal open subset of affine space $\Spec(k[S]_{\delta_{nr}}.$ The set of indeterminates $S$ consists of $n^2(nr+1)$ indeterminates and so $\Spec(k[S])$ is affine $n^2(nr+1)$ space. The result is immediate.
\end{proof}

We now construct a map from $\mathcal{B}_{nr}(F)$ to $\Lrn.$ That is we construct a flat family. It is the map which sends a base to the lattice it generates. Hence we must construct a flat family parametrized by $\mathcal{B}_{nr}(F)$ whose fiber at a base is the lattice spanned by that base.

Write $\mathcal{O}_{nr+1}^n$ for the $n$-fold product of $\mathcal{O}/p^{nr+1}\mathcal{O}$  with itself over $\Spec{k}$. Then there is a natural map of schemes
\[
\alpha:\mathcal{O}_{nr+1}^n\times_k\mathcal{B}_{nr}(F) \to F/p^{nr+1}F\times_k\mathcal{B}_{nr}(F).
\]
For a $k$-scheme $T$ let $a_i$ be in $\Hom_k(T,\V_{nr+1})$ and let 
$(\bu _1, \dots ,\bu _n)$ be in $\Hom_k(T, \mathcal{B}_{nr}(F)).$ There is a natural map which sends $(a_1,\dots , a_n),\; a_i\in \Hom_k(T,\V_{nr+1}),\; i=1,\dots ,n$   to the point  $(\sum_{i=1}^na_i\bu _i,\bu _1,\dots ,\bu_n)$ lying in $\Hom_k(T,F/p^{nr+1}F\times_k\mathcal{B}_{nr}(F))$. This is an additive map of $T$ valued points of $\mathcal{B}_{nr}(F)$-group schemes. It is functorial in $T$ and so it corresponds, under Yoneda's lemma to a map of $\mathcal{B}_{nr}(F)$-group schemes. 

\begin{lemma} The image of $\alpha$ is the scheme $\mathbb{U}(\mathcal{B}).$\end{lemma}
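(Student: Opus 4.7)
The plan is to verify two assertions: (1) $\alpha$ factors through the closed subscheme $\mathbb{U}(\mathcal{B}) \subseteq F/p^{nr+1}F \times_k \mathcal{B}_{nr}(F)$, and (2) this factorization is surjective. Once both are checked, equality of the (reduced) image of $\alpha$ with $\mathbb{U}(\mathcal{B})$ is immediate.

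For the factorization, I would pull back the defining polynomials of $\mathbb{U}(\mathcal{B})$ to the source. The polynomials $\delta_0,\dots,\delta_{nr-1}$ vanish automatically because the factor $\mathcal{B}_{nr}(F)$ is defined by exactly their vanishing. For the polynomials $\eta_{j,i}$ with $0 \le i < nr$, substitute $\boldsymbol{v} = \sum_i a_i \bu_i$ into the determinant $\Delta_j$ (which has $\boldsymbol{v}$ in its $j$-th column) and expand by multilinearity in that column. Every term with $i \neq j$ yields a matrix with two equal columns and so vanishes, leaving
\begin{equation*}
\Delta_j \;=\; a_j \cdot \Delta.
\end{equation*}
Since $\delta_0,\dots,\delta_{nr-1}$ vanish on $\mathcal{B}_{nr}(F)$, the Witt vector $\Delta$ lies in $p^{nr}\V/p^{nr+1}\V$, and hence so does $a_j\Delta$. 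Projecting to $\V/p^{nr}\V$ then forces every Witt component of $\Delta_j$ of index below $nr$ to be zero, i.e.\ $\eta_{j,i}=0$ on the source for $i<nr$. Because the source is reduced ($\V_{nr+1}^n$ is affine space, and $\mathcal{B}_{nr}(F)$ is a generically rational complete intersection by Lemma~\ref{rv} and Proposition~\ref{cin}, hence Cohen--Macaulay and reduced), $\alpha$ factors through the reduced subscheme $\mathbb{U}(\mathcal{B})$.

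For surjectivity, Lemma~\ref{gpt} identifies the geometric points of $\mathbb{U}(\mathcal{B})$ as tuples $(\boldsymbol{v},\bu_1,\dots,\bu_n)$ with $\boldsymbol{v}\in \V\bu_1+\cdots+\V\bu_n$. Choosing $a_i\in\V$ with $\boldsymbol{v}=\sum a_i\bu_i$ and reducing modulo $p^{nr+1}$ produces a preimage in $\V_{nr+1}^n\times_k\mathcal{B}_{nr}(F)$. Thus $\alpha$ hits every closed point of $\mathbb{U}(\mathcal{B})$; since its image is constructible by Chevalley's theorem and a constructible subset of a finite-type $k$-scheme that contains all the closed points must be the whole space, surjectivity follows. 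The main technical point is the multilinearity identity $\Delta_j = a_j\Delta$ together with the truncated Witt vector bookkeeping that turns the containment $a_j\Delta \in p^{nr}\V/p^{nr+1}\V$ into the desired vanishing of the polynomials $\eta_{j,i}$ for $i<nr$; once this is carefully set up, the remainder of the argument is purely formal.
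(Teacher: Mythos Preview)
Your proposal is correct and follows essentially the same route as the paper: factor $\alpha$ through $\mathbb{U}(\mathcal{B})$, check surjectivity on geometric points via Lemma~\ref{gpt}, and use reducedness of the source to conclude. The only notable difference is that you verify the factorization algebraically by the Cramer-type identity $\Delta_j = a_j\Delta$ and then read off the vanishing of the Witt components $\eta_{j,i}$ for $i<nr$, whereas the paper simply observes that $(\sum_i a_i\bu_i,\bu_1,\dots,\bu_n)$ is ``evidently'' a point of $\mathbb{U}(\mathcal{B})$ by the geometric-point description in Lemma~\ref{gpt}; your version makes the scheme-theoretic factorization more explicit, but the substance is the same.
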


\begin{proof} Consider the point set image of closed points in $\V_{nr+1}^n\times _k\mathcal{B}_{nr}(F).$ Since all the schemes under consideration are schemes of finite type over the algebraically closed field $k,$ closed points are the same thing as $k$-points. A $k$-point in the source is a $2n$-tuple $(a_1,\dots ,a_n; \bu_1,\dots ,\bu_n)$ where the $a_i$ are in $\V_{nr+1}$ and the $\bu_i$ constitute a base of discriminant $nr.$ By definition $\alpha(a_1,\dots ,a_n; \bu_1,\dots ,\bu_n)=(\sum_{i=1}^na_i\bu_i , \bu_1,\dots ,\bu_n).$ This is evidently a point in $\mathbb{U}(\mathcal{B}).$ By Lemma \ref{gpt}, since closed points are $k$-points, the map is evidently surjective. Thus $\alpha$ is a morphism of $\mathcal{B}_{nr}(F)$-group schemes from $\V_{nr+1}^n\times_k\mathcal{B}_{nr}(F)$ to $F/p^{nr+1}F\times_k\mathcal{B}_{nr}(F)$ whose image is the closed subscheme $\mathbb{U}(\mathcal{B}).$ Now the subscheme $\mathbb{U}(\mathcal{B})$ is the scheme defined by certain equations but with the reduced subscheme structure. On the other hand $\alpha$ is a map from a variety. In fact $\V_{nr+1}^n\times_k\mathcal{B}_{nr}(F)$ is the product over $k$ of a rational variety with an affine space and so it is a variety. The image of an integral scheme is integral and so reduced. It follows that the image of $\alpha$ is exactly $\mathbb{U}(\mathcal{B}).$
\end{proof}

\begin{lemma}\label{flg} The group scheme $\mathbb{U}(\mathcal{B})$ is smooth over $\mathcal{B}_{nr}(F).$
\end{lemma}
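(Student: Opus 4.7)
The plan is to verify smoothness via the infinitesimal lifting criterion. Since $f:\mathbb{U}(\mathcal{B})\to \mathcal{B}_{nr}(F)$ is locally of finite presentation, smoothness reduces to formal smoothness: given a square-zero extension $R\twoheadrightarrow R/I$ together with a commutative diagram of morphisms $\Spec R\to \mathcal{B}_{nr}(F)$ and $\Spec R/I\to \mathbb{U}(\mathcal{B})$, I must produce a diagonal lift $\Spec R\to \mathbb{U}(\mathcal{B})$.

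Writing $W_R := \V_{nr+1}(R)$ for the truncated Witt ring over $R$, the bottom arrow supplies a base $(\bu_1,\dots,\bu_n)$ over $R$ with determinant a unit multiple of $p^{nr}$ in $W_R$, and the top arrow supplies a vector $\bar{\boldsymbol{v}}\in F(R/I)/p^{nr+1}F$ satisfying the defining equations $\eta_{j,i}=0$. By Lemma~\ref{gpt} these equations express scheme-theoretically that $\bar{\boldsymbol{v}}$ lies in the $W_{R/I}$-span of the $\bar{\bu}_i$'s. The key step is to write $\bar{\boldsymbol{v}}=\sum_i \bar{c}_i\, \bar{\bu}_i$ with explicit $\bar{c}_i\in W_{R/I}$, using the Cramer identities $\bar{c}_i=\bar{\Delta}_i/\bar{\Delta}$; here $\bar{\Delta}_i\in p^{nr}W_{R/I}$ because the $\eta_{j,i}$'s vanish, and $\bar{\Delta}$ is a unit multiple of $p^{nr}$. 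Once the $\bar{c}_i$'s are in hand, I would lift each to $c_i\in W_R$—possible because the Witt-vector functor is componentwise surjective on surjections $R\twoheadrightarrow R/I$—and define $\boldsymbol{v}:=\sum_i c_i\bu_i$; the tuple $(\bu_1,\dots,\bu_n,\boldsymbol{v})$ is then the sought $R$-point of $\mathbb{U}(\mathcal{B})$ lifting the given $R/I$-point.

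As a consistency check, the geometric fibers of $f$ are indeed smooth: over an algebraically closed field a base $U$ admits an Iwahori decomposition $U=\sigma\,\operatorname{diag}(p^{a_1},\dots,p^{a_n})\,\tau$ with $\sigma,\tau\in \SL_n(\V_{nr+1})$ and $\sum a_i=nr$, whence $L_U/p^{nr+1}F\cong \sigma\cdot \prod_i(p^{a_i}\V/p^{nr+1}\V)$, and each factor, taken with its reduced structure, is isomorphic to $\mathbb{A}^{nr+1-a_i}$ via the surviving Witt coordinates. So $L_U\cong \mathbb{A}^{n(nr+1)-nr}$, the expected fiber dimension.

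The main technical obstacle will be making the Cramer's-rule step rigorous when $R/I$ is neither a field nor perfect. The remark following Lemma~\ref{gpt} warns that interpreting $\bar{\Delta}_i/p^{nr}$ componentwise in Witt coordinates calls for $p^{nr}$-th roots, which over a non-perfect (or non-reduced) ring need not exist. The resolution should exploit that the $\bar{c}_i$'s are themselves Witt vectors, not scalars, and that the vanishing of the $\eta_{j,i}$'s records precisely the same $p$-adic divisibility data on $\bar{\Delta}_i$ as is carried by $\bar{\Delta}$; this should allow the ratio to make sense inside $W_{R/I}$ without root extraction. Verifying this carefully, along with independence of the resulting $\boldsymbol{v}$ from the chosen lifts of the $\bar{c}_i$'s, is where I expect the substantive work to reside; the remainder of the lifting is a formal matter.
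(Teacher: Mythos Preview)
Your approach has a genuine gap at precisely the point you flag. The hope that ``the ratio should make sense inside $W_{R/I}$ without root extraction'' is false: take $n=r=1$, so that a base over $R$ is a single length-two Witt vector $\bu_1$ with components $(0,a)$, $a\in R^{\times}$, and an $(R/I)$-point of the spanning family adds $\bar{\boldsymbol{v}}$ with components $(0,\bar b)$. Over a characteristic-$p$ ring the Witt product formula gives $(c_0,c_1)\cdot(0,\bar a)=(0,c_0^{\,p}\bar a)$, so $\bar{\boldsymbol{v}}=\bar c\,\bar{\bu}_1$ forces $c_0^{\,p}=\bar b/\bar a$, which has no solution in a general $R/I$. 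Thus $\bar{\boldsymbol{v}}$ typically does \emph{not} lie in the $W_{R/I}$-span of the $\bar{\bu}_i$, and the plan of lifting coefficients $\bar c_i$ cannot even begin. You have also misread Lemma~\ref{gpt}: it describes only \emph{geometric} points, and the Remark following it warns explicitly that the passage from the equations $\eta_{j,i}=0$ to ``$\boldsymbol{v}$ is in the span'' is not a polynomial map---so these equations do \emph{not} express span-membership scheme-theoretically. (In the toy example the honest infinitesimal lift is trivial---just lift $\bar b$ to $b\in R$---so smoothness holds; it is only your mechanism for producing the lift that fails.)

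The paper's argument avoids this by never attempting to invert $\alpha$. It uses the morphism $\alpha:\V_{nr+1}^n\times_k\mathcal{B}_{nr}(F)\to\mathbb{U}(\mathcal{B})$ only in the forward direction: base-changing along a geometric point $U$ yields a surjection of $k$-group schemes $\V_{nr+1}^n\twoheadrightarrow f^{-1}(U)$, so each fiber is the image of an integral group scheme, hence integral and connected, hence a smooth $k$-group of dimension $n(nr+1)-nr$. With $\mathcal{B}_{nr}(F)$ integral and the fibers smooth and equidimensional, smoothness of $f$ follows; no expression of $\boldsymbol{v}$ as an $\V$-linear combination over a general base is ever required.
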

\begin{proof}First consider the fiber of $\mathbb{U}(\mathcal{B})$ over the base $U=(\bu_1, \dots ,\bu_n).$ By Lemma \ref{gpt}, the geometric points of the fiber are quantities $\sum_ia_i\bu_i$ in $F/p^{nr+1}F.$ On the other hand, base extending the surjective morphism $\alpha$ by the geometric point $U:\Spec(k)\rightarrow \mathcal{B}_{nr}(F)$ yields the surjective map $\V_{nr+1}^n\rightarrow f^{-1}(U).$ That is the fiber over $U$ is a group scheme over $k$ which is the surjective image of an integral groupscheme. Hence it is integral and connected and so smooth. Each fiber is a subgroup of $F/p^{nr+1}F$ of discriminant $nr$ and so of dimension $n(nr+1)-nr.$ That is $\mathbb{U}(\mathcal{B})$ is a group scheme of finite type over the integral scheme $\mathcal{B}_{nr}(F)$ which is itself of finite type over $k$ and its fibers are smooth and equidimensional. It follows that it is a smooth group scheme over $\mathcal{B}_{nr}(F).$
\end{proof}

 What we have shown is that $\mathbb{U}(\mathcal{B})$ is a flat scheme of lattices over $\mathcal{B}_{nr}(F).$ Since the lattices are of codimension $nr$ in $F$ this determines a morphism $\beta:\mathcal{B}_{nr}(F)\rightarrow \Lrn$ so that $\mathbb{U}(\mathcal{B})=\Ur \times_{\Lrn}\mathcal{B}_{nr}(F)$ and $\beta$ is unique up to multiplication by a unit of $\V.$

\begin{definition}\label{gen}The map $\beta:\mathcal{B}_{nr}(F)\rightarrow \Lrn$ determined by the flat family of lattices $\mathbb{U}(\mathcal{B})$ will be referred to as the generating map.
\end{definition}

\begin{remark}\label{sgs} Let $X$ be an integral separated scheme of finite type over $k$ and let $f:H\rightarrow X$ be a flat group scheme separated and of finite type over $X$ with smooth connected fibers over all points. Suppose further that for each $x\in X$ the fiber $f^{-1}(x)$ is equidimensional of dimension $N$ and that for any point $h\in H$ the equation $N+\operatorname{dim}(X)=\operatorname{dim}_h(H)$ holds. Then $H$ is smooth over $X.$
\end{remark}

This follows for example from [EGA] IV, part 4, Theorem  17.5.1 which asserts that a morphism locally of finite presentation is smooth if and only if it is flat and has smooth fibers. It will be used below.

\section{The Main Theorem}\label{Smt}

The main substance of the proof of the Main Theorem in this section is a demonstration that the generating morphism $\beta:\mathcal{B}_{nr}(F)\rightarrow \Lrn$ is a smooth morphism. We do this by embedding it as a dense open subset of a smooth group scheme over $\Lrn.$ Once this is established the main theorem follows easily.

Consider the universal family $\Ur.$ It is a closed subscheme of \linebreak $F/p^{nr+1}F\times_k \Lrn.$ It is flat by construction with smooth equidimensional fibers and $\Lrn$ is integral (\cite{me}, Theorem 4). It thus meets the conditions of Remark \ref{sgs} and is hence a smooth group scheme over $\Lrn$.  Form the $n$-fold relative fiber product $\Ur \times_{\Lrn}\Ur \times_{\Lrn} \dots \times_{\Lrn}\Ur$, and write it $\Ur(n)$.  Then $\Ur (n)$ is closed in $(F/p^{nr+1}F)^n\times_k\Lrn$ and moreover it is a smooth group scheme over $\Lrn.$

\begin{theorem}\label{brn} There is an open embedding $g$ of $\mathcal{B}_{nr}(F)$ onto a principal open subset of $\Ur (n)$.   The generating morphism 

 \[\beta:\mathcal{B}_{nr}(F)\rightarrow \Lrn \] 
is a smooth morphism which induces an isomorphism from $\mathbb{U}(\mathcal{B})$ to the fiber product $\Ur \times_{\Lrn}\mathcal{B}_{nr}(F).$ 
\end{theorem}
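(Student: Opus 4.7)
The plan is to construct $g$ via the universal property of $\Ur(n)$, identify its image as the principal open subset of $\Ur(n)$ cut out by a single Witt coefficient of the exterior product of the tautological sections, and then deduce smoothness of $\beta$ from the smoothness of $\Ur(n) \to \Lrn$ established just before the theorem. The identification $\mathbb{U}(\mathcal{B}) \simeq \Ur \times_{\Lrn} \mathcal{B}_{nr}(F)$ in the second assertion is built into the construction of $\beta$ (Definition \ref{gen}), so only the construction of $g$ and the smoothness claim require real work.

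To construct $g$, note that $\mathbb{U}(\mathcal{B}) = \Ur \times_{\Lrn} \mathcal{B}_{nr}(F)$ carries a tautological $n$-tuple $(\bu_1, \ldots, \bu_n)$ of sections given by the coordinate functions on $\mathcal{B}_{nr}(F)$. These are $n$ sections of the pullback of $\Ur$ to $\mathcal{B}_{nr}(F)$, and the universal property of the $n$-fold fiber product delivers a morphism $g : \mathcal{B}_{nr}(F) \to \Ur(n)$ lying over $\beta$.

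To identify the image, consider the tautological $n$-tuple $(\tilde{\bu}_1, \ldots, \tilde{\bu}_n)$ on $\Ur(n) \subseteq (F/p^{nr+1}F)^n \times_k \Lrn$; each $\tilde{\bu}_i$ lies in the universal lattice at every point, so their exterior product lies in $p^{nr}\bigwedge^n F / p^{nr+1}\bigwedge^n F$. Writing it as a truncated Witt vector in the style of \eqref{del} produces regular functions $\tilde{\delta}_0, \ldots, \tilde{\delta}_{nr}$ on $\Ur(n)$; the components $\tilde{\delta}_i$ with $i < nr$ vanish at every geometric point, and because $\Ur(n)$ is smooth (hence reduced) over $\Lrn$ they vanish identically. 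Set $V := D(\tilde{\delta}_{nr})$. A functor-of-points comparison then makes $g$ an isomorphism onto $V$: $T$-points of $\mathcal{B}_{nr}(F)$ are $n$-tuples of sections of $F/p^{nr+1}F$ over $T$ whose exterior product has valuation $nr$ everywhere (Definition \ref{base}), while $T$-points of $V$ consist of the same data together with a $T$-flat lattice family of discriminant $nr$ containing and generated by the $n$-tuple; elementary divisor theory, exactly as in the proof of Lemma \ref{gpt}, shows this lattice is uniquely determined by the $n$-tuple, so the forgetful map inverts $g$.

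Smoothness of $\beta$ is then immediate: the restriction $V \to \Lrn$ of the smooth morphism $\Ur(n) \to \Lrn$ is smooth, and $g$ transports this to $\beta$. The main obstacle is the scheme-theoretic step in identifying $V$ functorially -- the vanishing of the $\tilde{\delta}_i$ with $i < nr$ as global functions, not merely at geometric points, and the uniqueness of the lattice spanned by a given $n$-tuple. Both hinge on the reducedness of $\Ur(n)$ and on the local form of elementary divisor theory already used in Lemma \ref{gpt}, so no ideas beyond those appearing earlier in the section are required.
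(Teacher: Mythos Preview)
Your proposal is correct and follows essentially the same route as the paper: you construct $g$ as the map $(\eta,\beta)$ (phrased via the universal property of the fiber product), define the principal open $V=D(\tilde{\delta}_{nr})$ using the Witt coefficients of the exterior product, and use reducedness of $\Ur(n)$ to pass from agreement on geometric points to a scheme-theoretic isomorphism; the paper does exactly this, calling your $V$ its $Z$ and your forgetful map its $h$. The only cosmetic difference is that the paper explicitly writes down $h$ as the projection $\Ur(n)\hookrightarrow (F/p^{nr+1}F)^n\times_k\Lrn\to (F/p^{nr+1}F)^n$ and checks $g,h$ are inverse on $k$-points, whereas you package the same content as a functor-of-points comparison backed by reducedness.
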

\begin{proof}Let $h:\Ur(n)\rightarrow (F/p^{nr+1}F)^n$ be the composite of the closed embedding $\Ur(n)\hookrightarrow (F/p^{nr+1}F)^n\times_k \Lrn$ and the projection on the first factor. Write $\pi:\Ur \rightarrow \Lrn$ for the natural map and write $\pi^n:\Ur(n)\rightarrow \Lrn$ for its Cartesian exponentiation to $\Ur(n)$.

 Both $\mathcal{B}_{nr}(F)$ and  $\Ur(n)$ are varieties of finite type over the algebraically closed field $k.$ Hence if we specify algebraic maps between $\mathcal{B}_{nr}(F)$ and some open subset of $\Ur(n)$  which are inverse to each other on geometric points they are inverse as maps of schemes.

We define functions on  $\Ur(n)$ which we call $\nu_0, \nu_1, \dots ,\nu_{nr}$ by taking $\nu_j=\delta_j\circ h$ where $\delta_j$ is the function defined by equation \ref{del}. The closed points of $\Ur(n)$ are ordered $n$-tuples of vectors with entries in $\V/p^{nr+1}\V$ together with a lattice of discriminant $nr$ in which they lie.  On the $k$-point $(L, \bu_1,\dots ,\bu_n),\; \nu_j$ is the Witt $j$'th component of the determinant of the matrix $(\bu_1, \dots ,\bu_n)$ (Recall that the $\bu_j$ are columns with entries in $\V_{nr+1}$). Since all of the vectors occurring in any point of $\Ur(n)$ lie in a lattice of discriminant $nr$, namely $L,$ the functions $\nu_j$ are all identically zero for $j<nr$. It follows that the image of $h$ lies in the vanishing set of $\delta_0,\dots ,\delta_{nr-1}\text{in}\quad (F/p^{nr+1}F)^n$, that is, the Zariski closure of $\mathcal{B}_{nr}(F)$ in $(F/p^{nr+1}F)^n$. The condition $\nu_{nr}\neq 0$ defines a principal open subscheme of $\Ur(n)$ in which all of the $n$-tuples obtained by excluding the lattice component are ordered bases of discriminant $nr.$ Call this principal open subset $Z.$ We have just shown that $h$ is a map of schemes from $Z$ to $\mathcal{B}_{nr}(F).$ It is trivially surjective.

Now we construct a map inverse to $h.$ The variety $\mathcal{B}_{nr}(F)$ is a subscheme of $(F/p^{nr+1}F)^n.$ Let $\eta:\mathcal{B}_{nr}(F)\rightarrow (F/p^{nr+1})^n$ be the natural embedding. It is a closed embedding into a principal open subset. Let $g$ be the map $(\eta, \beta)$ into $(F/p^{nr+1}F)\times_k \Lrn.$ Then the image of $g$ is evidently in $\Ur(n).$ Since the points of $\mathcal{B}_{nr}(F)$ are bases, this image is contained in $Z.$ The maps $g$ and $h$ are clearly inverse to each other on geometric points. Hence $g$ is an isomorphism from $\mathcal{B}_{nr}(F)$ onto $Z.$ Now $Z$ is a principal open subset of $\Ur(n)$ which is a smooth group scheme over $\Lrn.$ Hence $\beta,$ which is the composition of the open embedding $g$ and the smooth projection $\pi^n$ is also smooth. That is the main assertion of the theorem.

For the last assertion notice that $\beta$ is defined as the map to $\Lrn$ (Equation \ref{gen})corresponding to the flat family $\mathbb{U}(\mathcal{B}).$ In \cite{me} $\Lrn$ is defined as the scheme representing the functor ``isomorphism classes of flat families of lattices" and $\Ur$ is the universal family making it a fine moduli scheme. Thus it is a matter of definition that $\mathbb{U}(\mathcal{B})$ is isomorphic to $\Ur \times_{\Lrn}\mathcal{B}_{nr}(F).$
\end{proof}

We may now turn to the main theorem which is actually little more than a corollary to Theorem~\ref{brn}.  The argument is parallel to the arguments of \cite{kp}. The  first proof of this was given by the junior author in \cite{ast}.

\begin{theorem}[\bf{Main Theorem}]\label{M:th}The lattice variety $\Lrn$ is a normal, projective algebraic variety which is locally a complete intersection.
\end{theorem}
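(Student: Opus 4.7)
The plan is to deduce both assertions from Theorem~\ref{brn} and Proposition~\ref{cin}, together with the information about the smooth locus of $\Lrn$ already cited from \cite{me}. Projectivity is already known from \cite{me}, so only the local complete intersection and normality statements require argument.

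First, I would address the local complete intersection property. By Proposition~\ref{cin}, $\mathcal{B}_{nr}(F)$ is a complete intersection in affine space, hence in particular a local complete intersection scheme. By Theorem~\ref{brn}, the generating morphism $\beta:\mathcal{B}_{nr}(F)\rightarrow \Lrn$ is smooth and surjective. The relevant descent statement is EGA IV, 19.3.5--19.3.6: if $f:X\to Y$ is flat and locally of finite presentation, then $X$ is l.c.i.\ at $x$ if and only if both $Y$ is l.c.i.\ at $f(x)$ and the fiber $f^{-1}(f(x))$ is l.c.i.\ at $x$. Since $\beta$ is smooth, its fibers are smooth and therefore l.c.i., so the l.c.i.\ property at points of $\mathcal{B}_{nr}(F)$ forces the l.c.i.\ property at the image points in $\Lrn$. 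Because $\beta$ is surjective, this gives the l.c.i.\ property at every point of $\Lrn$.

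Next I would prove normality by Serre's criterion $R_1 + S_2$. For $S_2$: a local complete intersection scheme is Cohen--Macaulay, and Cohen--Macaulay schemes satisfy $S_k$ for all $k$; so the l.c.i.\ conclusion just obtained already implies $S_2$. For $R_1$: the introduction recalls from \cite{me} (and \cite{me2}) that the smooth locus of $\Lrn$ is a single $\SL(n,\V)$-orbit whose complement has codimension two. Therefore every point of codimension one in $\Lrn$ is a regular point, which is exactly the condition $R_1$. Combining the two, Serre's criterion yields that $\Lrn$ is normal. Since projectivity of $\Lrn$ was established in \cite{me}, this completes the proof.

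The main obstacle, and the step I would state most carefully, is the descent of the local complete intersection property along $\beta$: this is where the smoothness of $\beta$ is essential, because smoothness is what reduces the general descent statement to the trivial observation that smooth fibers are l.c.i. Everything else is essentially bookkeeping: Proposition~\ref{cin} supplies the l.c.i.\ source, Theorem~\ref{brn} supplies the smooth surjection, and the codimension-two statement from \cite{me} supplies $R_1$. No new geometric input is needed beyond what has already been assembled.
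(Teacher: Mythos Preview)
Your proof is correct and follows essentially the same route as the paper: use Proposition~\ref{cin} and the smooth surjection of Theorem~\ref{brn} to descend the l.c.i.\ property to $\Lrn$, then combine with the codimension-two singular locus from \cite{me} and Serre's criterion for normality. The only cosmetic difference is that the paper cites Avramov's theorem (via \cite{lav} or \cite{mat}) for the descent step rather than EGA~IV, and leaves the $S_2$ reasoning implicit.
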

\begin{proof}Consider $\mathcal{B}_{nr}(F)$. There is an open subset of an affine space in which it is a complete intersection. Moreover by Theorem~\ref{brn}, there is a smooth, surjective morphism $\beta:\mathcal{B}_{nr}(F)\rightarrow \Lrn$. Hence by a theorem of L. Avramov (see \cite{lav} or the remark following Theorem~23.6 on p.~182 in \cite{mat}), $\Lrn$ is locally a complete intersection. Now by Theorem 6 on p.~113 of \cite{me}, the singular locus of $\Lrn$ is of codimension two. Consequently by the criterion of Serre, $\Lrn$ is normal.
\end{proof}

We recall the definition of a ``quotient morphism" in \cite{bo}. Designed to cover the most simple quotients over fields of positive characteristic, this definition is a parallel to the corresponding notion in \cite{git}.  A classical quotient morphism is a surjective open map $\pi:V\rightarrow W$ such that $\V_W(U)$ is the set of functions on $\pi^{-1}(U)$ constant on the fibers of $\pi$.  The following was established in \cite{ast}.
\begin{corollary}The map $\beta:\mathcal{B}_{nr}(F)\rightarrow \Lrn$ is a classical quotient morphism. Moreover the fibers of $\beta$ are $\GL(n,\V/p^{nr+1}\V)$-orbits.
\end{corollary}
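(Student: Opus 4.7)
The plan is to exhibit an explicit right action of $G = \GL(n, \V/p^{nr+1}\V)$ on $\mathcal{B}_{nr}(F)$ whose orbits are exactly the fibers of $\beta$, and then to deduce the sheaf-theoretic condition of a classical quotient morphism by faithfully flat descent, using that $\beta$ is smooth and surjective.

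First I would define the $G$-action on $R$-points by right multiplication of the base matrix, $(\bu_1,\dots,\bu_n) \cdot g = (\sum_i g_{i1}\bu_i, \dots, \sum_i g_{in}\bu_i)$. Since the exterior product scales by the unit $\det(g)$, the discriminant condition is preserved and this is an action on the scheme $\mathcal{B}_{nr}(F)$. Clearly $\beta(Ug) = \beta(U)$, as invertible column operations do not alter the $\V$-span. Conversely, if $\beta(U) = \beta(V) = L$, lift $U$ and $V$ to $\V$-bases $U^*, V^*$ of $L$ (possible because $L$ is free over the DVR $\V$); freeness yields a unique $\tilde g \in \GL(n,\V)$ with $V^* = U^* \tilde g$, and reducing modulo $p^{nr+1}$ produces $g \in G$ with $V = Ug$. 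Hence the fibers of $\beta$ are precisely the $G$-orbits. Surjectivity of $\beta$ on $k$-points is similarly immediate (every lattice admits a $\V$-basis), and openness follows at once from the smoothness established in Theorem~\ref{brn}.

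For the sheaf condition, let $f$ be a regular function on $\beta^{-1}(U)$ constant on fibers, hence $G$-invariant. Consider the morphism $\Phi: G \times_k \mathcal{B}_{nr}(F) \to \mathcal{B}_{nr}(F) \times_{\Lrn} \mathcal{B}_{nr}(F)$ sending $(g, U)$ to $(U, Ug)$; it is surjective on $k$-points since orbits equal fibers. The target $\mathcal{B}_{nr}(F) \times_{\Lrn} \mathcal{B}_{nr}(F)$ is reduced, being smooth over either factor via base change of $\beta$. $G$-invariance of $f$ gives $\Phi^* p_1^* f = \Phi^* p_2^* f$, which together with surjectivity of $\Phi$ and reducedness of the target forces $p_1^* f = p_2^* f$. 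Faithfully flat descent along the smooth surjective morphism $\beta$ then produces a unique regular function on $U \subseteq \Lrn$ pulling back to $f$, giving the required identification. The main obstacle I anticipate is precisely this descent step; the key technical point is the reducedness of $\mathcal{B}_{nr}(F) \times_{\Lrn} \mathcal{B}_{nr}(F)$, because the stabilizer subgroup scheme of a base is generally nontrivial, so $G$ does not act freely and $\beta$ is not a $G$-torsor in the strict sense. With reducedness in hand the pointwise agreement automatically promotes to scheme-theoretic agreement, and the remaining ingredients are either explicit (definition of the action and transitivity on orbits) or follow immediately from Theorem~\ref{brn}.
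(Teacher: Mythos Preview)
Your argument is correct, and it takes a genuinely different route from the paper.

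The paper invokes a lemma of Borel (Lemma~6.2, p.~95 of \cite{bo}): a surjective open separable morphism from an irreducible variety onto a \emph{normal} variety is automatically a classical quotient morphism. So the paper's proof of the sheaf condition is a one-line citation, but it rests essentially on the normality of $\Lrn$ established in the Main Theorem. Your argument bypasses this entirely: you work with the descent datum $\mathcal{B}_{nr}(F)\times_{\Lrn}\mathcal{B}_{nr}(F)$, observe it is reduced because it is smooth over the integral scheme $\mathcal{B}_{nr}(F)$, and then use surjectivity of $\Phi$ on $k$-points together with reducedness to upgrade pointwise equality $p_1^*f = p_2^*f$ to scheme-theoretic equality; faithfully flat descent along the smooth surjection $\beta$ finishes. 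This needs only that $\beta$ is smooth and surjective and that $\mathcal{B}_{nr}(F)$ is reduced (both already established before the Main Theorem), so in principle your proof of the corollary does not depend on knowing $\Lrn$ is normal. What you lose is brevity; what you gain is that the argument is self-contained and logically independent of the normality result.

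For the orbit statement, both you and the paper argue the same way: two $\V$-bases of the same lattice differ by an element of $\GL(n,\V)$, and one reduces modulo $p^{nr+1}$.
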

\begin{proof}By Lemma 6.2 on p.95 of \cite{bo}, this will be true if $\beta$ is a surjective open separable map from an irreducible variety onto a normal variety. Now $\beta$ is a smooth surjective morphism by Theorem~\ref{brn}. Hence it is open and separable. The scheme $\Lrn$ is a normal variety by Theorem~\ref{M:th} and by the Lemma \ref{rv} $\mathcal{B}_{nr}(F)$ is reduced and irreducible. The conditions of the lemma are satisfied and so the map is a classical quotient map.

To see that the fibers of $\beta$ are orbits, let us consider the fiber over the lattice $L$.  It consists of the set of $n$-tuples of elements $(\overline{\bu}_1, \dots ,\overline{\bu}_n)\in (L/p^{nr+1}F)^n$ which can be represented as classes of the column spaces of matrices in $M_n(\V)$ with determinant of the form $\sum_{j\geq nr}\xi(\delta_j)^{p^{-j}}p^{j}$ such that $\delta_{nr} \neq 0$. Over $\V$ any two such matrices differ by multiplication by an element of $\GL(n,\V)$. This certainly implies the result.
\end{proof}
Note that $\mathcal{B}_{nr}(F)$ is not a principal bundle over $\Lrn$. The infinite dimensional version of the same construction is a principal bundle for the proalgebraic group $\GL(n,\V)$, but in passing to the finite dimensional situation of $\Ur$ viewed as a subscheme of $\Lrn \times_kF/p^{nr+1}F$ one could at best hope to use a nonconstant group scheme such as the scheme of module automophisms of $\Ur$. For a principal $G$-bundle $\pi:X\rightarrow Y$, the group of $G$-automorphisms of each fiber of $\pi$ is isomorphic to $G$. In the case at hand the automorphisms of the fiber over $L$ is the finite dimensional algebraic group $\operatorname{Aut}_{\V}(L/p^{nr+1}F)$ and this changes as the elementary divisors of $L$ change.

\section{Comparison to the Classical Case}\label{CC}
The results and methods of the previous section raise the question of whether there are greater similarities between lattice varieties and the corresponding Schubert cells of infinite Grassmann varieties. In this section as well as the next we will show that there is an intriguing analogy. More specifically, we show that many constructions are virtually identical provided one replaces linear algebra with $p$-linear algebra. We begin by recalling certain results of \cite{me}and \cite{me2}. In \cite{me2} the senior author constructed  a certain canonical sequence of compatible projective embeddings of the schemes $\Lrn$. This was done through a careful study of the Lie algebra functor on $\V$-lattices of rank $n$.  For any algebraic $k$-group $H$, write $\lie H$ for its Lie algebra. Recall that, over a field of positive characteristic, the Lie algebra of an algebraic group is a restricted Lie algebra, that is, a Lie algebra with a formal $p$'th power satisfying the Jacobson identity. The general case is discussed below in Section~\ref{RLA} but the Lie algebra of a commutative group scheme is just a vector space with a $p$-linear endomorphism. The $p$-linear endomorphism is nothing but the $p$'th power operation on left invariant vector fields.

Write $\Or$ for the set of formal series $\sum_{i=0}^{\infty}a_i \theta^i,\; a_i\in k$. These formal series form a ring subject to the multiplication $\theta a=a^p\theta.$ We call this the complete Ore ring on $k.$ It is not a $k$-algebra; it is easy to see that its center is just $\mathbb{F}_p.$ We shall write it unadorned to signify the left module over itself and we shall write $\Or '$ for $\Or$ viewed as a right $\Or$-module. In \cite{me2} or in \cite{cl}, it is shown that the Lie algebra of the additive proalgebraic group $\V$ is just $\Or$ and that the Lie algebra of $\V/p^s\V$ is $\Or/\theta^s\Or.$ Further, since the Lie algebra of a product is the product of the Lie algebras, $\lie F=\Or^n.$ Write $\mathfrak{F}$ for $\Or^n$, particularly when it is thought of as $\lie{F}.$ If $M\subseteq \Or^n$ is a left $\Or$-submodule of finite vector space codimension $q$ we shall call it an $\Or$-lattice of codimension $q$ in $\Or^n.$

It is known that all ideals in $\Or$ are principal and that since $k$ is perfect, they are two-sided and Noetherianness arguments apply. Moreover every finite left (respectively right) $\Or$-module is a direct sum of cyclics and that each is of the form $\Or/\Or\theta^s$ (respectively $\Or/\theta^s\Or$) if it is a torsion module (\cite{spr} pp. 49-51 or \cite{ffa}, 1.7 for a fuller treatment).  If $\mathfrak{N}$ is an $\Or$-lattice in $\F,$ just as is the case over a discrete valuation ring, there is a basis for $\F, \{\fu_1, \dots ,\fu_n\}$ and integers $s_1, \dots, s_n,$ so that $\{\theta^{s_1}\fu_1, \dots , \theta^{s_n}\fu_n\}$ is a basis for $\mathfrak{N}$ and $s_1+\dots +s_n=\operatorname{codim}\mathfrak{N}$.  The $s_i$ are uniquely determined.

The $n\times n$ matrices with entries in $\Or$ are a ring (they are not a $k$-algebra) and the invertible elements in this ring form a group which we will denote $\GL(n, \Or)$.  We caution the reader that this group has a finite center, to wit, scalar matrices with entries in $\mathbb{F}_p^*$. Notice that if $\Or$ is thought of as a left $\Or$ module then $\Hom_{\Or}(\Or, \Or)= \Or$ where the last $\Or$ acts as right multiplications. 

Write the ring of matrices with $\Or$-entries $M_n(\Or)$. Then recall that \linebreak $\Hom_{\Lambda}(\coprod_iM_i,\coprod_jN_j)=\coprod_{i,j}\Hom_{\Lambda}(M_i,N_j)$ and these homomorphisms can be thought of as matrices with $i,j$ entry in $\Hom_{\Lambda}(M_i,N_j)$. Then thinking of the elements of $\F$ as rows let the matrix $A=(a_{i,j})$ operate on the row, $B=(b_i)$ by the formula, $BA=(c_j),\: c_j=\sum_i b_ia_{i,j}$. This makes $\F$ a right $M_n(\Or)$-module and it induces an isomorphism $\Hom_{\Or}(\F,\F)\simeq M_n(\Or)$. It is clear that $\GL(n,\Or)$ operates on $F$ on the right. (Note that transpose is not an antihomomorphism.) Since $\GL(n,\Or)$ consists entirely of module automorphisms each element $A$ induces isomorphisms $\F/\mathfrak{N}\simeq \F/\mathfrak{N}A$ for each $\Or$-lattice $\mathfrak{N}$. That is, the elements of $\GL(n,\Or)$ preserve codimension. Now the lattices of codimension $q$ all contain $\F\theta^q$. It is also true that $\theta^q\Or=\Or\theta^q$ for perfect $k$ and so $\GL(n,\Or/\theta^q\Or)$ acts algebraically on $\F/\F\theta^q$ for each $q$.

Elements of $M_n(\Or)$ can be written as sums $A=\sum_i A_i\theta^i$ where $A_i$ is an $n\times n$ matrix with entries in $k$. Such an element is invertible if and only if $A_0$ is and so there is a polynomial function $c_0(A)=\operatorname{det}A_0$, so that $A$ is invertible if and only if $c_0(A)\neq 0$.  The same remarks and the same function apply to $M_n(\Or/\theta^q\Or)$.

The fact that each finite $\Or$-module is uniquely a direct sum of cyclic modules means that all bases of $\F$ have just $n$ elements and that for any two bases, $\{\fu_1, \dots, \fu_n\}$ and $\{\mathfrak{v}_1, \dots ,\mathfrak{v}_n\}$, the map $\alpha(\sum_ia_i\fu_i)=\sum_ia_i\mathfrak{v}_i$ is an $\Or$-automorphism whence there is a matrix $A\in M_n(\Or)$ such that $\fu_iA=\mathfrak{v}_i$.  If $\mathfrak{L}$ and $\mathfrak{N}$ are two $\Or$-lattices in $\F$ such that $\F/\mathfrak{L}\simeq \F/\mathfrak{N}$, then there are integers $s_1,\dots ,s_n$ and bases for $\F$, $\{\fu_1, \dots ,\fu_n\}$ and $\{ \mathfrak{v}_1,\dots ,\mathfrak{v}_n\}$, so that $\mathfrak{L}$ has a basis $\{\theta^{s_i}\fu_i:\, i=1,\dots,n\}$ and $\mathfrak{N}$ has a basis $\{\theta^{s_i}\mathfrak{v}_i:\, i=1,\dots,n\}$. This means that there is an element $A$ of $\GL(n,\Or)$ so that $\mathfrak{L}A=\mathfrak{N}.$ This means that the lattices $\mathfrak{L}$ such that $\F/\mathfrak{L}$ lie in a single isomorphism class form a single orbit under the action of $\GL(n,\Or)$ and that each such orbit is uniquely characterized by the sequence $(s_1, \dots, s_n)$ arranged in nonincreasing order.

It is easy to prove that the restricted Lie subalgebras of a restricted commutative nilpotent Lie algebra $T$ of a fixed codimension form a projective scheme. (See \S 5 of   \cite{me2} They are simply subspaces fixed under the $p$'th power operation.) It is clear that the restricted Lie subalgebras of $\F$ of codimension $q$ can be identified with the restricted Lie subalgebras of $\F/\theta^q\F$ of codimension $q,$ that is to say with its $\Or$ submodules. To prove the next results it is necessary to review the computations of the tangent spaces to a point of $\Lrn$ as well as the tangent to a certain scheme of Lie subalgebras.  

It is a consequence of Proposition 4 of \cite{me2} and of the proof of Theorem 1 of the same paper that the map which sends a lattice of codimension $q$ to its Lie algebra is an algebraic morphism from the scheme of lattices in $F$ of codimension $q$ to the scheme of restricted Lie subalgebras of $\F$ of codimension $q.$ We wish to describe the differential of that map.  For a finite dimensional restricted Lie algebra $T$, write $\mathcal{L}_q(T)$ for the scheme of restricted Lie subalgebras of $T$ of codimension $q.$

First we describe the tangent space to a lattice in $\Lrn.$ The relevant results were first proven on page 107 and following in \cite{me} but we have included an appendix with proofs specifically adapted to the needs of this paper. The proofs in the appendix are complete. Let $F_{nr}=F/p^{nr}F$ and let $M\subset F_{nr}$ denote a lattice of corank $nr$ contained in it. A lattice is nothing more than a $\V$-stable smooth subgroup of the smooth $k$-groupscheme $F$ of finite corank. Since every lattice of corank $nr$ contains $p^{nr}F$ lattices of corank $nr$ correspond to $\V$-submodules of $F_{nr}$ of corank $nr$and henceforth we shall view these objects interchangeably. Let $A$ be the coordinate ring of $F_{nr}$ and let $I_M$ be the ideal defining $M$ in $A.$ The tangent space to $M$ in $\Lrn$ will be the set of $\V$-stable $k[\epsilon]$ subgroup schemes of $F_{nr}(\epsilon)=\Spec (k[\epsilon])\times_kF_{nr}$ which admit $M$ as the fiber over the closed point in $\Spec (k[\epsilon]).$ These correspond to Hopf ideals in $A[\epsilon]$ reducing to $I_M$ modulo $\epsilon$ which are also stable with respect to the coaction describing scalar multiplication by $\V$ on $M$. Such ideals are classified in the appendix (Theorem A.6, Theorem A.7). They correspond to homomorphisms of schemes of $\V$-modules from $M/pM$ to $\mathfrak{F}/\mathfrak{M}$ where $\mathfrak{F}$ is the restricted Lie algebra of $F_{nr}$ and $\mathfrak{M}$ is the Lie algebra of $M$.

 By general deformation theory,the ideal defining such a deformation is determined by a map $\delta \in \Hom_{A/I_M}(I_M/I_M^2,A/I_M)\}$. Ideals lifting $I_M$ correspond bijectively to these maps by letting $\tilde{I}_M(\delta)$be the ideal in $A[\epsilon]$ generated by $\{ a+b\epsilon:a\in I_M,\quad b+I_M=\delta(a+I_M^2)\}$ and it defines a $k[\epsilon]$ subgroup scheme of $F_{nr}(\epsilon)$ if it is contained in the identity ideal, it is stable under $s_F$ (the coinverse) and if $\alpha(\tilde{I}_M(\delta)\subseteq \tilde{I}_M(\delta)\otimes A[\epsilon]+A[\epsilon]\otimes \tilde{I}_M(\delta).$ It will define an $\V$-submodule if $\mu( \delta(I/I^2))\subseteq I/I^2\otimes_{\ke}\ke$. The map alpha is the coaddition on $F$ and $\mu$ is the coaction $\mu: k[F]\rightarrow k[F]\otimes k[\V]$ corresponding to multiplication by elements of $\V$ on $F$.

A brief digression on the conormal bundle to a smooth subgroup-scheme is in order. Let $\alpha:A\rightarrow A\otimes_kA$ be the coaddition on $F_{nr}.$ Then $\alpha(I_M)\subseteq (I_M\otimes A+A\otimes I_M).$ Then $(I_M\otimes A+A\otimes I_M)/(I_M\otimes A+A\otimes I_M)^2=I_M/I_M^2\otimes A/I_M \coprod A/I_M\otimes I_M/I_M^2.$ Now $\alpha$ induces an action of $M$ on its normal bundle in $F_{nr}$ and it can be described by the map induced by $\alpha$ first from $I_M/I_M^2$ to $(I_M\otimes A+A\otimes I_M)/(I_M\otimes A+A\otimes I_M)^2$ and then by projection onto $I_M/I_M^2\otimes_kA/I_M.$ This is a coaction making the coherent sheaf associated to $I_M/I_M^2$ into a homogeneous bundle.The second projection onto $A/I_M\otimes I_M/I_M^2$ is the same map twisted because of the commutativity of $F$ and $M.$ Denote this map $\alpha_N.$ It is a map into a direct sum with first component the right coaction on the normal bundle and with second component the left coaction. In particular we may write the equation:
\begin{equation}\label{nma}
\alpha^N=\alpha_R\oplus \alpha_L
\end{equation}

For some $u\in I_M$ consider $u+v\epsilon$ where $v$ is congruent to $\delta(u)$ modulo $I_M.$ The coaddition carries this to $\alpha(u)+\alpha(v)\epsilon.$ Consider $\tilde{I}_M(\delta)\otimes A[\epsilon)+A[\epsilon]\otimes \tilde{I}_M(\delta).$ This is an ideal in $A[\epsilon]\otimes A[\epsilon]$  lying over $I_M\otimes A+A\otimes I_M$ and so it is determined by a map $\Delta:(I_M\otimes A+A\otimes I_M)/(I_M\otimes A+A\otimes I_M)^2\rightarrow A/I_M\otimes A/I_M$ just as $\tilde{I}_M(\delta)$ was. It is not difficult to see that the map would have to be $\delta \otimes \operatorname{id}\oplus \operatorname{id}\otimes \delta.$ Hence $\alpha(u)+\alpha(v)\epsilon \in \tilde{I}_M(\delta)\otimes A[\epsilon]+A[\epsilon]\otimes \tilde{I}_M(\delta)$ if and only if $\alpha(v)$ is congruent to $(\delta \otimes \operatorname{id}\oplus \operatorname{id}\otimes \delta)(\alpha(v)).$ (The congruence is modulo the square of the ideal of $M\times_kM$ in $F_{nr}\times F_{nr}$.) 

There is, in the case of a lattice, yet another coaction. The scalar multiplication of $\V_r$ on $F_{nr}$ namely $\V_r\times_kF_{nr}\rightarrow F_{nr}$ is given by a coaction $\mu:k[F_{nr}]\rightarrow k[F_{nr}]\otimes k[\V_r]$. If the subgroup $M\subseteq F_{nr}$ is a lattice then the restriction map $k[F_{nr}]\rightarrow k[M]$ must induce a corresponding coaction on $k[M]$. This will be so if and only if $\mu(I_M)\subseteq I_M\otimes k[\V_r]$. If this is so then $\mu(I_M^2)\subseteq I_M^2\otimes k[\V_r]$ and so there is an induced conormal coaction: 

\begin{equation}
\mu^N:I_M/I_M^2\rightarrow I_M/I_M^2\otimes k[\V_r]
\end{equation}

 Now the ideal $\tilde{I}_M(\delta)$ defines a $\ke$-scheme of lattices if and only if $\mu(\tilde{I}_M(\delta)\subseteq \tilde{I}_M(\delta)\otimes \ke[F_{\epsilon}]$. Now $\tilde{I}_M(\delta)\otimes \ke[F_{\epsilon}]$ corresponds to an infinitesimal deformation of $I\otimes k[F]$ and by simpe flatness it is easy to see that the classifying map of $\tilde{I}_M(\delta)\otimes \ke[F_{\epsilon}]$ is $\delta \otimes \id$. A complete discussion is to be found in the appendix. We summarize:   

\begin{lemma}\label{tlat}Let $M\subseteq F_{nr}$ be a lattice of codimension $nr$ in $F_{nr}$ with defining ideal $I_M.$ Then the $k[\epsilon]$ sublattices of $F_{nr}(\epsilon)$ of codimension $nr$ are all uniquely determined by maps $\delta \in \Hom_{A/I_M}(I_M/I_M^2, A/I_M)$ such that:
\begin{enumerate}

\item  $\delta$ is $s$ stable where $s$ is the coinverse.
\item  $\alpha \circ \delta= (\delta \otimes \id + \id \otimes \delta)\circ \alpha_N$
\item $\mu \circ \delta =(\delta \otimes \id)\circ \mu^N$
\end{enumerate}
This bijective correspondence assigns to the map $\delta$ the ideal in $A[\epsilon]$ generated by $\{u+v\epsilon :u\in I_M,\; v\equiv \delta(u) \operatorname{mod}I_M^2\}.$
\end{lemma}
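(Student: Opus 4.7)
The plan is to reduce the classification of $k[\epsilon]$-sublattices of $F_{nr}(\epsilon)$ deforming $M$ to three successive conditions imposed on a single normal field $\delta \in \Hom_{A/I_M}(I_M/I_M^2,A/I_M)$. Each condition corresponds to one structural feature of $M$ inside $F_{nr}$ that must be preserved by the deformation: the ideal structure, the Hopf structure (coaddition and coinverse), and the $\V$-coaction.

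First, I would invoke the standard deformation-theoretic bijection, which is by assumption already carried out in the appendix as Theorems A.6 and A.7: a $\ke$-flat ideal $\tilde{I} \subseteq A[\epsilon]$ with $\tilde{I}\otimes_{\ke} k = I_M$ is uniquely determined by an $A/I_M$-linear map $\delta: I_M/I_M^2 \to A/I_M$, and $\tilde{I}_M(\delta)$ is precisely the ideal generated by $\{u + v\epsilon : u\in I_M,\; v \equiv \delta(u) \bmod I_M^2\}$. This takes care of the last sentence of the lemma and reduces the problem to translating the remaining structures into algebraic conditions on $\delta$.

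Next, I would handle the Hopf-ideal conditions. For $\tilde{I}_M(\delta)$ to define a $\ke$-subgroup-scheme of $F_{nr}(\epsilon)$, one requires $\alpha(\tilde{I}_M(\delta)) \subseteq \tilde{I}_M(\delta)\otimes A[\epsilon] + A[\epsilon] \otimes \tilde{I}_M(\delta)$ and $s(\tilde{I}_M(\delta)) \subseteq \tilde{I}_M(\delta)$. The key point is that the right-hand side of the first containment is itself a $\ke$-flat deformation of the ideal $I_M\otimes A + A\otimes I_M$ cutting out $M\times_k M \subseteq F_{nr}\times_k F_{nr}$. Its classifying map in $\Hom\bigl((I_M\otimes A + A\otimes I_M)/(\cdot)^2,\, A/I_M\otimes A/I_M\bigr)$ is, by flatness, exactly $\delta\otimes \id \oplus \id\otimes\delta$ with respect to the direct-sum decomposition of the conormal bundle in equation \eqref{nma}. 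Pulling back by $\alpha$ and comparing classifying maps on the conormal bundle $I_M/I_M^2$, the subgroup condition becomes the identity $\alpha\circ\delta = (\delta\otimes\id + \id\otimes\delta)\circ\alpha_N$, which is condition (2). The $s$-stability is condition (1) by the same translation.

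Finally, I would handle the $\V_r$-module condition. The coaction $\mu: A \to A\otimes k[\V_r]$ descends to $M$ because $M$ is an $\V$-submodule, giving the conormal coaction $\mu^N: I_M/I_M^2 \to I_M/I_M^2\otimes k[\V_r]$. Since $k[\V_r]$ carries no $\epsilon$-perturbation, the target ideal $\tilde{I}_M(\delta)\otimes k[\V_r]$ is a first-order deformation of $I_M\otimes k[\V_r]$ whose classifying map is simply $\delta\otimes\id$ relative to $\mu^N$. Requiring $\mu(\tilde{I}_M(\delta))\subseteq \tilde{I}_M(\delta)\otimes k[\V_r]$ is then precisely condition (3). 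Running each translation in reverse shows sufficiency.

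The hard part will be the bookkeeping in step two: one has to verify that $\tilde{I}_M(\delta)\otimes A[\epsilon] + A[\epsilon]\otimes \tilde{I}_M(\delta)$ really is $\ke$-flat so that the classifying-map formalism applies, and then correctly identify its classifying map as $\delta\otimes\id \oplus \id\otimes\delta$ against the decomposition $\alpha_N = \alpha_R\oplus\alpha_L$ of \eqref{nma}. Once this is settled, matching $\alpha\circ\delta$ against $(\delta\otimes\id+\id\otimes\delta)\circ\alpha_N$ is a routine diagram chase, and the rest of the proof is mechanical.
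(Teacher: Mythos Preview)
Your proposal is correct and follows essentially the same approach as the paper: the argument there (spread across the discussion preceding the lemma and Propositions A.1--A.3 of the appendix) first classifies flat deformations of $I_M$ by normal fields $\delta$, then computes the classifying map of $\tilde{I}_M(\delta)\otimes A[\epsilon]+A[\epsilon]\otimes\tilde{I}_M(\delta)$ as $\delta\otimes\id\oplus\id\otimes\delta$ and of $\tilde{I}_M(\delta)\otimes k[\V_r]$ as $\delta\otimes\id$, and reads off conditions (2) and (3) by comparing with $\alpha$ and $\mu$ applied to $\tilde{I}_M(\delta)$. Your identification of the ``hard part'' is also accurate; in the paper this is precisely the content of Proposition~A.2.
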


Such maps have a natural interpretation. Since $I/I^2$ is a homogeneous bundle on the group $M$ it can be written $I/I^2=\mathfrak{n}_{M/F}\otimes_kk[M]$ where $\mathfrak{n}_{M/F}$ denotes the $k$-vector space of invariant sections in $I/I^2$. Then $\mathfrak{n}_{M/F}$ can be canonically identified with the linear dual of $\mathfrak{F}/\mathfrak{M}$ the Lie algebra of the algebraic $\V$-module $F/M$. Moreover $\mathfrak{n}_{M/F}$ is preserved by the coactions $\alpha^N$ and $\mu^N$. That is for $u\in \mathfrak{n}_{M/F},\; \alpha^N(u)=u\otimes 1+1\otimes u$ and $\mu^N(\mathfrak{n}_{M/F}\subseteq \mathfrak{n}_{M/F}\otimes k[\V]$. This is worked out in detail in Proposition A.5 and Theorem A.6 of the appendix. Let $k[\mathfrak{n}_{M/F}]$ denote the symmetric algebra on the invariant sections. It can be thought of as the coordinate ring of $\mathfrak{F}/\mathfrak{M}$ under addition and then (2) and (3) of Lemma \ref{tlat} are equivalent to the statement that $\delta$ induces a morphism of schemes of modules from $M$ to $\mathfrak{F}/\mathfrak{M}$. Theorem A.6 of the appendix shows that this induces an isomorphism between the tangent space to the lattice $M$ in the scheme of lattices in $F$ of corank $nr$ and $\Hom_{\V}(M/pM, \mathfrak{F}/\mathfrak{M})$.

 We will write $\mathcal{T}_F(M)$ for the tangent space to the lattice $M$ viewed as a point in $\Lrn$ which is just the scheme of lattices of codimension $nr$ in $F_{nr}.$ The following and its proof can be found in in \cite[Lemmas 18, 19 pages 111--113]{me} but a more complete version specifically adopted to the needs of this paper is to be found in the appendix (Theorem A.7). 

\begin{proposition}\label{tanl}Let $M$ be a lattice of codimension $nr$ in $F_{nr}.$  Let $\mathfrak{n}=(\mathfrak{F}/\mathfrak{M})^*$ denote the dual of the quotient of the Lie algebra of $F_{nr}$ by the Lie algebra of $M.$ Then $\mathcal{T}_F(M)$ is canonically isomorphic to $Hom_{\V}(M/pM, \mathfrak{f}/\mathfrak{m})$ where the morphisms are in the category of schemes of modules over schemes of rings.
\end{proposition}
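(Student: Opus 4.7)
The plan is to build directly on Lemma~\ref{tlat}, which already parameterizes $\mathcal{T}_F(M)$ by those $k[M]$-linear maps $\delta: I_M/I_M^2 \to A/I_M$ compatible with the coinverse $s$, the coaddition $\alpha_N$, and the scalar coaction $\mu^N$. My goal is to recast each such $\delta$ intrinsically as a morphism $M/pM \to \mathfrak{F}/\mathfrak{M}$ in the category of schemes of $\V$-modules.

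First I would exploit the homogeneous bundle structure of the conormal sheaf, established in Proposition~A.5 of the appendix: the translation action of $M$ on itself preserves $I_M/I_M^2$ and yields a canonical trivialization $I_M/I_M^2 \cong \mathfrak{n}_{M/F} \otimes_k k[M]$ with $\mathfrak{n}_{M/F} \simeq (\mathfrak{F}/\mathfrak{M})^{*}$. Under this trivialization $\delta$ becomes a $k$-linear map $\phi:\mathfrak{n}_{M/F}\to k[M]$. Since $k[\mathfrak{F}/\mathfrak{M}] = \operatorname{Sym}(\mathfrak{n}_{M/F})$, $\phi$ extends uniquely to an algebra map $\tilde\phi:k[\mathfrak{F}/\mathfrak{M}]\to k[M]$ and hence to a scheme map $\psi: M \to \mathfrak{F}/\mathfrak{M}$.

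Next I would reinterpret each of the three conditions of Lemma~\ref{tlat} as a condition on $\psi$. Condition~(2), combined with the fact that the elements of $\mathfrak{n}_{M/F}$ are primitive for the coaddition on $\mathfrak{F}/\mathfrak{M}$, forces $\tilde\phi$ to intertwine the two coadditions, making $\psi$ a homomorphism of additive group schemes. Condition~(3) analogously expresses the fact that $\psi$ respects the $\V$-coactions, so that $\psi$ is a morphism of schemes of $\V$-modules; condition~(1) is then automatic once additivity is present. This produces a bijection between $\mathcal{T}_F(M)$ and $\Hom_{\V\text{-sch-mod}}(M,\mathfrak{F}/\mathfrak{M})$.

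Finally, to pass from $\Hom_{\V\text{-sch-mod}}(M,\mathfrak{F}/\mathfrak{M})$ to $\Hom_{\V}(M/pM,\mathfrak{F}/\mathfrak{M})$, I would use the fact that $\mathfrak{F}/\mathfrak{M}$ is a vector $\V$-module group with $p$ acting by the $p$-linear operator $\theta$: an additive scheme map into such a vector group is determined by its effect on the cotangent space at the identity, and the $\V$-equivariance condition then translates precisely to ordinary $\V$-linearity of the induced map out of $M/pM$. The main obstacle is the middle step, where one must carefully match the coaction identities of Lemma~\ref{tlat} with the Hopf-algebra and $\V$-module conditions, paying attention to the twist coming from commutativity of $M$ (cf.\ equation~\eqref{nma}) and to the $p$-linearity of the $\V$-action on Lie algebras in positive characteristic.
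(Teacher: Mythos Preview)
Your proposal is correct and follows essentially the same route as the paper: the paper's proof simply invokes Theorem~A.7 of the appendix, whose argument is precisely the one you outline---trivialize $I_M/I_M^2$ as $\mathfrak{n}_{M/F}\otimes_k k[M]$, extend $\delta|_{\mathfrak{n}_{M/F}}$ to an algebra map $k[\mathfrak{F}/\mathfrak{M}]\to k[M]$ giving a scheme morphism $M\to\mathfrak{F}/\mathfrak{M}$, read conditions~(2) and~(3) of Lemma~\ref{tlat} as additivity and $\V$-equivariance, and then observe that such a morphism factors through $M/pM$ because $p$ kills the vector group $\mathfrak{F}/\mathfrak{M}$.
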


\begin{proof}This is just Theorem A.7 of the appendix. 
\end{proof} 

We now turn to the corresponding calculation for restricted commutative nil Lie algebras.

It is convenient to call the lowest power of $\theta$ occurring in an element of $\Or$ its order. For $a\in \Or$ we will write it $\operatorname{ord}(a)$. Recall that $\mathcal{L}_q(T)$ denotes the scheme of restricted Lie subalgebras of codimension $q$ in the restricted Lie algebra $T.$ For the balance of this discussion write $\mathcal{L}_{n,r}$ for $\mathcal{L}_{nr}(\mathfrak{F}/\theta^{nr}\mathfrak{F})$ and write $\F_r$ for $\mathfrak{F}/\theta^{nr}\mathfrak{F}.$ Let $\mathfrak{m}$ be a restricted Lie subalgebra of $\F_r$ thought of as a point of $\lienr.$ The tangent space to $\lienr$ at $\mathfrak{m}$ is the set of $k[\epsilon]$ restricted Lie subalgebras of $\F_r\otimes_kk[\epsilon]=\F_r(\epsilon)$ which are free $k[\epsilon]$ submodules of rank $n(n-1)r$ and which reduce to $\mathfrak{m}$ modulo $\epsilon.$ 
\begin{lemma}\label{tlie}Let $\mathfrak{m}$ be a restricted Lie subalgebra of $\F_r$ of codimension $nr.$ Then the tangent space to $\mathcal{L}_{n,r}$ at $\mathfrak{m}$ is isomorphic to $$\Hom_{\Or}(\mathfrak{m}/\theta \mathfrak{m}, \F_r/\mathfrak{m}).$$  

\end{lemma}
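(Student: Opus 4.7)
The plan is to mimic Proposition \ref{tanl}, equivalently Theorem A.7 of the appendix, replacing the pair $(\V,p)$ with $(\Or,\theta)$: the scheme $\lienr$ sits inside the ambient Grassmannian of $k$-subspaces of $\F_r$ of codimension $nr$, cut out by the $\theta$-stability condition, and the tangent space computation proceeds by first identifying the Grassmannian tangent and then restricting by the $\Or$-closure. I would classify free $k[\epsilon]$-summands $\tilde{\mathfrak{m}}\subseteq \F_r\otimes_k k[\epsilon]$ of the correct $k[\epsilon]$-rank that reduce to $\mathfrak{m}$ modulo $\epsilon$: by standard Grassmannian deformation theory these are bijectively parametrized by $\delta\in\Hom_k(\mathfrak{m},\F_r/\mathfrak{m})$, obtained by picking any $k$-linear lift $\phi:\mathfrak{m}\to\F_r$ of $\delta$ and letting $\tilde{\mathfrak{m}}_\delta$ be the $k[\epsilon]$-span of $\{m+\epsilon\phi(m):m\in\mathfrak{m}\}$.

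Next, I would impose the $\Or$-stability condition $\theta(\tilde{\mathfrak{m}}_\delta)\subseteq \tilde{\mathfrak{m}}_\delta$, with $\theta$ extended to $\F_r\otimes_k k[\epsilon]$ as dictated by the functor-of-points definition of $\lienr$. Picking an $\Or$-generating set $\{m_l\}$ of $\mathfrak{m}$ and writing $\theta m_l=\sum_{l'}y_{l l'}m_{l'}$, one applies $\theta$ to the generators $m_l+\epsilon\phi(m_l)$ and demands the result lie in $\tilde{\mathfrak{m}}_\delta$. Modulo $\epsilon^2$ this yields an identity in $\F_r/\mathfrak{m}$ whose two vanishing pieces read (i) $\delta\circ\theta=\overline{\theta}\circ\delta$, i.e.\ equivariance for the left $\Or$-action, and (ii) $\delta(\theta\mathfrak{m})=0$. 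Together they say precisely that $\delta$ descends to an $\Or$-linear morphism $\mathfrak{m}/\theta\mathfrak{m}\to\F_r/\mathfrak{m}$, and the construction is easily reversed, giving
\[
T_{\mathfrak{m}}\lienr \;\cong\; \Hom_{\Or}(\mathfrak{m}/\theta\mathfrak{m},\F_r/\mathfrak{m}).
\]

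The hard part will be pinning down the correct extension of $\theta$ to $\F_r\otimes_k k[\epsilon]$. Because $\theta$ is $p$-semilinear over $k$, the naive $k[\epsilon]$-linear and $k[\epsilon]$-semilinear extensions give different candidate tangent spaces, and only the one built into the scheme-theoretic functor of points of $\lienr$ produces both conditions (i) and (ii) simultaneously; this is the analogue of the care required in Theorem A.6--A.7 to encode $\V$-stability as compatibility of the classifying map with the coaction for scalar multiplication by $\V$. Once that subtlety is settled, the argument becomes a direct $p$-linear transcription of the Hopf-ideal/coaction bookkeeping of the appendix, with $\theta\mathfrak{m}$ playing the role that $pM$ plays in Proposition \ref{tanl} and $\Or$-linearity replacing $\V$-linearity throughout.
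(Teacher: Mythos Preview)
Your overall strategy---classify free $k[\epsilon]$-liftings via Grassmannian deformation theory, then impose $\theta$-stability---matches the paper's, but your expected outcome is wrong and the paper's argument is far shorter than you anticipate.

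The paper does not invoke any of the Hopf-ideal/coaction machinery of the appendix. It simply chooses a $k$-linear section $\tilde\sigma(x)=x+\sigma_1(x)\epsilon$ and observes that, because $\theta$ is $p$-semilinear and $\epsilon^p=0$ in $k[\epsilon]$,
\[
\theta\bigl(x+\sigma_1(x)\epsilon\bigr)=\theta(x)+\epsilon^p\,\theta(\sigma_1(x))=\theta(x).
\]
Thus the restricted $p$-th power on $\F_r\otimes_k k[\epsilon]$ annihilates the $\epsilon$-part entirely. Requiring $\theta(x)\in\tilde{\mathfrak m}$ then forces exactly one condition: $\sigma_1(\theta x)\in\mathfrak m$, i.e.\ $\sigma(\theta\mathfrak m)=0$. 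That is your condition (ii), and \emph{only} your condition (ii).

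Your anticipated condition (i), $\delta\circ\theta=\overline\theta\circ\delta$, does not arise. It would come from a $k[\epsilon]$-\emph{linear} extension $\theta(x+y\epsilon)=\theta(x)+\theta(y)\epsilon$, but that is not the $p$-th power operation on the restricted Lie algebra $\F_r\otimes_k k[\epsilon]$; the correct functor-of-points extension is the $p$-semilinear one, which kills $\epsilon$. So your hope that ``the right extension produces both (i) and (ii)'' is unfounded: neither candidate extension yields both, and the correct one yields only (ii).

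Consequently the tangent space the paper's proof actually produces is $\Hom_k(\mathfrak m/\theta\mathfrak m,\F_r/\mathfrak m)$, parametrized by arbitrary $k$-linear maps vanishing on $\theta\mathfrak m$---the proof says so explicitly in its final sentence. The subscript $\Or$ in the lemma's displayed formula does not match what the proof establishes; do not contort your argument to manufacture an $\Or$-equivariance condition that the deformation problem does not impose.
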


\begin{proof} Let $\tilde{\mathfrak{m}}$ be a $k[\epsilon]$ subalgebra of $\F_r(\epsilon)$ lifting $\mathfrak{m}.$ Then there is a $k$-vector space section to the map $\tilde{\mathfrak{m}}\mapsto \tilde{\mathfrak{m}}/\epsilon \tilde{\mathfrak{m}}=\mathfrak{m}.$ Choose one and call it $\tilde{\sigma}.$ Then there is a map $\sigma_1 \mathfrak{m}\mapsto \F_r$ so that $\tilde{\sigma}(x)=x+\sigma_1(x)\epsilon.$ Then $k[\epsilon]$-freeness of $\tilde{\mathfrak{m}}$ means that $\tilde{\sigma}$ applied to a $k$-basis will yeild a $k[\epsilon]$-basis. In particular $\tilde{\sigma}(\mathfrak{m})$ and $\epsilon \mathfrak{m}$ span $\tilde{\mathfrak{m}}$ over $k$ and the class of $\sigma_1$ modulo $\mathfrak{m}$ is independent of the choice of $\tilde{\sigma}.$ Write $\sigma$ for the composition of $\sigma_1$ with the surjection $\F_r\mapsto \F_r/\mathfrak{m}.$ Notice that since $\theta$ is $p$-semilinear, $\theta(\epsilon y)=(\epsilon)^p\theta  y=0.$ It follows that $\sigma$ vanishes on $\theta \mathfrak{m}.$ Conversely any $k$-linear map, $\sigma:\mathfrak{m}/\theta \mathfrak{m}\rightarrow \F_r/\mathfrak{m}$ may be used to construct a section $\tilde{\sigma}$ and a restricted Lie subalgebra of $\F_r(\epsilon).$    
\end{proof}

Now by Proposition 5.3 The tangent space to the lattice $M$ in $F_{nr}$ is the set $\Hom_{\V}(M/pM, \mathfrak{f}/\mathfrak{m})$. Let $J$ be the ideal defining $M$. In the appendix it is proven that this is the same as the set of $k[M]$-maps $\delta: J/J^2\rightarrow k[M]$ and that these are in turn uniquely determined by their restrictions to $\mathfrak{n}_{F/M}$ and consequently $\mathcal{T}_F(M)$ can be thought of as the vector space of maps $\delta:\mathfrak{n}_{F/M}\rightarrow k[M]$ satisfying (2) and (3) of Lemma 5.1 (A.9 and A.10 of the appendix). Write $\mathcal{T}^0$ for the space of maps in $\Hom_k(\mathfrak{n}_{F/M}, k[L])$ satisfying A.9 and A.10. These maps will be referred to as classifying maps and they will be said to classify the deformation class corresponding to them.

Suppose that $G=\Spec (A)$ is a smooth connected group scheme flat over $\Spec (R).$ Let $e:A\mapsto R$ be the morphism of rings corresponding to the identity section and let $\fm_G=\ker (e).$ Let $H$ be a closed subgroupscheme smooth over $\Spec (R)$ and suppose that $J$ is the ideal defining it. Then the Lie algebra of $G$ is the linear dual of $\fm_G/\fm_G^2.$ If $\fm_H$ is the ideal defining the identity section in $H$ then the Lie algebra of $H$ is the dual of $\fm_H/\fm_H^2.$ On the other hand $\fm_H=\fm_G/J$ and so $\fm_H/\fm_H^2=\fm_G/(\fm_G^2+J).$ There is an exact sequence:
\begin{equation}\label{neq}
0\mapsto J/J\cap \fm_G^2 \mapsto \fm_G/\fm_G^2 \mapsto \fm_H/(\fm_H^2)\mapsto 0. 
\end{equation}

Hence the element $\eta \in  (\fm_G/\fm_G^2)^*$ is in the Lie algebra of $H$ if and only if $\eta(J/J\cap \fm_G^2)=0.$ Finally we may take the linear dual (over $R$) of the exact sequence \ref{neq}. The duals of the last two terms are isomorphic to the Lie algebras of $G$ and $H$ respectively and so we see that $J/J\cap \fm_G^2$ is canonically isomorphic to $(\mathfrak{g}/\mathfrak{h})^*$ the linear dual of the quotient of the Lie algebra of $G$ by the Lie algebra of $H.$ That is $J/J\cap \fm_G^2$ is canonically isomorphic to the space of invariant sections in the normal bundle $J/J^2.$ This is the space that was written $\mathfrak{n}$ in a previous discussion (see e.g. Proposition \ref{tanl}). When $H$ is normal and connected, as is the case for a lattice in $F$,then the coordinate ring of $G/H$ is, via composition with the projection a subalgebra of $k[G]$ and so if $\mathfrak{m}_0$ is the ideal defining the identity in $G/H$ there is a natural inclusion $\mathfrak{m}_0/\mathfrak{m}_0^2\subseteq J/J^2$ and in fact $\mathfrak{m}_0$ generates $J$. This inclusion identifies $\mathfrak{m}_0/\mathfrak{m}_0^2$ with the invariant sections in $J/J^2$. Thus  $\mathfrak{m}_0/\mathfrak{m}_0^2$ is naturally isomorphic to the invariant sections in $J/J^2$ and also to $(\mathfrak{g}/\mathfrak{h})^*$ the linear dual of the Lie algebra of $G/H$. We will henceforth routinely identify these three vector spaces.

We now compute the differential of the algebraic morphism corresponding to the Lie algebra functor. For the definition of the classifying map of a deformation see Proposition A.5 of the appendix. It is a map from $J/J^2$ to $k[H]$. Suppose $\Delta \in \mathfrak{h}$ the Lie algebra of $H$. Regard it as a tangent vector at the origin. Then consider the composition $\Delta \circ \delta.$ This is a map from $J/J^2$ to $k$ and it is uniquely determined by its restriction to $\mathfrak{m}_0/\mathfrak{m}_0^2\subseteq J/J^2$. Hence $\Delta \circ \delta$ may be viewed as an element of $(\mathfrak{m}_0/\mathfrak{m}_0^2)^*$. This however is the Lie algebra of $G/H$. Hence the assignment $\Delta \mapsto \Delta \circ \delta $ sends the $\Delta \in \mathfrak{h}$ to an element of $\mathfrak{g}/\mathfrak{h}$. We apply this to the case $G=F_{nr},\; H=M$ where $M$ is a lattice of corank $nr$ in $F_{nr}$. In this case write $\mathfrak{n}_{F/M}$ for the space of invariant sections in $J/J^2$.

\begin{lemma}Let $M\subseteq F_{nr}$ be a lattice of codimension $nr$. Suppose that $J$ is the ideal defining $M$. Let $\tilde{J}$ be an ideal determining $\widetilde{M}$ the infinitesimal deformation of the lattice $M$ determined by the classifying map $\delta:J/J^2\rightarrow k[M]$. Then if $\Delta \circ \delta=0$ for each $\Delta \in \mathfrak{m}$ the Lie algebra of $M$, the infinitesimal deformation determined by the ideal $\tilde{J}$ is the trivial one. 
\end{lemma}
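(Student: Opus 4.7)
The plan is to translate the hypothesis into the assertion that the Lie algebra deformation corresponding to $\widetilde{M}$ is trivial, and then to deduce that $\widetilde{M}$ itself is the trivial deformation of $M$.

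First, by the discussion immediately preceding this lemma, the differential at $M$ of the Lie algebra functor $\Lrn \to \lienr$ sends the tangent vector corresponding to $\delta$ to the $k$-linear map $\sigma_\delta : \mathfrak{m} \to \mathfrak{F}/\mathfrak{m}$ given by $\sigma_\delta(\Delta) = \Delta \circ \delta$. By Lemma~\ref{tlie}, this map (which automatically factors through $\mathfrak{m}/\theta \mathfrak{m}$) is precisely the classifying map of the infinitesimal deformation $\tilde{\mathfrak{m}}$ of $\mathfrak{m}$ coming from $\widetilde{M}$. Hence the hypothesis $\Delta \circ \delta = 0$ for all $\Delta \in \mathfrak{m}$ is equivalent to $\sigma_\delta = 0$, which says $\tilde{\mathfrak{m}} = \mathfrak{m} \otimes_k k[\epsilon]$ is the trivial deformation inside $\lienr$.

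Next I would argue that this forces $\widetilde{M}$ itself to be trivial. Both $\widetilde{M}$ and $M \times_k \Spec(k[\epsilon])$ are smooth, flat, $\V$-stable $k[\epsilon]$-subgroup schemes of $F_{nr}(\epsilon)$ reducing to $M$ modulo $\epsilon$, and by the previous step they share the same Lie subalgebra $\mathfrak{m} \otimes_k k[\epsilon] \subseteq \mathfrak{F}(\epsilon)$. Since a smooth closed $\V$-stable subgroup of the smooth commutative unipotent group $F_{nr}(\epsilon)$ is determined by its Lie subalgebra, the two subgroups coincide, whence $\tilde{J} = J[\epsilon]$.

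The main obstacle is the last implication, since in characteristic $p$ one cannot appeal to the exponential map. I would argue directly using the three compatibility conditions of Lemma~\ref{tlat}: condition (2) shows that $\delta(n)$ is primitive in the Hopf algebra $k[M]$ for each invariant section $n \in \mathfrak{n}_{F/M}$; the hypothesis that $\Delta(\delta(n)) = 0$ for every $\Delta \in \mathfrak{m}$ says that $\delta(n)$ has vanishing linear part, i.e., lies in the square of the identity ideal of $k[M]$; and condition (3) forces $\delta$ to be $\V$-equivariant. A direct inspection, using that $M$ is a $\V$-submodule of the truncated Witt-vector group $F_{nr}$, shows that any primitive element lying in the square of the identity ideal of $k[M]$ which is $\V$-equivariant in the sense of condition (3) must vanish. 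Therefore $\delta$ vanishes on $\mathfrak{n}_{F/M}$, and since $\delta$ is determined by its restriction to invariant sections, $\delta = 0$ and thus $\tilde{J} = J[\epsilon]$.
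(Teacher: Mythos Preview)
Your third paragraph is essentially the paper's proof, and it is correct. The paper argues directly: the hypothesis $\Delta\circ\delta=0$ for all $\Delta\in\mathfrak{m}$ says exactly that $\delta(\mathfrak{n}_{F/M})\subseteq\mathfrak{m}_0^2$ (the square of the augmentation ideal of $k[M]$); on the other hand, conditions (2) and (3) of Lemma~\ref{tlat} force each $\delta(n)$ for $n\in\mathfrak{n}_{F/M}$ to be an additive $\V$-module map $M\to k$; and the only such map lying in $\mathfrak{m}_0^2$ is zero. Since $\delta$ is determined by its restriction to $\mathfrak{n}_{F/M}$, it vanishes and $\tilde J=J[\epsilon]$.

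Your first two paragraphs are an unnecessary detour. The interpretation in paragraph~1 is fine but adds nothing, and you yourself correctly flag that paragraph~2 fails in characteristic $p$ (e.g.\ Frobenius kernels have trivial Lie algebra). The paper never passes through the statement ``same Lie subalgebra implies same subgroup''; it goes straight to the argument you give in paragraph~3. So: drop paragraphs~1--2 and keep only the direct argument.
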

\begin{proof}To say that $\Delta \circ\delta=0$ for each element of $\mathfrak{m}$ means that $\delta(\mathfrak{n}_{F/M}\subseteq \mathfrak{m}_0^2$ where $\mathfrak{m}_0$ is the ideal defining $0$ in $M$. On the other hand the $\delta(\mathfrak{n}_{F/M})$ consists of additive $\V$-module maps from $M$ to $k$. The set of such maps which lie in $\mathfrak{m}_0^2$ is the $0$ vector space. But then by the proof of Proposition A.1, this means that $\widetilde{J}$ is generated by $J$ in $\ke[\widetilde{F}_{nr}]$. That is it is the trivial deformation. 
\end{proof}

\begin{proposition}Let $\delta \in \mathcal{T}^0$ be the classifying map of the lattice deformation $\widetilde{M}$ defined by the ideal $\widetilde{J}_{\delta}$. (See Proposition A.1, the appendix.) Then the $k[\epsilon]$-Lie algebra of $\widetilde{M}$ is a $k[\epsilon]$-free restricted Lie subalgebra of $\F_r$ of codimension $nr$ lying over $\mathfrak{m}$ the Lie algebra of $M$ and corresponding under Lemma 5.4 to the map $d\delta:\mathfrak{m}\rightarrow \F_r/\mathfrak{m}$ given by the formula $d\delta (\Delta)=-  \Delta \circ \delta$. Furthermore the assignment sending $\delta$ to $d\delta$ is injective. 
\end{proposition}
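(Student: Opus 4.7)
The plan is to compute the $k[\epsilon]$-Lie algebra of $\widetilde{M}$ directly from the generators of $\widetilde{J}_{\delta}$, identify the resulting section through Lemma~\ref{tlie}, and then reduce injectivity to the lemma immediately preceding the statement.

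First I would observe that the Lie algebra of $\widetilde{M}$ consists of $k[\epsilon]$-tangent vectors at the identity of $A[\epsilon]/\widetilde{J}_{\delta}$, that is, $k[\epsilon]$-derivations $\tilde{\Delta}$ at the identity section which vanish on $\widetilde{J}_{\delta}$. Writing such a derivation as $\tilde{\Delta} = \Delta + \Delta'\epsilon$ with $\Delta,\Delta'$ tangent vectors at the identity of $F_{nr}$, and applying it to a typical generator $u + v\epsilon$ of $\widetilde{J}_{\delta}$ (so $u \in J$ and $v \equiv \delta(u) \pmod{J^2}$), one obtains
\begin{equation*}
\tilde{\Delta}(u + v\epsilon) = \Delta(u) + \bigl(\Delta(v) + \Delta'(u)\bigr)\epsilon.
\end{equation*}
Requiring this to vanish on every generator forces $\Delta(J) = 0$, so $\Delta \in \mathfrak{m}$, together with $\Delta'(u) = -\Delta(v) = -(\Delta\circ\delta)(u + J^2)$. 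Through the identifications of Proposition~A.5 and Theorem~A.6 of the appendix, $\delta$ is determined by its restriction to the invariant sections $\mathfrak{n}_{F/M} \subseteq J/J^2$, and $\mathfrak{n}_{F/M}$ is canonically the linear dual of $\F_r/\mathfrak{m}$; hence, as spelled out in the paragraph immediately preceding the proposition, $\Delta\circ\delta$ is a well-defined element of $\F_r/\mathfrak{m}$ for each $\Delta \in \mathfrak{m}$. Thus every $\Delta \in \mathfrak{m}$ lifts, uniquely modulo $\mathfrak{m}\epsilon$, so the Lie algebra of $\widetilde{M}$ is $k[\epsilon]$-free of rank $n(n-1)r$ as a $k[\epsilon]$-submodule of $\F_r(\epsilon)$, reducing to $\mathfrak{m}$ modulo $\epsilon$; its $\theta$-stability comes from condition~(3) of Lemma~\ref{tlat}, which encodes precisely the $\V$-stability of $\widetilde{J}_{\delta}$. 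Reading off the section $\tilde{\sigma}(\Delta) = \Delta + \Delta'\epsilon$ against the parametrization of Lemma~\ref{tlie} yields $\sigma(\Delta) = \Delta' \bmod \mathfrak{m} = -\Delta\circ\delta$, which is exactly the asserted formula for $d\delta$.

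For injectivity, suppose $d\delta = 0$, so $\Delta\circ\delta = 0$ in $\F_r/\mathfrak{m}$ for every $\Delta \in \mathfrak{m}$. The lemma immediately preceding this proposition then gives that $\widetilde{J}_{\delta}$ is the trivial infinitesimal deformation of $M$, and the bijection of Lemma~\ref{tlat} between $\mathcal{T}^0$ and lattice deformations forces $\delta = 0$. The only delicate step in the argument is aligning $\delta$, a map on the conormal bundle $J/J^2$, with the Lie-theoretic expression $-\Delta\circ\delta \in \F_r/\mathfrak{m}$; this compatibility is supplied wholesale by the canonical identifications of the appendix, so once they are invoked the computation above is entirely mechanical.
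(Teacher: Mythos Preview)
Your proof is correct and follows essentially the same computation as the paper: both write a $k[\epsilon]$-derivation as $\Delta + \Delta'\epsilon$, apply it to generators $u + v\epsilon$ of $\widetilde{J}_{\delta}$, read off $\Delta'(u) = -\Delta(\delta(u))$ to identify $d\delta$, and then invoke the preceding lemma for injectivity. One small remark: the $\theta$-stability of the Lie algebra of $\widetilde{M}$ follows already from $\widetilde{M}$ being a subgroup scheme (the restricted structure is the $p$'th power on invariant vector fields), not specifically from the $\V$-module condition~(3) of Lemma~\ref{tlat}; the paper verifies this directly by computing $\theta(m + n\epsilon) = \theta m$ since $\epsilon^p = 0$.
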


\begin{proof} Suppose that $\delta: J/J^2 \mapsto k[F_{nr}]/J$ is the classifying map determining $\widetilde{M}$. Then $\delta$ satisfies equations (2) and (3) of Lemma 5.1. An element of the Lie algebra of $\widetilde{F}_{nr}$ is a derivation of $\ke [\widetilde{F}_{nr}]$ in $\ke$. A standard computation shows that this is determined by a derivation of $k[F_{nr}]$ in $k[\epsilon]$. Such a derivation is a map $\widetilde{\Delta}(a)=\Delta_1(a)+\Delta_2(a)\epsilon$ where $\Delta_1$ and $\Delta_2$ are derivations of $k[F_{nr}]$ in $k$ and $k[\epsilon]$ is understood as the fiber of the structure sheaf of $\widetilde{F}_{  nr}$ at $e$. Then $\widetilde{\Delta}$ will be in the tangent space to $\widetilde{M}$ only if $\widetilde{\Delta}(\tilde{J})=0$. Now $\widetilde{\Delta}(a+b\epsilon)=\Delta_1(a)+(\Delta_2(a)+\delta_1(b))\epsilon$. Now $\Delta_1(J)=0$ because $\Delta_1$ is in the Lie algebra of $M$. The elements $a+b\epsilon$ with $a\in J \text{and}\; b\equiv \delta(a) \operatorname{mod}J$ generate $\tilde{J}$. Hence $\widetilde{\Delta}(a+b\epsilon)$ must be zero for each $a+b\epsilon$ such that $b\equiv \delta(a) \operatorname{mod}J$. This means that $\delta_1(a)=0$ for all $a\in J$ and that $\Delta_2(a)+\Delta_1(b)=0$. Now if $b$ is replaced by $b+x$ with $x\in J$ then $\Delta_1(b+x)=\Delta_1(b)+\Delta_1(x)=\Delta_1(b)$ since $\Delta_1$ is in the Lie algebra of $M$. Hence $\Delta_1(b)$ depends only on the residue class of $b$ modulo $J$, that is to say on $\delta(a)$ if $a+b\epsilon\in \tilde{J}$. That is $\Delta_1+\Delta_2\epsilon$ vanishes on $\tilde{J}$ only if $\Delta_2(a)=-\Delta_1(\delta(a))$. That is to say $\Delta_1+\Delta_2\epsilon$ is in the Lie algebra of $\widetilde{M}$ only if $\Delta_2=-\Delta_1\circ \delta$. Write $\sigma(\Delta_1)=d\delta(\Delta_1)=\Delta \circ \delta$. Then the Lie subalgebra of $\F_r(\epsilon)$ corresponding to $\sigma$ under Lemma 5.3 is just precisely the same subalgebra of $\F_r(\epsilon)$. This means that $d:\mathcal{T}^0\rightarrow \Hom_{\Or}(\mathfrak{m}, \F_r/\mathfrak{m})$ is the map of tangent spaces realizing the map of tangent spaces arising from the Lie algebra functor from $\Lrn$ to $\mathfrak{L}_{n,r}$. Two points remain. The first is whether $d\delta$ vanishes on $\theta \mathfrak{m}$. The second is the issue of injectivity. The latter is just Lemma 5.4. Hence all we must show is that $d\delta (\theta \mathfrak{m})=(0)$. This just follows from the restricted Lie algebra structure. Suppose $m\in \mathfrak{m}$. Suppose that $m+n\epsilon$ lies in $\widetilde{m}$a $\ke$ Lie algebra lying above $\mathfrak{m}$. Then $\theta(m+n\epsilon)$ must lie in $\mathfrak{m}$ as well. But $\theta(m+n\epsilon)=\theta(m)+\theta(n\epsilon)=\theta{m}+\epsilon^p\theta(n)=m$. Thus $\theta m\in\widetilde{m}$ for each $m\in \mathfrak{m}$. But this means that the map $\sigma$ of Lemma 5.3 vanishes on $\theta \mathfrak{m}$.     

\end{proof}

\begin{lemma}\label{orb}Let $(s_1, \dots ,s_n)$ be a nonincreasing sequence of nonnegative integers. Let $\mathfrak{L}$ be the $\Or$-lattice spanned by the vectors
\[\{(\theta^{s_1}, 0, \dots, 0),(0, \theta^{s_2},0, \dots ,0),\dots ,(0,\dots ,0,\theta^{s_n})\}.\] Then $A=(a_{i,j})\in \GL(n,\Or)$ is a matrix such that $\mathfrak{L}A=\mathfrak{L}$ if and only if $\operatorname{ord}(a_{i,j})\geq s_j-s_i$ for each pair $i,j$ such that $s_j>s_i.$ 
\end{lemma}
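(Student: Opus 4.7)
The plan is to reduce the condition $\mathfrak{L}A=\mathfrak{L}$ to the condition $\mathfrak{L}A\subseteq \mathfrak{L}$ (using that $A$ is invertible, so $\F/\mathfrak{L}A\simeq \F/\mathfrak{L}$ have equal finite length, whence the inclusion forces equality), and then to test the inclusion basis-by-basis on the standard generators $\fu_i=(0,\dots,0,\theta^{s_i},0,\dots,0)$ of $\mathfrak{L}$.

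First I would write $\mathfrak{L}$ intrinsically: since the ideals of $\Or$ are two-sided, $\Or\theta^{s}=\theta^{s}\Or=\{x\in\Or:\operatorname{ord}(x)\geq s\}$ for every $s\geq 0$, so
\[
\mathfrak{L}=\{(b_1,\dots,b_n)\in\F:\operatorname{ord}(b_j)\geq s_j,\ j=1,\dots,n\}.
\]
Next, by definition of the right action, the $j$-th entry of $\fu_iA$ is $\theta^{s_i}a_{i,j}$. Writing $a_{i,j}=\sum_{k\geq 0}c_{k}\theta^{k}$ and pushing $\theta^{s_i}$ through using $\theta\cdot c=c^{p}\theta$, one obtains
\[
\theta^{s_i}a_{i,j}=\sum_{k\geq 0}c_{k}^{p^{s_i}}\theta^{s_i+k},
\]
whose order equals $s_i+\operatorname{ord}(a_{i,j})$ because $k\mapsto k^{p^{s_i}}$ is a bijection on the perfect field $k$ (so no cancellation occurs). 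The requirement that this term lie in $\Or\theta^{s_j}$, i.e. have order $\geq s_j$, is then precisely $\operatorname{ord}(a_{i,j})\geq s_j-s_i$. This bound is automatic when $s_j\leq s_i$, so the only effective constraints are those with $s_j>s_i$.

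Carrying out this check for every basis vector $\fu_i$ shows that $\mathfrak{L}A\subseteq\mathfrak{L}$ if and only if $\operatorname{ord}(a_{i,j})\geq s_j-s_i$ for every pair with $s_j>s_i$. Combining with the length argument of the first paragraph gives the stated equivalence.

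The step I expect to require the most care is the order computation $\operatorname{ord}(\theta^{s_i}a_{i,j})=s_i+\operatorname{ord}(a_{i,j})$: one must invoke the perfectness of $k$ (and the two-sidedness of the ideals $\theta^{s}\Or=\Or\theta^{s}$) to rule out accidental vanishing of leading coefficients after the $p$-power twist, and to identify the defining condition $b_j\in\Or\theta^{s_j}$ with the simple order inequality. Once this is in hand, the rest of the argument is linear bookkeeping together with the length/codimension comparison to promote the inclusion to equality.
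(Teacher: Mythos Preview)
Your proposal is correct and follows essentially the same approach as the paper: both reduce to the entrywise order condition $\operatorname{ord}(\theta^{s_i}a_{i,j})\geq s_j$, which becomes $\operatorname{ord}(a_{i,j})\geq s_j-s_i$ via the identity $\operatorname{ord}(\theta^{s}x)=s+\operatorname{ord}(x)$ (valid because $k$ is perfect). The paper phrases the inclusion $\mathfrak{L}A\subseteq\mathfrak{L}$ as the existence of $C\in M_n(\Or)$ with $\Theta A=C\Theta$ and reads off the same inequality from $\theta^{s_i}a_{i,j}=c_{i,j}\theta^{s_j}$; your length argument promoting inclusion to equality is a clean way of making explicit what the paper leaves to the phrase ``clearly necessary and sufficient when both matrices are assumed to be in $\GL(n,\Or)$.''
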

\begin{proof} Write the basis in the statement $\{\theta^{s_1}e_1, \theta^{s_2}e_2, \dots ,\theta^{s_n}e_n\}$. Then $\mathfrak{L}A$ has basis $\{ \theta^{s_i}e_i A:\, i=1,\dots,n\}$ and this must be     equal to $C\Theta$ where  $C\in M_n(\Or)$ and $\Theta$ is the diagonal matrix with entries $\theta^{s_i}.$ One obtains $\theta^{s_i}a_{i,j}= c_{i,j}\theta^{s_j}$. Taking orders one obtains	           $\operatorname{ord}(a_{i,j})=\operatorname{ord}(c_{i,j})+s_j-s_i.$ Since the $c_{i,j}$ can be arbitrary elements of nonnegative order,this is only a nonvacuous condition when $j>i$ and the difference is positive and it is clearly necessary and sufficient when both matrices are assumed to be in $\GL(n,\Or).$
\end{proof}
The action of $\GL(n,\Or)$ on $\F$ is linear and so it reduces to an algebraic action of the finite dimensional algebraic group $\GL(n,\Or/\theta^{nr}\Or)$ on the vector space $\F/\theta^{nr}\F.$ The $\Or$-lattices of codimension $nr$ in $\F$ may be regarded as the $\theta$-fixed subspaces of $\F/\theta^{nr}\F$ of codimension $nr.$ By our observations concerning the modules $\F/\mathfrak{L},$ each of these orbits is the orbit of a lattice, which we will write $\mathfrak{L}(r_1, \dots ,r_n),$ that is spanned by $\theta^{r_1}e_1,\dots ,\theta^{r_n}e_n$ where $r_1\geq r_2 \geq \dots \geq r_n$ and $\sum_i r_i=nr.$ Now the lemma above means that we may compute the dimension of $\GL(n,\Or/\theta^{nr}\Or)$-orbits exactly as in the case of lattice varieties or infinite Grassmannians. Hence without further proof we may state the:
\begin{proposition}\label{dim}
Let $r_1,\dots ,r_n$ be a nonincreasing sequence of natural numbers such that $\sum_i r_i=nr.$ The $\GL(n,\Or/\theta^{nr}\Or)$-stabilizer of $\mathfrak{L}(r_1, \dots ,r_n)$ in $\mathcal{L}_{nr}(\F/\theta^{nr}\F)$ is of dimension $n^3r-\sum_{i<j}(r_i-r_j)$ and the dimension of its $\GL(n,\Or/\theta^{nr}\Or)$-orbit is $\sum_{i<j}(r_i-r_j)$.
\end{proposition}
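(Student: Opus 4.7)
The plan is to derive Proposition \ref{dim} essentially as a direct consequence of Lemma \ref{orb}, with a little bookkeeping about how the order condition cuts out a linear subvariety of the finite-dimensional group $\GL(n,\Or/\theta^{nr}\Or)$. First I would compute the dimension of the ambient group. An element of $M_n(\Or/\theta^{nr}\Or)$ is an $n\times n$ matrix whose entries are truncated Ore polynomials of length $nr$, so $M_n(\Or/\theta^{nr}\Or)$ is affine of dimension $n^2\cdot nr=n^3r$; and the function $c_0(A)=\det A_0$ introduced above cuts out $\GL(n,\Or/\theta^{nr}\Or)$ as a principal open subset, hence $\dim\GL(n,\Or/\theta^{nr}\Or)=n^3r$ and the group is smooth and irreducible over $k$.

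Next I would transfer Lemma \ref{orb} to the truncation. Since $r_1+\dots+r_n=nr$ and all $r_i\geq 0$, each $r_i\leq nr$, so $\mathfrak{L}(r_1,\dots,r_n)$ contains $\theta^{nr}\F$ and descends to a well-defined subspace of $\F/\theta^{nr}\F$; the induced $\GL(n,\Or/\theta^{nr}\Or)$-action preserves this subspace exactly when the $\GL(n,\Or)$-action preserves $\mathfrak{L}(r_1,\dots,r_n)$, because the order conditions $\operatorname{ord}(a_{i,j})\geq r_j-r_i$ of Lemma \ref{orb} involve only $r_j-r_i\leq nr$ and are therefore well-posed modulo $\theta^{nr}$. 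Thus the stabilizer subscheme of $\mathfrak{L}(r_1,\dots,r_n)$ in $\GL(n,\Or/\theta^{nr}\Or)$ is the closed subvariety defined by the conditions $\operatorname{ord}(a_{i,j})\geq r_j-r_i$ for every pair $(i,j)$ with $r_j>r_i$.

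Then I would simply count codimension. Each entry $a_{i,j}$ contributes an $nr$-dimensional space of coefficients, and the condition $\operatorname{ord}(a_{i,j})\geq r_j-r_i$ forces the first $r_j-r_i$ of those coefficients to vanish, cutting down the $(i,j)$ coordinate family by exactly $r_j-r_i$ dimensions. These vanishing conditions sit in disjoint coordinate subspaces (one per entry), so they are linearly independent and the codimension of the stabilizer in $\GL(n,\Or/\theta^{nr}\Or)$ is
\[
\sum_{\substack{(i,j)\\ r_j>r_i}}(r_j-r_i)\;=\;\sum_{i<j}(r_i-r_j),
\]
where the last equality uses the nonincreasing convention $r_1\geq\dots\geq r_n$ so that the pairs with $r_j>r_i$ are exactly the pairs $(i,j)$ with $i<j$ and $r_i>r_j$. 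Subtracting this codimension from $n^3r$ gives the claimed stabilizer dimension, and invoking the orbit-stabilizer relation for a transitive action of a smooth algebraic group (valid since $\GL(n,\Or/\theta^{nr}\Or)$ is smooth) yields orbit dimension equal to $\sum_{i<j}(r_i-r_j)$.

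The one point that needs care, and is really the only obstacle, is verifying that the set-theoretic stabilizer described by the order inequalities is actually the scheme-theoretic stabilizer (so that its dimension is what one gets from counting independent linear conditions), and that the orbit map is smooth enough that $\dim(\text{orbit})+\dim(\text{stabilizer})=\dim(\text{group})$ holds on the nose. Both follow from general principles once one notices that the stabilizer is the intersection of $\GL(n,\Or/\theta^{nr}\Or)$ with a linear subspace of $M_n(\Or/\theta^{nr}\Or)$ and that the group acts transitively on its orbit through a group scheme of finite type over a field, but this is the step to write out carefully.
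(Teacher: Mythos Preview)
Your proposal is correct and follows exactly the approach the paper intends: the paper states Proposition~\ref{dim} explicitly ``without further proof'' as a direct consequence of Lemma~\ref{orb}, and you have simply written out the dimension count that the paper leaves implicit. One small wording issue: the sentence ``the pairs with $r_j>r_i$ are exactly the pairs $(i,j)$ with $i<j$ and $r_i>r_j$'' is not literally true as sets of ordered pairs; what you mean is that swapping indices gives a bijection between them under which the summands $r_j-r_i$ and $r_i-r_j$ agree, so the two sums coincide.
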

Recall that in \cite{me2} the morphism from $\Lrn$ to $\mathcal{L}_{nr}(\F/\theta^{nr}\F)$ which sends a lattice to its Lie algebra is called the canonical morphism. Theorem 1 on p.88 of \cite{me2} is equivalent to the fact that it is an algebraic map which is injective on points and tangent vectors. Write $\lambda_r$ for this map. Notice also that scalar matrices with entries in $\V^*$ act trivially on lattices and so the $\SL(n,\V)$ orbits are the same as $\GL(n,\V)$ orbits in $\Lrn.$

We propose to show that $\lambda_r$ induces an isomorphism of tangent spaces. We will do it by computing the tangent space to $\mathcal{L}(nr)$ and then computing the differential of $\lambda_r.$ First we will examine $\mathcal{L}_{nr}$ more precisely. We have seen that a restricted Lie algebra is nothng more than an $\Or$ module and that the restricted commutative Lie subalgebras of the restricted Lie algebra $\mathfrak{F}/\theta^{nr}\mathfrak{F}$ of codimension $nr$ are a projetive algebraic variety. Write $\mathfrak{F}_r$ for $\mathfrak{F}/\theta^{nr}\mathfrak{F}.$ Let us compute the tangent space to the subLie algebra $\mathfrak{M}$ of codimension $nr.$ The functor represented by $\mathcal{L}_{nr}$ assigns to each commutative $k$-algebra $B$ the set of commutative $B$ subLie algebras of $B\otimes _k\mathfrak{F}_r$ which are locally free $B$-submodules of rank $n(n-1)r$ over $\Or.$

Let $\mathfrak{M}$ be such an $\Or$-submodule of $\mathfrak{F}_r.$ That is it is of codimension $nr$ in $\mathfrak{F}_r.$ The tangent space to $\mathfrak{M}$ in $\mathcal{L}_{nr}(\mathfrak{F})$ consists of free rank $n(n-1)r \quad k[\epsilon]$ submodules of $(\mathfrak{F}/\theta^{nr}\mathfrak{F})\otimes_kk[\epsilon]$ which are $\Or$-submodules and which reduce to $\mathfrak{M}$ modulo $\epsilon \mathfrak/\theta^{nr}\mathfrak{F}.$ Let $\tilde{\mathfrak{M}}$ be such a module.

\begin{theorem}
The canonical morphism from $\Lrn$ to $\mathcal{L}_{nr}(\F/\theta^{nr}\F)$ is an isomorphism. The algebraic action of $\SL(n,\V/p^{nr}\V)$ on $F/p^{nr}F$ defines an algebraic morphism, $$\phi_r: \SL(n,\V/p^{nr}\V)\rightarrow \GL(n,\Or/\theta^{nr}\Or)$$ so that the canonical morphism is equivariant.
\end{theorem}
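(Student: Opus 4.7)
The plan is to combine four ingredients already in place: $\lambda_r$ is an algebraic morphism of projective $k$-schemes, it is injective on closed points (\cite{me2}, Theorem~1), its differential is injective (the proposition preceding Lemma~\ref{orb}), and the source $\Lrn$ is a normal irreducible projective variety (Theorem~\ref{M:th}). What then remains is to construct $\phi_r$, to verify equivariance, and to upgrade each of the two injectivity statements to a bijectivity.

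Constructing $\phi_r$ and checking equivariance will be formal. Since the Lie algebra functor is natural and algebraic, the representation of $\SL(n,\V/p^{nr}\V)$ on $F/p^{nr}F$ as $\V/p^{nr}\V$-module automorphisms induces an algebraic representation on $\lie{F/p^{nr}F}=\F/\theta^{nr}\F$ as $\Or/\theta^{nr}\Or$-module automorphisms, and this will be $\phi_r$. Equivariance then follows from naturality: $\lambda_r(\sigma M)=\lie{\sigma M}=\phi_r(\sigma)\lie{M}=\phi_r(\sigma)\lambda_r(M)$ for every $\sigma\in\SL(n,\V/p^{nr}\V)$ and every lattice $M$.

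Surjectivity on closed points I would prove orbit by orbit. Using the structure theorem for finite $\Or$-modules recalled in Section~\ref{CC}, every $\Or$-lattice $\mathfrak{N}$ of codimension $nr$ in $\F/\theta^{nr}\F$ admits a basis of the form $\theta^{r_1}\fu_1,\dots,\theta^{r_n}\fu_n$ with $r_1\geq\dots\geq r_n$ and $\sum_i r_i=nr$, so $\mathfrak{N}$ lies in the $\GL(n,\Or/\theta^{nr}\Or)$-orbit of $\mathfrak{L}(r_1,\dots,r_n)=\bigoplus_i\theta^{r_i}\Or e_i$, and $\mathfrak{L}(r_1,\dots,r_n)=\lambda_r\bigl(\bigoplus_i p^{r_i}\V e_i\bigr)$. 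By equivariance the $\lambda_r$-image of the $\SL(n,\V/p^{nr}\V)$-orbit of the latter is $\phi_r(\SL)$-invariant and contains $\mathfrak{L}(r_1,\dots,r_n)$; combining Proposition~\ref{dim} with the orbit-dimension formula for $\Lrn$ in \cite{me} shows that the two orbits have equal dimension, so the image fills the entire target orbit. Surjectivity of the differential I would handle by a dimension count: Proposition~\ref{tanl} and Lemma~\ref{tlie} identify the tangent spaces at $M$ and $\mathfrak{m}=\lie{M}$ with $\Hom_{\V}(M/pM,\mathfrak{f}/\mathfrak{m})$ and $\Hom_{\Or}(\mathfrak{m}/\theta\mathfrak{m},\F_r/\mathfrak{m})$ respectively; since $M$ and $\mathfrak{m}$ share the same elementary-divisor sequence one has $\dim_k M/pM=\dim_k\mathfrak{m}/\theta\mathfrak{m}$, and since the sources in both Hom groups are killed by $p=\theta$ both sides reduce to $\Hom_k$ of equidimensional spaces into the $p$-torsion of $\F_r/\mathfrak{m}$. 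The already injective $d\lambda_r$ between them is therefore an isomorphism.

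Granted these two surjectivities, $\lambda_r$ becomes a proper bijection with isomorphism differentials at every closed point, hence \'etale, hence an isomorphism of schemes onto $\mathcal{L}_{nr}(\F/\theta^{nr}\F)$. The main obstacle will be the tangent-space comparison, where one must reconcile $\V$-linear and $\Or$-linear Homs between truncated Witt and truncated Ore modules; everything else is either formal naturality of the Lie algebra functor or orbit bookkeeping on the shared elementary-divisor stratification of the two sides.
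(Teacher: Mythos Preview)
Your plan is essentially the paper's own: construct $\phi_r$ by applying the Lie algebra functor to the $\SL(n,\V/p^{nr}\V)$-action on $F/p^{nr}F$, read off equivariance from naturality, and then compare orbit dimensions via Proposition~\ref{dim} to upgrade the point-injectivity of $\lambda_r$ (from \cite{me2}) to a bijection on closed points. The paper then finishes abruptly (``the maximal dimensional orbits are isomorphic; the theorem follows immediately''), implicitly using the injectivity of $d\lambda_r$ established just before Lemma~\ref{orb} together with the dimension equality on the big orbit.

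Your tangent-space step, however, is mishandled. The $\Hom_{\V}$ in Proposition~\ref{tanl} is \emph{not} an abstract $\V$-module Hom; it is Hom in the category of schemes of modules over the ring scheme $\V$ (see the proof and Theorem~A.7 of the appendix), and it does not reduce to $\Hom_k$ into a torsion subspace. On the $\Or$ side the situation is worse for your argument: if you literally impose $\Or$-linearity on a source annihilated by $\theta$, the image is forced into $(\F_r/\mathfrak{m})[\theta]$, whose $k$-dimension is only $\#\{i:r_i>0\}$ --- for the generic type $(nr,0,\dots,0)$ that gives $n-1$ rather than $(n-1)nr$. What the \emph{proof} of Lemma~\ref{tlie} actually shows is that any $k$-linear map $\mathfrak{m}/\theta\mathfrak{m}\to\F_r/\mathfrak{m}$ arises, so the tangent space there is $\Hom_k(\mathfrak{m}/\theta\mathfrak{m},\F_r/\mathfrak{m})$ of dimension $\#\{i:r_i<nr\}\cdot nr$, i.e.\ $(n-1)nr$ on the big orbit and $n^2r$ elsewhere. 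That matches the explicit count in Theorem~A.7, and then your injective $d\lambda_r$ is automatically bijective. So your step (3) reaches the right conclusion, but the justification you wrote (``both sides reduce to $\Hom_k$ into the $p$-torsion'') is wrong and should be replaced by this direct dimension comparison.

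One smaller gap: ``equal orbit dimensions, so the image fills the entire target orbit'' needs an extra word. Equal dimension only gives that the $\phi_r(\SL)$-orbit is \emph{open} in the ambient $\GL(n,\Or/\theta^{nr}\Or)$-orbit; the paper closes this by observing that the image of a morphism of algebraic groups is closed, so the smaller orbit is also closed. An alternative is simply to use that $\lambda_r$ is proper, hence has closed image, and the image already contains a dense open in the dense orbit of $\mathcal{L}_{nr}(\F/\theta^{nr}\F)$.
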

\begin{proof}
The canonical morphism, being a natural transformation of \allowbreak functors, by Yoneda's lemma yields an algebraic morphism. Now the group $\SL(n,\V/p^{nr}\V)$ acts algebraically by automorphisms on $F/p^{nr}F.$ Applying the Lie algebra functor gives a natural map from the group $\SL(n,\V/p^{nr}\V)$ to the automorphisms of the Lie algebra of $F/p^{nr}F.$ These automorphisms are restricted Abelian Lie algebra morphisms. The Lie algebra is $\F/\F\theta^{nr}\simeq (\Or/\Or\theta^{nr})^n$ and the restricted structure is the left $\Or$-module structure.
The left module endomorphisms of $(\Or/\Or\theta^{nr})^n$ are the elements of the matrix algebra $M_n(\Or/\theta^{nr}\Or)$, and the automorphism group is $\GL(n,\Or/\theta^{nr}\Or).$ Hence there is an induced homomorphism:
\[
\phi_r:\SL(n,\V/p^{nr}\V)\rightarrow \GL(n,\Or/\theta^{nr}\Or).
\]
The morphism is induced by the Lie algebra functor. Suppose that $\alpha \in \SL(n,\V/p^{nr}\V)$ is an element such that $\alpha(L)=L'.$ That is, $\phi_r$ defines an action of $\SL(n,\V/p^{nr}\V)$ on $\mathcal{L}_{nr} (\F/\theta^{nr}\F)$ so that the canonical morphism is equivariant. Notice that since it is injective the $\SL(n\V/p^{nr}\V)$-stabilizer of $\lie L$ is equal to the stabilizer of $L$. Consequently, the dimension of the $\GL(n,\Or/\theta^{nr}\Or)$-orbit is greater than or equal to the dimension of the $\SL(n, \V/p^{nr}\V)$-orbit. Observe that both of these orbits are varieties and that the $\GL(n,\Or/\theta^{nr}\Or)$-orbit contains the $\SL(n, \V/p^{nr}\V)$-orbit. We shall attempt to compare the dimensions of these orbits.

Now $F,L$ and $F/L$ are all smooth algebraic groups and so $\lie{F/L}=\lie F/\lie L.$ Then $F/L\simeq \bigoplus_j \V/p^{s_j}\V$ for some nonincreasing sequence $(s_1,\dots,s_n)$. Since the Lie algebra of a product is the product of the corresponding Lie algebras, $\F/\lie L\simeq \bigoplus_j \lie {\V/p^{s_j}\V}.$ But $\lie {\V/p^{s_j}\V}=\Or/\theta^s_j\Or.$ Consequently, $\lie{F/L}=\bigoplus_j\Or/\theta^{s_j}\Or.$
By Proposition~\ref{dim}, the dimension of the $\GL(n,\Or/\theta^{nr}\Or)$-orbit of $\lie L$ is $\sum_{i<j}(s_i-s_j).$ This is the same as the $\SL(n,\V/p^{nr}\V)$-orbit dimension of $L.$ Thus the two orbits are varieties, one contained in the other and both of the same dimension. To complete the proof just note that 
$\phi_r$ is an algebraic morphism of algebraic groups. It follows that its image is a closed subgroup of $\GL(n,\Or).$ Hence the $\SL(n,\V/p^{nr}\V)$-orbit of a point is a closed subset of its $\GL(n,\Or/\theta^{nr}\Or)$-orbit. The one orbit is hence a closed subvariety of the other and of the same dimension and so they are equal. In particular the maximal dimensional orbits are isomorphic. The theorem follows immediately.
\end{proof}
\section{$p$-Linear Nilpotents}\label{pln}
In \cite{george}, Lusztig established an isomorphism between the nilpotent cone under the conjugating action and an open subset of the variety of $k[[t]]$-lattices of codimension $n$ in $k[[t]]^n.$ Since then that parametrization has been understood much more fully. P. Magyar generalized to lattices of codimension $nr$ (see \cite{pm}). Later authors generalized further replacing the nilpotent cone with constructions involving opposite parahorics. (See for example the discussion in \cite{mov}.) In this section we construct a parallel theory. There is a generalization involving representations of $\boldsymbol{\alpha}_{p^n}$ but that would carry us too far afield. We stay with the most elementary case in which the parallel is clear and which shows that $p$-linear algebra must replace ordinary linear algebra. Our construction is dual to the one in \cite{george} and it establishes a more direct correspondence. At the end of the section we give an indication of how to show that this construction is exactly dual to Lusztig's construction. If $V$ is an arbitrary $k$-vector space of dimension $n$, a \emph{$p$-linear} map $\phi:V\rightarrow V$ is an additive map such that $\phi(av)=a^p\phi(v).$ If we fix a basis $\{ e_1, \dots ,e_n\}$ then a $p$-linear map is determined by its values on the basis. That is, $\phi(\sum a_ie_i)=\sum a_i^p\phi(e_i).$ If $\phi(e_i)=\sum b_{i,j}e_j$ then $\phi(\sum a_ie_i)=\sum_i \sum_j a_i^pb_{i,j}e_j.$ Thinking in terms of matrices, $\phi$ has the matrix $(b_{i,j})$ and its action is described by $(a_i)(b_{i,j})=(\sum_i a_i^pb_{i,j}).$ Thus define a twisted action of matrices by the equation:
\begin{equation}\label{pac}
\left(
\begin{matrix}
a_1, \dots, a_n
\end{matrix}
\right)
\circ 
\left(
\begin{matrix}
b_{1,1}& \dots & b_{1,n} \\
\vdots & \ddots & \vdots \\
b_{n,1}& \dots & b_{n,n}
\end{matrix}
\right)
=
\left(
\begin{matrix}
\sum_i a_i^p b_{i,1}, \dots ,\sum_i a_i^p b_{i,n}
\end{matrix}
\right).
\end{equation}
This twisted action identifies each matrix with a $p$-linear map and, as we observed, this represents each $p$-linear map uniquely. Notice that the representation of $\phi$ by a matrix is dependent on the choice of a basis and that the rules for change of basis are not as in the commutative case.

Let $\OOr$ be the  quotient of $\Or$ by the two sided ideal $\theta^n\Or$ and let $\thetao$ denote the residue class of $\theta$ in this ring. Let $\OF$ denote the quotient $\F/{\thetao}^n\F.$ 
If $A\in M_n(k)$ let $A^{[p^j]}$ denote the matrix obtained by raising the entries of $A$ to the $p^j$'th power. If $\phi$ is a $p$-linear map then so is the conjugate $A\phi A^{-1}.$ If $\phi$ is represented by the matrix $B$ then $A\phi A^{-1}$ corresponds to the matrix $A^{[p]}BA^{-1}.$ The action here is ordinary matrix multiplication and not the twisted action above.
Notice that the twisted action and ordinary matrix multiplication act according to the formula;
\[
vA\circ B=v\circ A^{[p]}B.
\]
It is also true that the matrix $B$ represents a $p$-nilpotent transformation if and only if 
\begin{equation}\label{nil}
B^{[p^{n-1}]}B^{[p^{n-2}]}\dots B=0.
\end{equation}
Equation \eqref{nil} defines a subscheme of the space of $n\times n$ matrices. This cone of $p$-nilpotents has a very different set of equations than the set of equations defining the classical nilpotent cone.

Consider the Grassmann variety of $n^2-n$-planes in $\OF.$ The set of $n^2-n$-planes closed under multiplication by $\thetao$ is a closed subscheme of this Grassmannian. Write $Y$ for the variety of $n^2-n$-planes in $\F$ stable under left multiplication by $\thetao.$ Let $Y_0$ denote the open subscheme of $Y$ consisting of $n^2-n$-planes transversal to $V.$ 

Suppose that $\phi:V\rightarrow V$ is a $p$-nilpotent transformation. Then letting $\thetao u=\phi(u)$ one determines an $\OOr$-module structure on $V$ and, because of the nilpotence on a space of dimension $n,$ an $\OOr$-structure. Write $V_{\phi}$ for this $\OOr$-module. Consider the set of elements: 

\begin{equation}\label{gen}
R(\phi)=\{{\thetao}^{a+b}\otimes \phi^c(m)-{\thetao}^a\otimes \phi^{b+c}(m):m\in V,\; a,b,c\in \mathbb{Z},\;a,b,c\geq 0\}.
\end{equation}

Then let $\mathfrak{L}(\phi)$ be the $k$-span of $R(\phi).$

Recall that $\GL(n,\Or)$ operates on $\Or \otimes_kV=\F$ on the right by regarding it as a free two-sided module. Notice also that the usual change of basis rules apply to this situation. That is, the formula for the action changes by conjugation by a change of basis matrix. Notice that $\Or$-basis changes correspond to choices of the isomorphism between the lattice variety and the variety of $\Or$ subspaces of codimension $n$ in $\F/\theta^{nr}\F.$ Now $\OF$ is the quotient of $\F$ by the two-sided submodule ${\thetao}^n\F$  and so $\GL(n,\OOr)$ operates on $\OF$ by right multiplications taking submodules with a fixed codimension and elementary divisor type into modules of the same codimension and elementary divisor type. Note however that the image of $\GL(n,k)$ in $\GL(n,\OOr)$ is not independent of the choice of $V$ and the isomorphism $\Or\otimes V\simeq \F$ and that the conjugate of a $p$-linear map is again $p$-linear only for elements of $\GL(n,k).$ (We caution the reader that the center of this group is very small, only $\mathbb{F}_p^*.$ The conjugate of a $p$-linear map is $p$-linear only for the  appropriate conjugate of $k$.)
\begin{theorem}
For each $p$-nilpotent transformation $\phi:V\rightarrow V,$ the $\OOr$-module $\lie \phi$ is an $\OOr$-submodule of $\OF$ of codimension $n$ transversal to $V.$ The correspondence which associates $\phi$ to $\lie \phi$ is a bijective correspondence between the set of $p$-nilpotents on $V$ and the set of $\OOr$-submodules of $\OF$ of codimension $n$ transversal to $V.$ For a fixed choice of basis, this correspondence is a $\GL(n,k)$-equivariant algebraic isomorphism from the scheme of $p$-nilpotents to $Y_0$ under the natural action. \end{theorem}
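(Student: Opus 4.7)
The plan is to realise $\mathfrak{L}(\phi)$ as the kernel of an explicit surjection $\pi_\phi \colon \OF \twoheadrightarrow V$, from which every stated property will fall out immediately; the inverse map is then read off from the splitting that transversality supplies.

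First I would define $\pi_\phi$ on the $k$-basis $\{\thetao^a e_j : 0\le a<n,\ 1\le j\le n\}$ of $\OF$ by $\pi_\phi(\thetao^a e_j) = \phi^a(e_j)$ and extend $k$-linearly. The identity $\thetao^a c = c^{p^a}\thetao^a$ combined with the $p^a$-linearity of $\phi^a$ upgrades this to $\pi_\phi(\thetao^a m) = \phi^a(m)$ for all $m\in V$, so $\pi_\phi$ annihilates every generator in $R(\phi)$ and hence $\mathfrak{L}(\phi)\subseteq\ker\pi_\phi$. Taking $(\alpha,\beta,\gamma)=(0,a,0)$ in the definition of $R(\phi)$ conversely yields $\thetao^a v-\phi^a(v)\in\mathfrak{L}(\phi)$ for every $v\in V$, so $V+\mathfrak{L}(\phi)=\OF$ forces equality $\mathfrak{L}(\phi)=\ker\pi_\phi$. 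The induced isomorphism $\OF/\mathfrak{L}(\phi)\cong V$ then delivers codimension exactly $n$, transversality $V\cap\mathfrak{L}(\phi)=0$ (because $\pi_\phi|_V = \mathrm{id}_V$), and $\thetao$-stability (applying $\thetao$ to the generator with parameters $(\alpha,\beta,\gamma,m)$ yields the generator with parameters $(\alpha+1,\beta,\gamma,m)$); with the tautological $k$-stability this makes $\mathfrak{L}(\phi)$ an $\OOr$-submodule.

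For the inverse, given $M\in Y_0$ the splitting $\OF=V\oplus M$ identifies $V$ with $\OF/M$, and the $\thetao$-action on the quotient transports back to a $p$-linear endomorphism $\phi_M$ of $V$; the identity $\thetao^n=0$ in $\OOr$ forces $\phi_M^n=0$, which is equation \eqref{nil}. Mutual inverseness is direct: $\phi_{\mathfrak{L}(\phi)}=\phi$ since $\thetao v - \phi(v)\in\mathfrak{L}(\phi)$ is precisely the relation defining the transported action, and conversely $\mathfrak{L}(\phi_M)\subseteq M$ follows by induction on $a$ using $\OOr$-stability of $M$ to promote $\thetao v-\phi_M(v)\in M$ to $\thetao^a v-\phi_M^a(v)\in M$ and hence to every generator of $R(\phi_M)$; equality then holds by the shared codimension $n$.

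Finally, algebraicity and equivariance. The $n(n-1)$ elements $\thetao^a e_j-\phi^a(e_j)$ with $1\le a<n$ and $1\le j\le n$ are polynomial in the matrix entries of $\phi$, lie in $\mathfrak{L}(\phi)$, and project to the standard basis of $\OF/V$, hence form a basis of $\mathfrak{L}(\phi)$; this exhibits $\phi\mapsto\mathfrak{L}(\phi)$ as an algebraic morphism into the principal affine chart of the Grassmannian parametrising $(n^2-n)$-planes complementary to $V$. That chart is exactly $Y_0$, on which the values $\phi_M(e_j)$ are recovered as the $V$-components of $\thetao e_j$ along $M$, which are standard affine coordinates, making the inverse algebraic. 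For $\GL(n,k)$-equivariance, $A\in\GL(n,k)$ extends canonically to an $\OOr$-linear automorphism of $\OF$ via $A(\thetao^a v) = \thetao^a A(v)$; a direct computation gives $\pi_{A\phi A^{-1}} = A\circ\pi_\phi\circ A^{-1}$ and hence $\mathfrak{L}(A\phi A^{-1}) = A\cdot\mathfrak{L}(\phi)$, which is the asserted equivariance under the constant embedding $\GL(n,k)\hookrightarrow\GL(n,\OOr)$.

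The main obstacle is bookkeeping: the $p$-semilinearity of left $\thetao$-multiplication and the $p$-linearity of $\phi$ must be kept in sync at every step. The map $\pi_\phi$ is the organising device, because intertwining these two semilinear operations is exactly what it does; once $\pi_\phi$ is in place, every remaining claim reduces to a dimension count or to a specialisation of the generators in $R(\phi)$.
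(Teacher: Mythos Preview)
Your proof is correct and follows essentially the same route as the paper: your $\pi_\phi$ is exactly the paper's $\lambda_\phi$, and the identification $\mathfrak{L}(\phi)=\ker\pi_\phi$ together with $\pi_\phi|_V=\id_V$ is the core of both arguments. A few minor differences are worth noting. Your deduction of $\ker\pi_\phi=\mathfrak{L}(\phi)$ from $V+\mathfrak{L}(\phi)=\OF$ and $\pi_\phi|_V=\id$ is marginally more direct than the paper's two-sided dimension count on $\OF/\mathfrak{L}(\phi)$, though the content is the same. Your treatment of algebraicity is more explicit: exhibiting the basis $\{\thetao^a e_j-\phi^a(e_j):1\le a<n\}$ makes the morphism into the affine chart of the Grassmannian visible, whereas the paper leaves this implicit. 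For equivariance, the paper works with the \emph{right} $\GL(n,\OOr)$-action on $\OF$ (elements as rows) and verifies $\mathfrak{L}(A\phi A^{-1})=\mathfrak{L}(\phi)A^{-1}$ by an explicit matrix computation; you instead use the left action $\id_{\OOr}\otimes A$ and the composition identity $\pi_{A\phi A^{-1}}=A\circ\pi_\phi\circ A^{-1}$, which is cleaner but uses a different (equally natural) $\GL(n,k)$-action on $Y_0$. Both establish equivariance; just be aware that your action corresponds to the paper's right action by the transpose rather than by the inverse, so the two formulas are not literally identical.
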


\begin{proof} Consider $R(\phi)$ (see \eqref{gen}). It is closed under multiplication by $\thetao$ and so $\lie \phi$ is an $\OOr$-submodule of $\OF.$ Then note that the expression ${\thetao}^{a+b}\otimes \phi^c(m)-{\thetao}^a\otimes \phi^{b+c}(m)$ is $p^{a+b+c}$-linear in $m$ for fixed $a, b$ and $c.$ It follows that, for any basis $\{v_1,\dots ,v_n\},$ the $k$-span of $R(\phi)$ is the same as the $k$-span of $\{{\thetao}^{a+b}\otimes \phi^c(v_i)-{\thetao}^a\otimes \phi^{b+c}(v_i):i=1, \dots , n,\; a,b,c\in \mathbb{Z},\;a,b,c\geq 0\}.$ 

Let $\phi:V\rightarrow V$ be a $p$-nilpotent. Then consider $V_{\phi}$ the $\OOr$-module described above. 
Define a map $\lambda_{\phi}:\OOr\otimes V\rightarrow V_{\phi}$ by the equation $\lambda_{\phi}({\thetao}^s\otimes m)=\phi^s(m).$ This is a surjective left $\OOr$-module map and since $V_{\phi}$ is of dimension $n,$ its kernel is an $\OOr$-submodule of $\OF$ of codimension $n.$ We propose to prove that it is transversal to $V$ and that it is equal to $\lie \phi.$ Consider the element ${\thetao}^{a+b}\otimes \phi^c(m)-{\thetao}^a\otimes \phi^{b+c}(m)$ in $R(\phi).$ It is clear that $\lambda_{\phi}$ takes such an element to $0$ and so $\lie \phi \subseteq \operatorname{Ker}(\lambda_{\phi}).$

Let $M=\OF/\lie \phi.$ It is clear that there is a natural surjection $\overline{\lambda}:M\rightarrow V_{\phi}.$ Since expressions $({\thetao}^s\otimes m-1\otimes \phi^{s}(m))$ all lie in $\lie \phi,$ each element of $M$ is congruent to an element of the image of $V$ in $M.$ Hence $M$ is of dimension at most $n.$ On the other hand it maps surjectively onto $V_{\phi}$ and so it is of dimension at least $n.$ Thus $M$ is isomorphic to $V_{\phi}$ and so $\lie \phi$ is an $\OOr$-submodule of $\OF$ of codimension $n.$ Furthermore, by definition, $\lambda_{\phi}$ carries the element $1\otimes v$ to $v$ in $V_{\phi}.$ That is, $\lambda_{\phi}|V$ is an isomorphism and so $\lie \phi$ is transversal to $V.$

Suppose that $\mathfrak{L}$ is an $\OOr$-submodule of $\OF$ transversal to $V.$ Let $\pi:\OF \rightarrow \OF/\mathfrak{L}$ be the natural projection. Then, by transversality, $\pi|V$ is an isomorphism. Define $\phi_{\mathfrak{L}}$ by the equation $\phi_{\mathfrak{L}}(m)=(\pi|V)^{-1}(\thetao \pi(m)).$ This is a $p$-nilpotent on $V.$ It is the simplest triviality to show that the correspondences $\phi \mapsto \lie \phi$ and $\mathfrak{L}\mapsto \phi_{\mathfrak{L}}$ are inverse to one another.

We now turn to the equivariance. For this we must emphasize that a basis $\{e_1, \dots , e_n\}$ has been chosen and that all matrices and vectors are expressed as rows with respect to this basis. Define an operation on the vector $v=(c_1,\dots ,c_n)$ by the equation $v^{[p]}=(c_1^p,\dots ,c_n^p).$ Then by \eqref{pac} if $\phi$ has the matrix $B,$ the action of $\phi$ is described by $\phi(v)=v^{[p]}B.$ Let $\psi=A\phi A^{-1}$ where we are taking the simple composition of a linear operator, a $p$-linear operator and the inverse of the linear operator. We assert that the following diagram commutes:
\begin{equation}\label{eqv}
\begin{CD}
\OOr\otimes V @>\lambda_{\phi}>> V_{\phi}\\
@VV1\otimes A^{-1}V  @VV A^{-1}V \\
\OOr \otimes V @>\lambda_{\psi}>> V_{\psi}
\end{CD}
\end{equation}
The vertical arrows represent simple right matrix multiplication by $1\otimes A^{-1}$ and $A^{-1}$ respectively. The horizontal maps are those defined above labeled by their associated $p$-linear map. Then $(\lambda_{\phi}({\thetao}^r\otimes v))A^{-1}=(\phi^r(v))A^{-1}=(\dots ((v^{[p]}B)^{[p]}B)^{[p]}\dots B)^{[p]}A^{-1}=v^{[p^r]}B^{[p^{r-1}]}\cdots BA^{-1}.$ Descending by $1\otimes A^{-1}$ and applying $\lambda_{\psi}$ yields:
\begin{equation*}\begin{split}
\lambda&_{\psi}({\thetao}^r\otimes vA^{-1}) =\left(\dots \left(\left(\left(vA^{-1}\right)^{[p]}\right)A^{[p]}BA^{-1}\right)^{[p]}  \dots \right)^{[p]}A^{[p]}BA^{-1}\\
&=\left(v^{[p^r]}(A^{[p^r]})^{-1}\right)\left(A^{[p^r]}B^{[p^{r-1}]}(A^{[p^{r-1}]})^{-1}\right)\left(A^{[p^{r-1}]}B(A^{[p^{r-2}]})^{-1}\right) \\
&\hskip 1cm\dots\left(A^{[p^2]}B^{[p]}(A^{[p]})^{-1}\right)\left(A^{[p]}BA^{-1}\right)\\&=v^{[p^r]}B^{[p^{r-1}]}B^{[p^{r-2}]}\dots BA^{-1}.
\end{split}
\end{equation*}
This establishes the commutativity of \eqref{eqv}.  The commutativity of that diagram implies that $\operatorname{Ker}(\lambda_{\psi})=(\operatorname{Ker}(\lambda_{\phi}))A^{-1}.$ But then $\lie \phi=\operatorname{Ker}(\lambda_{\phi})$ and $\psi =A\phi A^{-1}$ so that this equation can be rewritten as $\lie {A\phi A^{-1}}=\lie \phi A^{-1}.$ That is, the correspondence between $p$-nilpot{-}ents on $V$ and $\OOr$-submodules of $\OF$ transversal to $V$ is equivariant with respect to the natural action of $\GL(n,k).$ 
\end{proof}
The reader is cautioned that the subgroup $\GL(n,k)\subseteq \GL(n,\OOr)$ is dependent on the choice of basis. It is also true that the conjugate of a $p$-linear map by an element of $\GL(n,\OOr)$ is not in general a $p$-linear map.

We conclude with some remarks on the relation of Lusztig's original construction to this construction. As we have established an analogous isomorphism, there is no special reason to give full details. We outline a procedure to show that it is the natural dual of our construction. Let $U$ be the linear dual of $V$ and let the value of $u$ on $v$ be $\langle u, v\rangle.$ Let $\mathfrak{U}=U\otimes_k\OOr$ and view it as a right $\OOr$-module. View $\mathfrak{U}$ as a $k$-vector space under right multiplication by $k$ and view $\OF=\OOr \otimes V$ as a left $\OOr$-module and a $k$   -vector space under left multiplication. Extend the natural pairing to these modules by the equation, $\langle u\otimes {\thetao}^r, {\thetao}^s\otimes v\rangle =\langle u,v\rangle \delta_{r+s,n-1},$ this last symbol being a Kronecker delta. This is an adjoint pairing, that is, $\langle ma, n\rangle =\langle m, an\rangle,\; a\in \OOr.$ Hence the orthogonal complement of an $\OOr$-submodule is an $\OOr$-submodule. Let $\mathfrak{U}_1= \sum_{i=0}^{n-2}U\otimes {\thetao}^i .$ Note that $\mathfrak{U}_1$ and $V$ are orthogonal complements. Thus submodules of $\OF$ of codimension $n$ transversal to $V$ are complementary to submodules of $\mathfrak{U}$ transversal to $\mathfrak{U}_1.$ Inverse $p$-linear maps are additive maps such that $\tau(a^pv)=a\tau(v).$ One shows that for $\mathfrak{M},$ any $n$-dimensional subspace of $\mathfrak{U}$ transversal to $\mathfrak{U}_1,$ there is a unique inverse $p$-linear map $\tau$ so that $\mathfrak{M}$ is the set of all elements of the form $u\otimes {\thetao}^{n-1}+\sum_{i=0}^{n-2}\tau^i(u)\otimes {\thetao}^{n-1-i}.$ Then one shows that the orthogonal complement of $\lie \phi$ is the $n$-dimensional submodule of $\mathfrak{U}$ associated to the inverse $p$-linear map $\tau$ which is uniquely determined by the equation $\langle u, \phi(v)\rangle^p =\langle \tau(u),v\rangle.$

\section{Line Bundles on Orbit Closures}\label{OC}
This section is devoted to the geometry of $\Lrn$ which is largely determined by its $\SL(n,\V)$-orbit structure. The orbit structure was essentally known to Hilbert and was the basis of the Birkhoff decomposition. One of the first explicit descriptions of it in modern terms is to be found in \cite{im}. 

We recall a very elementary result from commutative algebra. Let $R$ be a normal domain and let $\mathcal{M}$ denote the set of height one primes in $R$. For a finitely generated $R$-module $N$, let $N^*$  denote its linear dual. Then for any finitely generated torsion free module $N$, $N^{**}=\bigcap_{\mathfrak{p}\in \mathcal{M}}N_{\mathfrak{p}}$. Consequently, since $\Lrn$ is normal, the global sections in any coherent reflexive sheaf can be computed on any open set containing all the height one primes. Recall that $\mathbb{L}(r_1,\dots ,r_n)$ is the lattice with a basis $\{p^{r_1}e_1,\dots ,p^{r_n}e_n\}$.  Theorem 6 of subsection 3.5 of \cite{me} asserts that the smooth locus of $\Lrn$ is exactly the $\SL (n,\V)$-orbit of the lattice, $\mathbb{L}(nr,0,\dots, 0)$. Said otherwise, this is just the set of lattices $L$ such that $F/L\simeq \V/p^{nr}\V$. The complement of this orbit is of codimension two and it is the closure of the orbit of the lattice $\mathbb{L}(nr-1,1,0,\dots,0)$ (see the end of the proof of Proposition 10 in \S 3 of \cite{me}).

The elements of $\SL (n,\V)$ with subdiagonal elements of positive valuation constitute an Iwahori subgroup.  Write $B$ for this Iwahori subgroup  of $\SL (n,\V)$. Then the $B$-orbits in $\Lrn$ correspond exactly to the lattices $\{ \mathbb{L}(r_1,\dots, r_n):0\leq r_i \leq nr, \sum_ir_i=nr\}$. These $n$-tuples correspond exactly to the elements $p^r\operatorname{diag}(p^{r_1},\dots ,p^{r_n})$, which correspond to the dual of the character group of $\SL (n,\V)$, that is to say, its cocharacters which can be seen either as one-parameter subgroups or as characters of the Langlands dual. (The $\SL(n,\V)$-orbits correspond to the dominant cocharacters.) We summarize the results of \S 3.4 of \cite{me} in the following lemma. The proof is contained in several statements, equations and discussions in that section of \cite{me}.

\begin{lemma}\label{L:or}
Under the $\SL(n,\V)$-action or the $B$-action on $\Lrn$ there are finitely many orbits. Moreover:
\begin{enumerate}
\item The $\SL(n,\V)$-orbits are the orbits of the elements $\mathbb{L}(r_1,\dots ,r_n)$ where $0\leq r_i,\,  r_1\geq r_2 \geq \dots \geq r_n$ and $\sum_ir_i=nr$.
\item The $B$-orbits correspond to all of the lattices $\mathbb{L}(r_1,\dots ,r_n)$, $0\leq r_i,\,\sum_ir_i=nr$.
\item There is a unique dominant $\SL(n,\V)$-orbit, the orbit of the group  $\mathbb{L}(nr,0,\dots ,0)$. The complement of this orbit is of codimension two.
\item There is a unique dominant $B$-orbit. It is isomorphic to the affine $n(n-1)r$-space over $k$.
\item The complement of the unique dominant $B$-orbit is the closure of a unique codimension one orbit, the orbit of $\mathbb{L}(0, nr, 0, \dots ,0)$.
\end{enumerate}
\end{lemma}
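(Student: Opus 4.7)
The plan is to reduce everything to three standard structural ingredients: (a) the elementary divisor theorem over the DVR $\V$, (b) the Iwahori-Bruhat decomposition of $\SL(n,\V/p^{nr+1}\V)$, and (c) an explicit stabilizer/dimension computation entirely parallel to Lemma \ref{orb} and Proposition \ref{dim} of the previous section. Parts (1) and (2) are orbit enumerations; parts (3), (4), (5) are extremal dimension comparisons.

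For (1) and (2), the elementary divisor theorem gives, for each $L\subseteq F$ of discriminant $nr$, a $\V$-basis $(p^{r_1}v_1,\ldots,p^{r_n}v_n)$ with $(v_1,\ldots,v_n)$ a $\V$-basis of $F$ and non-negative integers $r_i$ summing to $nr$; the $r_i$ arranged in decreasing order are uniquely determined by the $\V$-isomorphism class of $F/L$. Applying the element of $\SL(n,\V)$ carrying $(v_i)$ to $(e_i)$ (after normalizing the determinant against one of the $e_i$) proves (1). For (2), apply the Iwahori-Bruhat decomposition: every coset of the stabilizer of $\mathbb{L}(r_1,\ldots,r_n)$ in $\SL(n,\V)$ modulo $B$ is represented by a monomial matrix, so the $B$-orbits in a fixed $\SL(n,\V)$-orbit correspond exactly to the distinct orderings of $(r_1,\ldots,r_n)$.

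For (3), a matrix argument formally identical to Lemma \ref{orb} (read with $p$ in place of $\theta$ and $\V$ in place of $\Or$) shows that the codimension of the $\SL(n,\V/p^{nr+1}\V)$-stabilizer of $\mathbb{L}(r_1,\ldots,r_n)$, and hence the dimension of its orbit in $\Lrn$, equals
\[
d(r_1,\ldots,r_n)=\sum_{i<j}(r_i-r_j)=\sum_{i=1}^{n}(n-2i+1)\,r_i.
\]
Subject to $\sum r_i=nr$ with $r_1\geq \cdots \geq r_n\geq 0$, the linear functional $\sum(n-2i+1)r_i$ is uniquely maximized at $(nr,0,\ldots,0)$, where it takes the value $n(n-1)r=\dim\Lrn$. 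The next largest value is $n(n-1)r-2$, attained only at $(nr-1,1,0,\ldots,0)$; hence the complement of the dominant orbit is a union of orbits each of codimension $\geq 2$, yielding (3).

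For (4) and (5), perform the analogous stabilizer calculation for $B$ (rather than $\SL(n,\V)$) acting on $\mathbb{L}(r_1,\ldots,r_n)$; the Iwahori condition on super- and sub-diagonal entries singles out, among all ordered tuples with $\sum r_i=nr$, a unique $B$-orbit of dimension $n(n-1)r$, namely the one of $\mathbb{L}(nr,0,\ldots,0)$. Lattices in this orbit admit a unique basis of the form $(p^{nr}e_1+\sum_{j\geq 2}\alpha_j e_j,\ e_2,\ \ldots,\ e_n)$ with $\alpha_j\in p^{-nr+1}\V/\V$, and the resulting map onto an affine $n(n-1)r$-space of Witt coordinates is an isomorphism by the same Yoneda argument used in Lemma \ref{rv}. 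Finally, the same $B$-stabilizer formula, applied to non-dominant tuples, is uniquely maximized (with value $n(n-1)r-1$) at the cyclic shift $(0,nr,0,\ldots,0)$, giving the unique codimension-one $B$-orbit of (5). The main obstacle is the careful bookkeeping of the $B$-stabilizer: unlike the $\SL$ case the Iwahori condition is asymmetric in $(i,j)$, and checking that the codimension-one orbit is unique (rather than there being a second tuple tied for second place) requires comparing several candidate tuples directly.
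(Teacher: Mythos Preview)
The paper does not actually prove this lemma: it is stated as a summary of results from \cite{me}, with the explicit disclaimer that ``the proof is contained in several statements, equations and discussions in that section of \cite{me}.'' The only piece argued in the present paper is the affine-space assertion in (4), which appears later in the proof of Proposition~\ref{P:cl}: each $\mathbb{L}(r_1,\dots,r_n)$ is fixed by the standard maximal $k$-torus, so its $B$-orbit coincides with its orbit under the unipotent radical of $B/\Cr$, and unipotent orbits over an algebraically closed field are affine spaces. Your overall strategy---elementary divisors for (1), Iwahori--Bruhat for (2), and the stabilizer/dimension formula of Lemma~\ref{orb} and Proposition~\ref{dim} transported from $\Or$ to $\V$ for (3) and (5)---is exactly the standard argument underlying those citations and is sound.

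There is, however, a genuine error in your treatment of (4). The parametrization you write down,
\[
\bigl(p^{nr}e_1+\textstyle\sum_{j\ge 2}\alpha_j e_j,\ e_2,\dots,e_n\bigr),\qquad \alpha_j\in p^{-nr+1}\V/\V,
\]
does not describe lattices in $F$ (the coefficients $\alpha_j$ are not integral), and even read charitably it has the wrong dimension: $(n-1)(nr-1)$, not $n(n-1)r$. Worse, if the $\alpha_j$ are taken in $\V$, then because $e_2,\dots,e_n$ already lie in the lattice you can subtract them off and recover $p^{nr}e_1$; all such bases give the \emph{same} lattice. The correct explicit chart puts the deformation in the other columns,
\[
\bigl(p^{nr}e_1,\ e_2+\alpha_2 e_1,\ \dots,\ e_n+\alpha_n e_1\bigr),\qquad \alpha_j\in \V/p^{nr}\V,
\]
which visibly has dimension $(n-1)\cdot nr=n(n-1)r$. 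Alternatively, and more in the spirit of the paper, you can bypass any explicit chart and invoke the unipotent-orbit argument of Proposition~\ref{P:cl}: once you know the dominant $B$-orbit has dimension $n(n-1)r$ from your stabilizer computation, the fact that it is an affine space of that dimension is automatic.
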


The analogue of the result below in the classical case is well known. See for example \cite{pic}.

\begin{proposition}\label{P:cl}
Let $D$ denote the closure of the unique codimension one $B$-orbit in $\Lrn $. All of the $B$-orbits in $\Lrn$ are isomorphic to affine spaces 
$\mathbb{A}^N_k$ for some $N.$ The Picard group of $\Lrn$ is isomorphic to $\mathbb{Z}$ and its generator is the invertible sheaf $\V (D)$, the polar divisor of $D$.
\end{proposition}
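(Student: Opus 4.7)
The plan is to prove the three assertions of the proposition in order: first that every $B$-orbit is isomorphic to some affine space $\mathbb{A}^N_k$, then that $\operatorname{Pic}(\Lrn)\cong \mathbb{Z}$, and finally that this Picard group is generated by $\V(D)$.

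For the first assertion I would mimic the classical Schubert-cell construction. For each admissible tuple $(r_1,\ldots,r_n)$ with $0\le r_i$ and $\sum_i r_i = nr$, inside the Iwahori subgroup $B\subset \SL(n,\V)$ I would identify the subgroup $U_w$ whose entries in positions $(i,j)$ with $r_i<r_j$ are allowed to vary freely, subject to $p$-adic valuation constraints determined by the differences $r_j-r_i$, while the remaining entries are forced into the stabilizer of $\mathbb{L}(r_1,\ldots,r_n)$. A direct matrix computation in truncated Witt vectors should then show that the restricted action map $U_w \to B\cdot \mathbb{L}(r_1,\ldots,r_n)$ is an isomorphism of $k$-schemes. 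Since $U_w$ is a product of additive truncated Witt-vector groups over $k$, it is itself isomorphic to an affine space. Lemma~\ref{L:or}(4) provides the model for this parametrization in the dominant case, and the Iwahori decomposition for $\SL(n,\V/p^{nr+1}\V)$ supplies the general algebraic framework.

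Next I would compute the Picard group. Let $U_0$ denote the dominant $B$-orbit, so that $U_0 \cong \mathbb{A}^{n(n-1)r}_k$ by Lemma~\ref{L:or}(4), and in particular $\operatorname{Pic}(U_0) = \operatorname{Cl}(U_0) = 0$. The complement $\Lrn\setminus U_0$ is a finite union of closures of lower-dimensional $B$-orbits, and by Lemma~\ref{L:or}(5) the unique irreducible component of codimension one in this complement is $D$. Since $\Lrn$ is normal by Theorem~\ref{M:th}, the excision sequence for divisor class groups yields
\[
\mathbb{Z}\cdot[D]\longrightarrow \operatorname{Cl}(\Lrn)\longrightarrow \operatorname{Cl}(U_0) = 0,
\]
so $\operatorname{Cl}(\Lrn)$ is cyclic with generator $[D]$. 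To identify the group and generator precisely, recall from \cite{me2} that the Picard group of the smooth locus of $\Lrn$ is isomorphic to $\mathbb{Z}$ with an ample generator $\mathcal{L}$, realized as the restriction of a very ample line bundle defined globally on $\Lrn$ via a projective embedding. Normality of $\Lrn$ together with the codimension-two singular locus gives isomorphisms between the class group of $\Lrn$, that of its smooth locus, and the Picard group of the smooth locus, each being $\mathbb{Z}$. Hence $[D]$ generates $\operatorname{Cl}(\Lrn)\cong \mathbb{Z}$. The global very ample line bundle $\mathcal{L}$ must then be a positive multiple of $\V(D)$ as a Weil divisor class, and a short local analysis at the singular points of $\Lrn$ — extracting a local defining equation for $D$ from the known local equations of $\mathcal{L}$ — shows that $D$ is Cartier, so $\V(D)\in \operatorname{Pic}(\Lrn)$ and generates it.

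The hard part will be the first assertion. While Lemma~\ref{L:or}(4) already records the dominant $B$-orbit as an affine space, establishing the analogous result for every $B$-orbit demands a careful mixed-characteristic analog of the Schubert-cell parametrization. One must verify that the candidate unipotent subgroup $U_w$ acts simply transitively with affine-space image, and that the Witt-vector arithmetic governing the Iwahori matrix entries does not destroy the affine structure of the quotient $B/\operatorname{Stab}_B(\mathbb{L}(r_1,\ldots,r_n))$. A secondary technicality will be verifying directly that $D$ is Cartier at the singular points, rather than merely a Weil divisor, in order to pin down the cyclic generator of $\operatorname{Pic}(\Lrn)$ rather than just of $\operatorname{Cl}(\Lrn)$.
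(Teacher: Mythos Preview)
Your approach to the Picard-group computation via the excision sequence is essentially the same as the paper's: the dominant $B$-orbit is an affine space with trivial class group, its complement has a unique codimension-one component $D$, so $\operatorname{Cl}(\Lrn)$ is cyclic generated by $[D]$; projectivity supplies an ample class which forces the group to be infinite cyclic. Your extra care about Weil versus Cartier is reasonable, though the paper simply asserts (somewhat loosely) that normality suffices.

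For the first assertion, however, you are making the problem much harder than it is. You propose constructing, for each tuple $(r_1,\dots,r_n)$, an explicit unipotent subgroup $U_w\subset B$ and verifying by a direct Witt-vector matrix computation that it acts simply transitively on the orbit. That would work, but the paper avoids all of this with a single observation: the action of $\SL(n,\V)$ factors through the finite-dimensional group $G_r=\SL(n,\V/p^{nr}\V)$, and the image of $B$ in $G_r$ is solvable. Each standard lattice $\mathbb{L}(r_1,\dots,r_n)$ is fixed by the maximal torus, so its $B$-orbit coincides with its orbit under the unipotent radical of $B/\Cr$. Now invoke the general fact that \emph{every orbit of a connected unipotent group over an algebraically closed field is isomorphic to an affine space}. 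This disposes of the first assertion in two lines, with no case-by-case parametrization and no Witt-vector arithmetic. The ``hard part'' you anticipate simply does not arise.
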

\begin{proof}
The proof of this is standard. First note if an element of the group $\SL(n,\V)$ is congruent to $I_n$ modulo $p^{nr}$, it acts trivially on $\Lrn $. Let $\Cr$ denote this congruence subgroup. Then $\SL (n,\V)/\Cr$ is a finite dimensional algebraic group with a nontrivial unipotent radical and semisimple part equal to $\SL(n,k)$.  A standard maximal torus in $\SL(n,\V)$ is the set of matrices in it with multiplicative representatives (i.e. roots of unity) on the diagonal. Let $\Gamma_0$ denote the group of elements in $\SL(n,\V)$ which are congruent to $I_n$ modulo $p$.  The unipotent radical of $\SL(n,\V)/\Cr$ is $\Gamma_0/\Cr$. Notice that $B/\Cr$ is a solvable group.

Each lattice of the form $\mathbb{L}(r_1,\dots ,r_n)$ is stabilized by the maximal torus and of course by $\Cr$. Hence its orbit is an orbit of the unipotent radical of $B/\Cr$. All unipotent orbits over an algebraically closed field are affine spaces. This establishes the first assertion.

For the second note that since $\Lrn$ is normal, every Weil divisor is Cartier. Thus the Picard group is the divisor class group. Any divisor can be written in the form $mD+\sum_{i=1}^q t_iM_i$ where the $M_i$ are divisors with generic point in the dominant orbit and $t_i\in \mathbb{Z}$. Since the coordinate ring of the dominant orbit is a unique factorization domain, there is a rational function on it $f$, whose divisor is of the form $sD+\sum_{i=1}^qt_iM_i$. Hence every divisor is linearly equivalent to a multiple of $D$. On the other hand, $\Lrn$ is projective and so has a very ample divisor which by necessity is not torsion. That divisor is a multiple of $D.$ Hence $D$ is ample. It is worth noting that the same argument, which is frequently deployed in discussions of rational varieties, appears in \cite{me2} to prove a similar result for the dominant $\SL(n,\V)$-orbit.
\end{proof}
We turn to some elementary aspects of the structure of $\Lrn $ which will permit us to view $\Gamma (\Lrn,\V (qD))$, the module of sections as a module of sections on a more tractable subvariety. It is a representation of $\SL (n,\V)$ over $k.$  We will describe the dualizing sheaf explicitly.  We use $X_0$ to denote the dominant $\SL (n,\V)$-orbit in $\Lrn$. We begin with the geometric structure of the dominant orbit.  We recall that $\Gamma_{nr}$ denotes the set of matrices in $\SL (n,\V)$ congruent to $I_n$ modulo $p^{nr}$. Let $G_r=\SL (n,\V)/\Cr \simeq \SL (n,\V/p^{nr}\V )$. Then $\Cr$ operates trivially on $\Lrn$ and so it admits a $G_r$-action. There is a natural map from $G_r$ to $\SL(n,k)$ induced by the natural surjection, $\V/p^{nr}\V \rightarrow k$. This homomorphism induces a natural transitive left action of $G_r$ on $\mathbb{P}_k^{n-1}$.
Let $P$ denote the subgroup of $G_r$ consisting of elements whose first row is of the form $(a_1, pa_2,\dots ,pa_n)$. It is a maximal parabolic and the stabilizer of a point on $\mathbb{P}_k^{n-1}$.
\begin{theorem}\label{T:do}
The dominant orbit $X_0$ is a principal fiber space over $\mathbb{P}_k^{n-1}$. Let $P_r$ denote the subgroup of $G_r$ consisting of elements with first row $(a, 0,\dots ,0)$. Then $X_0$ may be written in the form $G_r\times^P P/P_r$. Moreover $P/P_r$ is isomorphic to the affine space of dimension $(n-1)(nr-1)$ over $k$, and the projection $X_0\rightarrow \mathbb{P}_k^{n-1}$ is an affine morphism.
\end{theorem}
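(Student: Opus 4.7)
The plan is to realize $X_0$ as the homogeneous space $G_r/P_r$ and to exhibit the asserted fibration as the natural map $G_r/P_r\to G_r/P$, with $G_r/P$ identified with $\mathbb{P}_k^{n-1}$ via the action obtained from the reduction $G_r\to\SL(n,k)$. First I would identify the stabilizer in $G_r$ of the base lattice $L_0=\mathbb{L}(nr,0,\ldots,0)=\V p^{nr}e_1\oplus \V e_2\oplus\cdots\oplus \V e_n$. An element $g=(g_{ij})\in \SL(n,\V)$ sends $e_j$ (for $j\geq 2$) into $L_0$ if and only if $g_{1j}\in p^{nr}\V$, which modulo $p^{nr}$ says the first row of $g$ in $G_r$ has the shape $(a,0,\ldots,0)$; combined with the fact that $\Cr$ acts trivially on $\Lrn$ and that $X_0$ is a single $\SL(n,\V)$-orbit (Lemma~\ref{L:or}), this gives $X_0=G_r/P_r$. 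Since the first-row shape $(a_1,pa_2,\ldots,pa_n)$ reduces mod $p$ to $(a_1\bmod p,0,\ldots,0)$, $P$ is precisely the preimage of the standard parabolic fixing $[e_1]\in\mathbb{P}_k^{n-1}$, so $G_r/P\simeq\mathbb{P}_k^{n-1}$. The inclusion $P_r\subseteq P$ is immediate, and the resulting tower $X_0=G_r/P_r\to G_r/P=\mathbb{P}_k^{n-1}$ is canonically $G_r\times^P(P/P_r)$.

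The main computation is the identification of the fiber $P/P_r$. I would introduce the abelian unipotent subgroup
\[
U=\Bigl\{I_n+\sum_{j=2}^n v_j E_{1j}:\ v_j\in p\V/p^{nr}\V\Bigr\}\subset P
\]
and show that $U\cap P_r=\{I_n\}$ and $P=U\cdot P_r$. For the latter, given $g\in P$ with first row $(a_1,pa_2,\ldots,pa_n)$, writing $g=uh$ with $h\in P_r$ forces the $v_j$ to satisfy $\sum_{j\geq 2}v_j g_{ji}=pa_i$ for $i\geq 2$. Because $a_1$ is a unit and $\det g=1$, expansion of the determinant along the first row shows the lower $(n-1)\times(n-1)$ block $(g_{ji})_{i,j\geq 2}$ has unit determinant, so this linear system has a unique solution with $v_j\in p\V/p^{nr}\V$. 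Multiplication then gives an isomorphism of $k$-schemes $U\xrightarrow{\sim}P/P_r$. As a $k$-scheme, $p\V/p^{nr}\V$ is the closed subscheme of $\V/p^{nr}\V$ cut out by the vanishing of the zeroth Witt coordinate, hence affine of dimension $nr-1$; consequently $U\simeq\mathbb{A}_k^{(n-1)(nr-1)}$.

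Affineness of the projection $X_0\to\mathbb{P}_k^{n-1}$ is then formal: $G_r\to G_r/P$ is a locally trivial $P$-torsor, trivial over the Bruhat cells of $\mathbb{P}_k^{n-1}$, and the associated bundle with affine fiber $P/P_r$ is affine over the base. The step requiring most care is the Witt-coordinate bookkeeping; in particular, justifying that the first-row entries $pa_j$ of elements of $P$ contribute dimension $nr-1$ rather than $nr$ (because multiplication by $p$ kills the zeroth Witt coordinate) is what produces the stated fiber dimension $(n-1)(nr-1)$.
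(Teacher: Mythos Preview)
Your proof is correct and follows essentially the same route as the paper: identify the stabilizer $P_r$ of $\mathbb{L}(nr,0,\dots,0)$ by a direct computation, obtain the tower $G_r/P_r\to G_r/P=\mathbb{P}_k^{n-1}$, and exhibit $P/P_r$ as the unipotent group $U$ of matrices $\left(\begin{smallmatrix}1&p\boldsymbol{a}\\0&I\end{smallmatrix}\right)$ via a factorization $P=U\cdot P_r$. The only cosmetic difference is that the paper writes the factorization as an explicit block-matrix identity (so the solution $p\boldsymbol{a}X_2^{-1}$ appears directly), whereas you set up the linear system for the $v_j$ and invoke invertibility of the lower $(n-1)\times(n-1)$ block; these are the same computation.
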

\begin{proof}
As we have noted the dominant orbit is the $G_r$-orbit of \linebreak $\mathbb{L}(nr,0,\dots ,0)$. If $L$ is any other lattice which can be written in the form $N+p^{nr}v$, where $N$ is a rank $n-1$ direct summand of $F$ and $v$ is a vector which reduces to a cyclic generator of $F/N$, then $L$ is certainly in this orbit and clearly lattices in the dominant orbit are all of this form.

In general, let $(\bu _1,\dots ,\bu_n)$ be a set of column vectors spanning a lattice of discriminant $nr$ in $F$. The matrix $M\in \SL(n,\V)$ operates on this span by replacing the given basis with $M\bu _1,\dots ,M\bu _n$. These two bases span the same lattice if and only if there exist elements $a_{i,j}$ such that $M\bu _j=\sum_{i=1}^na_{i,j}\bu _j$. The $a_{i,j}$ form a matrix in $\GL (n,\V)$, and since $M\in \SL(n,\V)$, the matrix $(a_{i,j})$ is in $\SL(n,\V)$.

Let $U$ be the matrix with columns $\bu _j$ and let $A=(a_{i,j})$. According to the previous paragraph, $U$ and $MU$ have the same lattice as column space if and only if there is $A\in \SL(n,\V)$ such that $MU=UA$. In another context this would just mean that $M=UAU^{-1}$. Simple as this expression is, multiplication by a matrix involving negative powers of $p$ in Witt vectors would require taking $p$'th roots.  Let $D=\operatorname{diag}(p^{nr},1,\dots ,1)$. Then $M$ stabilizes the column span of $D$ if and only if for some $A\in \SL(n,\V),\;MD=DA$. If $M=(m_{i,j}),\; A=(a_{i,j})$, the condition is that $M\in \SL (n,\V)$ and $m_{1,j}=p^{nr}a_{1,j}$ with $j>1$ and $p^{nr}m_{1,1}=p^{nr}a_{1,1}$, that is, $m_{1,j}\in p^{nr}\V$ for $j>1$.  There is also a condition on the first column of $A$ but this does not impose any condition on $M.$ Thinking modulo $\Cr$ the stabilizer of $\mathbb{L}(nr,0,\dots ,0)$ in $G_r$ is the subgroup consisting of matrices with first row congruent to $(a_{1,1},0,\dots ,0)$ in $\SL(n,\V)/\Cr.$ Call this group $P_r$. Then $X_0=G_r/P_r$. 

Now $P_r$ is contained in the maximal parabolic $P$.  Then $G_r/P=\mathbb{P}_k^{n-1}$ and there is a natural $G_r$-equivariant map $G_r/P_r\rightarrow G_r/P$ with fiber $P/P_r$. That $G_r/P_r=G_r\times^PP/P_r$ follows trivially. We must describe $P/P_r$ as a $P$-space.  If $X\in P$, we may write it:
\begin{equation*}
 \left(
 \begin{matrix}
	x_{1,1} & p\boldsymbol{a} \\
        X_1     & X_2
  \end{matrix}
  \right).
\end{equation*}
In this expression, $x_{1,1}\in \V/p^{nr}\V,\:X_1$ is a column of height $n-1$ with entries in $\V/p^{nr}\V,\: X_2$ is an $n-1$ by $n-1$ matrix with entries in $\V/p^{nr}\V$ and $\boldsymbol{a}$ is a row of length $n-1$ with entries in  $\V/p^{nr-1}\V$. It can be factored uniquely as:
\begin{equation}\label{E:bc}
\left(
 \begin{matrix}
	x_{1,1} & p\boldsymbol{a} \\
        X_1     & X_2
  \end{matrix}
  \right)=
  \left(
  \begin{matrix}
         1  &  p\boldsymbol{a}X_2^{-1} \\
         0  &   I
  \end{matrix}
  \right)
  \left(
  \begin{matrix}
        x_{1,1}-p\boldsymbol{a}X_2^{-1}X_1  & 0 \\
         X_1                                & X_2
  \end{matrix}
  \right).
\end{equation}
Since the determinant of $X$ is $1$ and the determinant of the first factor in (5.1) is $1$, the determinant of the second factor is \emph{a fortiori} $1$. This factorization implies that $P\simeq U\times_kP_r$ where $U$ is the group of elements:
\begin{equation}\label{D:U}
\left(
 \begin{matrix}
	1 &p\boldsymbol{a} \\
	0 & I
 \end{matrix}
\right).
\end{equation}
The isomorphism is defined by the group product and it is an isomorphism only of varieties. It is clear that the set of elements $\{pa:\, a\in \V/p^{nr}\V \}$ is isomorphic to affine $nr-1$ space. Hence $U\simeq \mathbb{A}_k^{(n-1)(nr-1)}$.
\end{proof}
\begin{corollary}\label{C:is}
The isomorphism between $\mathbb{A}^{(n-1)(nr-1)}_k$ and $P/P_r$ is the map which sends $(a_2,\dots ,a_{n-1})$ to the $P_r$-coset of
\begin{equation}\label{M:par}
\left(
\begin{matrix}
	1 & pa_2 & pa_3 & \dots & pa_{n-1} \\
	0 &  1  & 0     & \dots &  0       \\
	\vdots & \vdots & \ddots & \vdots & \vdots \\
	0 & 0 &0 &\dots & 1
\end{matrix}
\right).
\end{equation}
In this expression, $a_i\in \V/p^{nr-1}\V$ with $a_i=\sum_{j=1}^{nr-2}\xi(x_{i,j})^{p^{-j}}p^j$ 
and $x_{i,j}$ is an indeterminate.  Write $p\boldsymbol{a}_{2,1}=(pa_2, \dots ,pa_{n-1})$.  Then the action of the matrix,
\begin{equation}\label{M:gm}
\left(
\begin{matrix}
	x_{1,1}& pX_{1,2} \\
	X_{2.1} & X_{2,2}
\end{matrix}
\right)
\end{equation}
on the coset of \eqref{M:par} is described by the expression
\begin{equation}\label{M:act}
\left(
\begin{matrix}
	1&p(x_{1,1}\boldsymbol{a}+X_{1,2})(X_{2,2}+pX_{2,1}\boldsymbol{a})^{-1}\\
	0 & I
\end{matrix}
\right).
\end{equation}
\end{corollary}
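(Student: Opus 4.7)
The parameterization assertion is essentially a restatement of what the proof of Theorem~\ref{T:do} already established. There the factorization \eqref{E:bc} exhibited $P$ as the product of varieties $U\times_k P_r,$ with $U$ the group of matrices of the form \eqref{D:U}. Thus every coset $uP_r\in P/P_r$ has a unique representative in $U,$ so the first part of the corollary will follow once one checks that the affine space of tuples $(a_2,\dots,a_n)$ with each $a_i\in \V/p^{nr-1}\V,\; a_i=\sum_{j=0}^{nr-2}\xi(x_{i,j})^{p^{-j}}p^j,$ parameterizes $U$ in the obvious way. This is immediate since the Witt-vector description of $\V/p^{nr-1}\V$ as $\mathbb{A}^{nr-1}_k$ (with coordinates being the Witt components) gives $U\simeq \mathbb{A}^{(n-1)(nr-1)}_k$ with the coordinates $\{x_{i,j}\}$ displayed.

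For the action formula, the plan is simply to compute the product of the matrix \eqref{M:gm} with the representative \eqref{M:par} and then apply the factorization \eqref{E:bc} to extract the $U$-component. Direct multiplication gives
\[
\begin{pmatrix} x_{1,1}&pX_{1,2}\\ X_{2,1}&X_{2,2}\end{pmatrix}\begin{pmatrix}1&p\boldsymbol{a}\\ 0&I\end{pmatrix}=\begin{pmatrix} x_{1,1}&p(x_{1,1}\boldsymbol{a}+X_{1,2})\\ X_{2,1}&X_{2,2}+pX_{2,1}\boldsymbol{a}\end{pmatrix},
\]
and the factorization \eqref{E:bc} then isolates exactly the $U$-factor displayed in \eqref{M:act}, the remaining factor lying in $P_r$.

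The key technical point to check is that $X_{2,2}+pX_{2,1}\boldsymbol{a}$ is invertible in $\mathrm{M}_{n-1}(\V/p^{nr}\V)$ so that the expression \eqref{M:act} is well defined. Because the element \eqref{M:gm} lies in $P\subseteq G_r,$ its reduction modulo $p$ has first row $(\overline{x}_{1,1},0,\dots,0)$ and determinant $1,$ so cofactor expansion along that row forces $\det(X_{2,2})$ to be invertible modulo $p.$ Since $pX_{2,1}\boldsymbol{a}\equiv 0\pmod{p},$ the matrix $X_{2,2}+pX_{2,1}\boldsymbol{a}$ is also invertible modulo $p,$ hence invertible in $\V/p^{nr}\V.$

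The main obstacle is really just bookkeeping: one has to be careful about which ring each entry lives in, since $\boldsymbol{a}$ has coordinates only in $\V/p^{nr-1}\V$ while $p\boldsymbol{a}$ makes sense in $\V/p^{nr}\V,$ and the products $x_{1,1}\boldsymbol{a}$ and $X_{2,1}\boldsymbol{a}$ must be read accordingly. Once this is kept straight, there is no real content beyond the matrix multiplication and the application of \eqref{E:bc}.
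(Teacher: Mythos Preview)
Your proof is correct and follows exactly the approach the paper uses: multiply \eqref{M:gm} by \eqref{M:par} and apply the factorization \eqref{E:bc}. You have in fact supplied more detail than the paper's one-line proof, including the explicit product and the verification that $X_{2,2}+pX_{2,1}\boldsymbol{a}$ is invertible.
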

\begin{proof}
To see this just apply the factorization formula \eqref{E:bc} to the product of the matrix \eqref{M:gm} and the parametrized matrix \eqref{M:par}.
\end{proof}
Theorem~\ref{T:do} and the corollary following offer some hope of computing the global sections in line bundles on $\Lrn$. If $\mathcal{L}$ is a line bundle on $\Lrn$ it is certainly reflexive and so by Main Theorem~\ref{M:th} and the well known classical result on reflexive modules over normal domains, $H^0(\Lrn,\mathcal{L})=H^0(X_0,\mathcal{L}|X_0)$. The same \emph{does not} apply to higher cohomology. The proposition has a very interesting interpretation which clearly can be generalized to other generalized Schubert cells. It is to be noted that this is an equally valid and novel way of viewing Schubert cells in the affine Grassmannian and other affine infinite dimensional homogeneous spaces. The proofs that do not apply with no modification apply with obvious and minimal modification. This is not our concern in this work so we leave it for another study. The interpretation is:
\begin{remark}\label{R:M}
We may consider the $\Spec (\V/p^{nr}\V)$-scheme $\mathbb{P}^{n-1}_{\V/p^{nr}\V}$. The $\V/p^{nr}\V$-points of this scheme can be thought of as rank $n-1$ free $\V/p^{nr}\V$-direct summands of $(\V/p^{nr}\V)^n$ and $\X_0$ can be thought of as the Greenberg scheme associated to this scheme with respect to the ring scheme, $\V/p^{nr}\V$ (not the localized Greenberg functor of \cite{me}). The map $X_0\rightarrow \mathbb{P}_k^{n-1}$ can be regarded as the map which sends the $\V/p^{nr}\V$-direct summand $L$ to $L+pF/pF$ in $F/pF$.
\end{remark}

We write $\pi:X_0\rightarrow \mathbb{P}_k^{n-1}$ for the projection. The most useful consequence of these observations is:
\begin{lemma}\label{L:rs}
Let $\mathcal{F}$ be a reflexive coherent sheaf on $\Lrn $. Then $H^0(\Lrn,\mathcal{F})=H^0(\mathbb{P}^{n-1}_k,\pi_{*}(\mathcal{F}|X_0))$.
\end{lemma}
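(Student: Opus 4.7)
The plan is to combine two standard facts: a codimension-two extension principle for reflexive sheaves on normal varieties, and the adjunction between pushforward and restriction. The first step will reduce computation of global sections of $\mathcal{F}$ to computation over the smooth open stratum $X_0$, and the second will identify those sections with global sections of the pushforward along $\pi$.

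First I would recall that by the Main Theorem~\ref{M:th}, $\Lrn$ is normal, and by part (3) of Lemma~\ref{L:or}, the complement $\Lrn \setminus X_0$ has codimension two. I would then invoke the fact recalled at the opening of this section: for any finitely generated torsion-free module $N$ over a normal domain $R$, one has $N^{**} = \bigcap_{\mathfrak{p}\in \mathcal{M}} N_{\mathfrak{p}}$, where $\mathcal{M}$ is the set of height-one primes. Sheafifying, this says that for a reflexive coherent sheaf $\mathcal{F}$ on a normal variety, $\mathcal{F}$ agrees with $j_*(\mathcal{F}|U)$ for any open $j : U \hookrightarrow \Lrn$ whose complement has codimension $\geq 2$. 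Applying this to $U = X_0$ gives a canonical isomorphism
\[
H^0(\Lrn,\mathcal{F}) \xrightarrow{\;\sim\;} H^0(X_0,\mathcal{F}|X_0).
\]

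Second, I would use that $\pi: X_0 \rightarrow \mathbb{P}_k^{n-1}$ is a morphism (Theorem~\ref{T:do}), so by the defining adjunction of the pushforward functor,
\[
H^0(X_0,\mathcal{F}|X_0) = H^0\!\left(\pi^{-1}(\mathbb{P}_k^{n-1}),\,\mathcal{F}|X_0\right) = H^0\!\left(\mathbb{P}_k^{n-1},\,\pi_*(\mathcal{F}|X_0)\right).
\]
Combining the two displays yields the claim.

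The argument is essentially formal once the two inputs are in place; there is no real obstacle. The only point requiring care is the sheaf-theoretic formulation of the codimension-two extension property, which uses that reflexivity is equivalent to the double-dual condition locally and that normality of $\Lrn$ guarantees that the intersection formula $N^{**} = \bigcap_{\mathfrak{p}} N_{\mathfrak{p}}$ extends across codimension-two loci. Since $\pi$ being affine (again Theorem~\ref{T:do}) makes $\pi_*(\mathcal{F}|X_0)$ a quasi-coherent sheaf on $\mathbb{P}_k^{n-1}$, the resulting identification of global sections is unambiguous.
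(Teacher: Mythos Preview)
Your proof is correct and follows essentially the same route as the paper: first use normality of $\Lrn$ and codimension two of the complement of $X_0$ to identify $H^0(\Lrn,\mathcal{F})$ with $H^0(X_0,\mathcal{F}|X_0)$, then identify the latter with $H^0(\mathbb{P}_k^{n-1},\pi_*(\mathcal{F}|X_0))$. The only difference is that the paper invokes the Leray spectral sequence together with affineness of $\pi$ for the second step (obtaining in fact $H^i(\mathbb{P}_k^{n-1},\pi_*(\mathcal{F}|X_0))=H^i(X_0,\mathcal{F}|X_0)$ for all $i$), whereas you use the bare definition of $\pi_*$, which already gives the $H^0$ identity for an arbitrary morphism; affineness is not needed for the statement as written.
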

\begin{proof}
The sheaf $\mathcal{F}$ is reflexive and $\Lrn$ is normal by Main Theorem~\ref{M:th}. Since the complement of $X_0$ is of codimension $2$, \[H^0(\Lrn,\mathcal{F})=H^0(X_0,\mathcal{F}|X_0).\] On the other hand, $\pi$ is affine and so by the Leray spectral sequence, $H^i(\mathbb{P}_k^{n-1}, \pi_{*}(\mathcal{F}|X_0))=H^i(X_0,\mathcal{F}|X_0)$.
\end{proof}

\begin{remark}\label{R:uhp}
We take note of an amusing analogy suggested by \eqref{M:act}. We consider the maximal parabolic $P\subseteq SL(n, \V/p^{nr}\V )$. We write the element $X$ of $P$ in the form:
\begin{equation}\label{lfa}
X=
\left(
\begin{matrix}
 x_{1,1} & X_{1,2} \\
 X_{2,1} & X_{2,2}
\end{matrix}
\right).
\end{equation}
In this expression, $x_{1,1}$ is a unit in $\V/p^{nr}\V ,\: X_{1,2}$ is a row of length $n-1$ with entries in $p\V/p^{nr}\V ,\: X_{2,1}$ is a column with entries in $\V/p^{nr}\V $ and $X_{2,2}$ is an invertible matrix with entries in $\V/p^{nr}\V $. Define an action of $X$ on the row $u\in (p\V/p^{nr}\V)^{n-1}$ by the equation:
\begin{equation}\label{E:hpa}
X\cdot u=(x_{1,1}u+X_{1,2})(X_{2,1}u+X_{2,2})^{-1}.
\end{equation}
Notice that the matrix in the second parentheses is invertible because $X_{2,2}$ is invertible and the first term is in $(p\V/p^{nr}\V)^{n-1}$. It follows from \eqref{M:par} and \eqref{M:act} that this is an action. Then $X$ stabilizes $0$ only if $X_{1,2}X_{2,2}^{-1}=0$,
that is, if $X_{1,2}=0$. That is, the stabilizer of $0$ is
$P_r$ and the action of $P$ on $U=(p\V/p^{nr}\V)^{n-1}$ is a kind of $p$-adic modular action.
\end{remark}

For $j\geq 1$, let $\Gamma_j$ denote the group of matrices in $\SL (n,\V)$ congruent to $I_n$ mod $p^{j}$, and let $\overline{\Gamma}_j=\Gamma_j/\Gamma_{nr}\subseteq \SL(n,\V/p^{nr}\V)$.  Then $\overline{\Gamma}_1$ is the unipotent radical of the $k$-group $\SL(n,\V/p^{nr}\V)$ with semisimple part $\SL(n,k)$.

\section{The Restricted Lie Algebra of $\SL(n,\V)$; the Canonical Bundle}\label{RLA}
In order to understand the ample generator of the Picard group of $\Lrn$, we must describe the Lie algebras of the proalgebraic group $\SL(n,\V)$, and the subgroups of it which appear in the discussions above. These are group schemes over a field of positive characteristic and so one must contend with Frobenius twists and restricted Lie algebras. For a representation $\rho$ on $V$ of a group over $k$, its $\nu$'th Frobenius twist is the representation on $V,$ the representing transformations of which are just matrices with entries $p^{\nu}$'th powers of the entries of the corresponding representing transformation of $\rho.$ It is written $V^{[p^{\nu}]}.$ 

Restricted Lie algebras are Lie algebras with a formal $p$'th power. In Lie algebras of groups over fields of positive characteristic, the operation is just $p$'th power iteration of invariant vector fields. As we shall work closely with them we recall the definition. The Lie algebra $L$ over $k$ is \emph{restricted} if it is equipped with a formal $p$'th power $x\mapsto x^{[p]}$ such that $(ax)^{[p]}=a^px^{[p]}$ and such that $(x+y)^{[p]}=x^{[p]}+y^{[p]}+\sum_{j=1}^{p-1}s_j(x,y)$, where the $s_j$ are Lie polynomials defined as follows. Extend $L$ to a Lie algebra over the polynomial ring $k[T]$ and write the expression:
\begin{equation}\label{D:rl}
(ad(Tx+y))^{p-1}(x)=\sum_{i=1}^{p-1}is_i(x,y)T^i.
\end{equation}
Clearly, $s_i$ is a sum of terms of the form $a[z_1,[z_2,[\dots, [z_{p-1},x]\dots ]]]$, where $a\in \mathbb{Z}$ and each $z_j$ can be either $x$ or $y$. The $s_i$ are called the Jacobson polynomials. Since the Jacobson polynomials are Lie polynomials, if $\tau:L\rightarrow L'$ is a Lie algebra homomorphism,
\begin{equation}\label{lp}
\tau(s_i(x,y))=s_i(\tau(x),\tau(y)),\;i=1,\dots ,p-1.
\end{equation}

Note that $\SL(n,\V)$ contains its big cell in its algebraic Bruhat decomposition as an $\V$-group and that the $k$-tangent space to $e$ in this open subset is the Lie algebra of the $k$-group $\SL(n,\V)$. Moreover the big cell is stable under the conjugating action of the maximal $\V$-torus as well as the maximal $k$-torus. This observation allows us to describe the Lie algebra of $\SL(n,\V)$ as a representation of the maximal $k$-torus. This will be our starting point.

Write ${\V}^*$ for the group of units in $\V$ viewed as a $k$-scheme. A typical element of ${\V}^*$ can be written as $\xi(t)+\sum_{j=1}^{\infty}\xi(t_j)^{p^{-j}}p^j$. If we consider this parametrization briefly, it is apparent that ${\V}^*=\mathbb{G}_{m,k}\times U_1$, where $U_1$ is the subgroup of units congruent to $1$ modulo $p$.

Let $A_1$ denote the additive group of Witt vectors congruent to $0$ modulo $p.$ It is a proalgebraic group under addition. 
Note that $2^{3/2}>e.$ Consequently $\ln (2)>2/3$ and so for all $p, \;\ln(p)>2/3.$ Let $v_p$ denote the $p$-adic valuation. Then for all $j$ certainly $v_p(j)\leq \log_p(j)=\frac{\ln(j)}{\ln(p)}<3/2\ln(j).$ In particular $j-v_p(j)\geq j-3/2\ln(j)$ and this later expression clearly tends toward infinity. Hence for $u\in A_1$ the $p$-adic value of $u^j/j$ tends toward infinity. Thus the series $\ln(1+u)=\sum_{j=1}^{\infty}(-1)^{j-1}u^j/j$ is defined and convergent on $U_1=1+A_1.$ In particular the map $u\mapsto \ln(u)$ gives a morphism of groups $U_1\rightarrow A_1.$ Since $u^2$ is of value strictly greater than $u$ for $u\in A_1$ the map may be easily seen to be injective. It is also clear that $\ln(1+p^n\V)\subseteq p^n\V$ whence it induces a map of truncations which is clearly algebraic. It follows that $\ln$ is an albebraic isomorphism from $U_1$ to $A_1.$Thus we may write ${\V}^*=\mathbb{G}_{m,k}\times A_1$.

Consider the $\V$-torus of $\SL(n,\V)$. That is, we consider the set of $\V$-points of the maximal torus of $\SL(n,\V)$. Denote it $T(\V)$. Viewing the $n-1$ fundamental dominant weights as morphisms from the algebraic $k$-group $T(\V)$ to the algebraic $k$-group ${\V}^*$, they establish an isomorphism from $T(\V)$ to $({\V}^*)^{n-1}$, the $(n-1)$-fold product of ${\V}^*$. Since the fundamental dominant weights in this context are merely morphisms of algebraic groups rather than characters ($U_1$ is the unipotent radical of ${\V}^*$), we write these weights $\varpi_i$ for $i=1,\dots ,n-1$, and view them as group morphisms. We write $d\varpi_i$ for the associated map of Lie algebras. Writing $\pi:{\V}^*\rightarrow k^*$ for the natural map, we write $\omega_i=\pi \circ \varpi_i$. Write $T_k$ for the algebraic $n-1$ dimensional split torus. The upshot of our discussion is that $T(\V)=T_k\times A_1^{n-1}$.

Let $\Phi$ denote the set of roots of $\SL(n,\mathbb{Z})$, and let $\Phi_+$ denote the set of positive roots. We fix an ordering on $\Phi$.  Let $\Delta$ be a set of simple positive roots. For $\alpha \in \Phi$, let $U_{\alpha}$ denote the corresponding root subgroup scheme. Then $\SL(n,\V)$ contains each of the root subgroups $U_{\alpha}(\V)$. The dense Bruhat cell in this group scheme can be described as the product $\prod_{\alpha\in \Phi_+}U_{\alpha}(\V)\times T(\V) \times \prod_{\alpha\in \Phi_+}U_{-\alpha}(\V)$, where the product is in the chosen ordering.

The Lie algebra of $\SL(n,\V)$ is, of course a restricted $k$-Lie algebra. Hence its description includes descriptions of two operations, bracket and formal $p$'th power. Moreover, $\SL(n,\V)$ is a proalgebraic group, that is, an inverse limit of finite dimensional algebraic groups. Thus its Lie algebra is an inverse limit of finite dimensional restricted Lie algebras. Thus it is a topological vector space complete in a cofinite linear topology, that is, a topology in which $0$ has a basis of neighborhoods consisting of subspaces of finite codimension.

In addition it admits the adjoint representation of $\SL(n,\V)$. Since $\SL(n,\V)$ contains
$T(\V)$ and hence $T_k$, it also admits a decomposition into $T_k$-weights. We wish to describe each of these structures on this Lie algebra. To facilitate discussion, we shall write $\mathfrak{L}(G)$ for the restricted Lie algebra of the $k$-group $G$.

The topology is determined by a basis of neighborhoods of $0$. The neighborhoods of $0$ are just the Lie algebras of the congruence subgroups $\Gamma_j$. Let $\boldsymbol{\gamma}_j=\lie{\Gamma_j}$. Since the $\Gamma_j$ are normal, the $\boldsymbol{\gamma}_j$ are a basis of neighborhoods of $0$ consisting of ideals defining a separated topology under which $\lie{\SL(n,\V)}$ is complete.
Notice that there is an easily understood formalism associated to the notion of topological Lie algebras of proalgebraic groups. As part of this formalism we note that if the proalgebraic group $G$ is a product $G=H\times M$, then the Lie algebra is the product $\lie{G}=\lie{H}\oplus \lie{M}$.
\begin{definition}\label{D:ws}
We write $\ws{}$ for $\lie{\SL(n,\V)}$, the restricted Lie algebra of the $k$-group $\SL(n,\V)$. We write $\ws{r}$ for the restricted Lie algebra $\lie{\SL(n,\V/p^{nr}\V)}$. \end{definition}

\begin{lemma} There is an unrestricted Lie subalgebra, $\mathfrak{g}_0$ in $\ws{r}$ which maps isomorphically to $\lsl(n,k)$ under the natural map from \linebreak $\lie{\SL(n,\V)}$ to $\lsl(n,k)$induced by the canonical homomorphism $\V \mapsto k$.
\end{lemma}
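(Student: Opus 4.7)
The plan is to realize $\mathfrak{g}_0$ explicitly through the matrix description of $\ws{r}$. Since $\SL(n,\V/p^{nr}\V)$ is cut out inside $\GL(n,\V/p^{nr}\V)$ by $\det = 1$, its Lie algebra sits as the traceless part of $M_n(\lie{\V/p^{nr}\V}) = M_n(\Or/\theta^{nr}\Or)$, where $\Or/\theta^{nr}\Or$ was identified in Section~\ref{CC} as the Lie algebra of the additive $k$-group scheme $\V/p^{nr}\V$. The reduction $\V/p^{nr}\V \to k$ corresponds at the Lie-algebra level to the projection $\Or/\theta^{nr}\Or \to k$ sending $\sum a_i \theta^i$ to $a_0$; applying this entrywise yields the natural map $\ws{r} \to \lsl(n,k)$ whose section we must construct.

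First I would construct the candidate $\mathfrak{g}_0$ as the image of $\lsl(n,k) \hookrightarrow M_n(k) \hookrightarrow M_n(\Or/\theta^{nr}\Or)$, where the second inclusion embeds each scalar $a \in k$ as $a\cdot \theta^0$ (the constant term in the Ore ring). Then I would check that $\mathfrak{g}_0$ is closed under the commutator bracket: since $k$ lies in the degree-zero piece of $\Or/\theta^{nr}\Or$, where the Ore relation $\theta a = a^p \theta$ plays no role, the product of two matrices whose entries are constants in $k$ is just ordinary $k$-matrix multiplication and stays in $M_n(k)$. Thus the bracket of any two elements of $\mathfrak{g}_0$ is again in $M_n(k)$ and is in fact the usual $\mathfrak{gl}(n,k)$ commutator; intersecting with the trace-zero condition gives the Lie subalgebra $\lsl(n,k) \subset \ws{r}$.

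Next I would verify that the composition $\mathfrak{g}_0 \hookrightarrow \ws{r} \to \lsl(n,k)$ is the identity: by construction the entrywise projection $\Or/\theta^{nr}\Or \to k$ sends $a\cdot \theta^0$ to $a$, so it recovers each matrix. Hence $\mathfrak{g}_0$ maps isomorphically onto $\lsl(n,k)$. No $p$-th power structure is invoked anywhere, consistent with the statement that $\mathfrak{g}_0$ is merely an unrestricted Lie subalgebra.

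The main delicate point is justifying the identification of $\ws{r}$ with the trace-zero matrices over $\Or/\theta^{nr}\Or$, which requires reconciling the commutative Witt-vector multiplication that defines $\SL(n,\V/p^{nr}\V)$ as a $k$-group scheme with the noncommutative multiplication on $\lie{\V/p^{nr}\V} = \Or/\theta^{nr}\Or$ that governs the Lie bracket. For our purposes this subtlety is invisible: the entries of elements of $\mathfrak{g}_0$ are scalars in $k$, which commute with everything in sight, so no Frobenius twisting appears when computing brackets within $\mathfrak{g}_0$.
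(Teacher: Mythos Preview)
Your proposal takes a genuinely different route from the paper, but it has a real gap at precisely the point you flag as delicate. You assert that $\ws{r}$ can be identified with the trace-zero part of $M_n(\Or/\theta^{nr}\Or)$ and that on the subspace $M_n(k)$ the Lie bracket is the ordinary matrix commutator. Your justification is that ``entries of elements of $\mathfrak{g}_0$ are scalars in $k$, which commute with everything in sight.'' But this argument presupposes that the bracket on $\ws{r}$ \emph{is} the associative commutator in $M_n(\Or/\theta^{nr}\Or)$, which is exactly what is in question; you are assuming the conclusion. There is no ring homomorphism $k\to\V/p^{nr}\V$ splitting the reduction, hence no group-scheme section $\GL(n,k)\hookrightarrow\GL(n,\V/p^{nr}\V)$, so there is no a priori reason your degree-zero subspace should be bracket-closed. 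Indeed, a direct computation with $k[\epsilon,\epsilon']$-points shows that the group commutator of the naive ``degree-zero'' lifts of $E_{12}$ and $E_{21}$ already produces, via the carry term in the Witt addition polynomial $\Phi_1$, a nonzero contribution in the first Witt component of the diagonal entries; so the coordinate-wise degree-zero embedding is \emph{not} closed under the bracket of $\ws{r}$.

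The paper's argument is entirely different and avoids any matrix model for the bracket. It works inside the big Bruhat cell, which decomposes as a product of the root subgroups $U_\alpha(\V)$ and the $\V$-torus $T(\V)=T_k\times A_1^{\,n-1}$. In each $\lie{U_\alpha(\V)}\cong\Or$ it singles out the unique $T_k$-weight-$\alpha$ line $kX_\alpha$ complementary to the image of the $p$-th power, and sets $\mathfrak{g}_0=\bigoplus_{\alpha\in\Phi}kX_\alpha\oplus\mathfrak{h}$ with $\mathfrak{h}=\lie{T_k}$. Closure under bracket is then verified weight by weight: $[X_\alpha,X_\beta]$ has $T_k$-weight $\alpha+\beta$, and since the only weights occurring in $\ws{}$ are of the form $p^j\gamma$ with $\gamma\in\Phi\cup\{0\}$ (and in type $A$ no sum of two distinct roots is a prime-power multiple of a root), each bracket is pinned down by comparing with its image in $\lsl(n,k)$. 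This $T_k$-weight bookkeeping is what actually substitutes for the nonexistent ring-level splitting that your approach implicitly invokes.
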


\begin{proof}
By \cite{cl} or \cite{me2}, the Lie algebra $\lie {U_{\alpha}(\V)}$ is
a direct product $\prod_{j\geq 0}kX_i$ with $p$'th power operation $X_i^{[p]}=X_{i+1}.$ Hence $(\lie {U_{\alpha}(\V)})^{[p]}$ is $T$-stable and of codimension one in $\lie {U_{\alpha}(\V)}.$ It hence has a unique $T$-complement $kX_{\alpha}$ which must be of weight $\alpha$ and it follows that $X_{\alpha}^{[p^s]}$ is of weight $p^s\alpha$ because raising to the $p^s$'th power is a $p^s$-linear map. It is clear that $\lie {U_{\alpha}(\V)}$ is just $\prod_{j\geq 0}kX_{\alpha}^{[p^j]}.$ Let $X_{\alpha}^{(j)}=X_{\alpha}^{[p^j]}.$ Then $\lie{U_{\alpha}}(\V)$ is an Abelian Lie algebra with $p$'th power $(X_{\alpha}^{(r)})^{[p]}=X_{\alpha}^{(r+1)}$. By the remarks above, $\lie{T(\V)}=\lie{T_k}\oplus \lie{A_1^{n-1}}$ and the decomposition is unique.  We write $\mathfrak{h}$ for $\lie{T_k}$.The unipotent radical of $\SL(n,\V)$ is $\Gamma_0$ and the natural surjection $\SL(n,\V)\rightarrow \SL(n,k)$ allows us to identify the root data of $\SL(n,\V)$ with that of $\SL(n,k)$. Since $\mathfrak{h}$ is the Cartan subalgebra of $\mathfrak{s}\mathfrak{l}(n,k)$, it admits a basis $\{h_{\alpha}:\,\alpha \in \Delta\}$, where $h_{\alpha}^{[p]}=h_{\alpha}$ and $h_{\alpha}v=\check{\alpha}(\lambda)v$ whenever $v$ is a vector of weight $\lambda$ and where $\check{\alpha}$ is the coroot associated to $\alpha$.

There is a natural $T$-equivariant map
from $\ws{}$ to $\mathfrak{s}\mathfrak{l}(n,k)$ and it must take $X_{\alpha}$ to the corresponding vector in $\mathfrak{s}\mathfrak{l}(n,k).$ Consider $[X_{\alpha},X_{\beta}]$. This product is $T$-stable of $T$ weight $\alpha +\beta$ and, when the sum is in $\Phi$, it maps to the root vector of that weight in $\lsl(n,k)$. Hence it is equal to $X_{\alpha+\beta}$.  In a root system of type $A_n,$ a non-zero sum of two distinct roots is never a multiple of a root and all root strings are of length one.If $\alpha +\beta \notin \Phi,$ and it is not zero, $[X_{\alpha},X_{\beta}]$ would be of weight $\alpha +\beta$ and hence not a multiple of a root. All the weights which occur in $\ws{}$ are of the form $p^j\alpha$ for some $\alpha \in \Phi.$  Hence $\alpha +\beta$ does not occur as a weight in $\ws{}.$ Hence $[X_{\alpha},X_{\beta}]=0$. 
Now consider $[X_{\alpha},X_{-\alpha}]$. Its image in $\lsl(n,k)$ is the element $h_{\alpha}$ in the Lie algebra of the maximal torus. Hence we may write $[X_{\alpha},X_{-\alpha}]=h_{\alpha}+n$ for some $n\in \gamma_1$. Now $[X_{\alpha},X_{-\alpha}]$ is of $T$-weight $0$ and so $n$ is as well. Hence $h_{\alpha} +n$ is the Jordan decomposition of $[X_{\alpha},X_{-\alpha}]$. Consider $[[X_{\alpha},X_{-\alpha}],X_{\beta}]$. On the one hand this is $\check{\alpha}(\beta)X_{\beta}+[n,X_{\beta}]$. On the other it is of $T$-weight $\beta$ and $T$-stable and it maps to the $\check{\alpha}(\beta)$ multiple of the corresponding weight vector in $\lsl(n,k)$ and so $[X_{\alpha},X_{-\alpha}]=\check{\alpha}(\beta)X_{\beta}$. 

It follows that $\coprod_{\alpha \in \Phi}kX_{\alpha}\bigoplus \mathfrak{h}$ is a Lie subalgebra (not restricted) of $\ws{}.$ Denote it $\mathfrak{g}_0.$ The natural surjection of $\ws{}$ on $\mathfrak{s}\mathfrak{l}(n,k)$ restricts to an isomorphism of vector spaces on $\mathfrak{g}_0$ and so it is isomorphic to $\mathfrak{s}\mathfrak{l}(n,k)$ as a (nonrestricted) Lie algebra.\end{proof}

\begin{remark} Clearly, for large $p,\;\mathfrak{g}_0$ is a restricted Lie algebra. In fact for $p>3$, it would appear to be the case. I would guess that the only case of any difficulty is $p=2$.\end{remark}  
 
 By the observations above,
$\lie{T(\V)}=\mathfrak{h}\oplus \lie{A_1^{n-1}}$. Notice that it is Abelian. To write it explicitly, we write $\lie{A_1^{n-1}}=\prod_{\substack{1\leq i\leq n-1\\j>0}}kY_{i,j}$ with the formal $p$'th power $Y_{i,j}^{[p]}=Y_{i,j+1}$. Each $Y_{i,j}$ is of $T_k$-weight $0$. The following summarizes what can be said about $\ws{}$.

\begin{theorem}\label{W:rla}
Assume that $p>2$. Let $\mathfrak{g}=\mathfrak{s}\mathfrak{l}(n,k)$ endowed with the representation of $\SL(n,\V)$ obtained by composing the projection of $\SL(n,\V)$ on $\SL(n,k)$ with the adjoint representation. Let $\mathfrak{g}^{[p^r]}, r>0$ denote the $r$'th Frobenius twist of the representation $\mathfrak{g}.$ Let $\boldsymbol{\gamma}_j=\lie{\Gamma_j}$. Then the sequence of restricted Lie algebras, $\ws{} =\boldsymbol{\gamma}_0\supseteq \boldsymbol{\gamma}_1\cdots \supseteq \boldsymbol{\gamma}_j\supseteq\cdots$, is an exhaustive decreasing filtration of $\ws{}$ by ideals and a basis of neighborhoods of $0$ in a separated complete topology on $\ws{}$. Furthermore:
\begin{enumerate}
\item The associated graded of $\ws{}$ in this filtration, as a representation of $\SL(n,\V)$, is $\mathfrak{g}\oplus \coprod_{j\geq 1}\mathfrak{g}^{[p^j]}$.
\item  $[\ws{},\boldsymbol{\gamma}_j]=(0)$ for all $j\geq 1$.
\item Let $\mathfrak{g}_0$ be the lie algebra constructed above. Then $\mathfrak{g}_0\simeq \mathfrak{g}$ as an unrestricted Lie algebra and a representation.
\item The $T_k$-weight of $\mathfrak{h}$ and of the $Y_{i,j}$ is $0.$ 
\item The weight of $X_{\alpha}^j$ is $p^j\alpha$. 
\end{enumerate}
\end{theorem}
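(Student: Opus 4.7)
The plan is to exploit the Bruhat big cell decomposition
\[
\SL(n,\V) \supseteq \prod_{\alpha\in \Phi_+}U_\alpha(\V) \times T(\V) \times \prod_{\alpha\in \Phi_+}U_{-\alpha}(\V)
\]
to realize $\ws{}$ explicitly and then verify each assertion factor by factor. Because $\Gamma_j \triangleleft \SL(n,\V)$ is a normal proalgebraic subgroup and $\SL(n,\V)=\varprojlim \SL(n,\V/p^j\V)$, the $\boldsymbol{\gamma}_j=\lie{\Gamma_j}$ form an exhaustive decreasing filtration of $\ws{}$ by ideals and a neighborhood basis of $0$ in a cofinite linear topology with respect to which $\ws{}$ is separated and complete. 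The congruence condition $\equiv 1 \pmod{p^j}$ in $U_\alpha(\V)\simeq \V$ picks out exactly the Witt vectors whose components of index $<j$ vanish, so the big cell yields
\[
\boldsymbol{\gamma}_j = \prod_{\alpha\in \Phi}\prod_{r\geq j} k X_\alpha^{(r)} \;\oplus\; \prod_{i=1}^{n-1}\prod_{r\geq j} k Y_{i,r} \qquad (j\geq 1),
\]
with $\mathfrak{h}$ adjoined when $j=0$.

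Items (4) and (5) follow from the Witt multiplication rule $\xi(t)\cdot \xi(s)^{p^{-r}}p^r=\xi(t^{p^r}s)^{p^{-r}}p^r$: conjugation of $U_\alpha(\V)$ by $\xi(t)\in T_k$ is multiplication by $\alpha(\xi(t))\in k^*$, so the $r$-th Witt coordinate rescales by $\alpha(t)^{p^r}$, giving $X_\alpha^{(r)}$ the $T_k$-weight $p^r\alpha$; since $T(\V)$ is abelian, $T_k$ acts trivially on $\mathfrak{h}$ and on every $Y_{i,r}$. Item (3) is precisely the content of the preceding lemma.

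The core work is (2). By bilinearity and continuity it suffices to bracket basis vectors $u\in\{X_\alpha^{(i)},\mathfrak{h},Y_{i',r'}\}$ with basis vectors $v\in \boldsymbol{\gamma}_j$ ($j\geq 1$). The $T_k$-weight of such a bracket is $p^i\alpha+p^k\beta$, and a short mod-$p$ argument using $p>2$ shows this is neither $0$ nor of the form $p^s\gamma$ for $\gamma\in\Phi$ except in the weight-compatible cases $[X_\alpha^{(k)},X_\beta^{(k)}]$ with $k\geq 1$ and $\alpha+\beta\in\Phi$, $[\mathfrak{h},X_\beta^{(k)}]$, and $[Y_{i,r},X_\beta^{(k)}]$ with $r\geq 1$; all other brackets vanish for weight reasons. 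The Cartan bracket is $p^k\check{\alpha}(\beta)X_\beta^{(k)}$, zero in characteristic $p$ as soon as $k\geq 1$. For the two remaining cases the tool is the Chevalley commutator relation $[u_\alpha(s),u_\beta(t)]=u_{\alpha+\beta}(c_{\alpha,\beta}st)$: substituting $s=\xi(\epsilon_1)^{p^{-i}}p^i$ and $t=\xi(\epsilon_2)^{p^{-k}}p^k$ with $\epsilon_1,\epsilon_2$ dual numbers over $k$, the Witt multiplication rule identifies the $(i+k)$-th Witt coordinate of $c_{\alpha,\beta}st$ as $c_{\alpha,\beta}\epsilon_1^{p^k}\epsilon_2^{p^i}$, whose $\epsilon_1\epsilon_2$-coefficient vanishes whenever $\max(i,k)\geq 1$. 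An entirely analogous calculation with the conjugation $t\,u_\beta(s)\,t^{-1}=u_\beta(\beta(t)s)$, using that $\beta(t)-1$ lies in $p^r\V+O(\tau^2)$ when $t$ is a $\tau$-perturbation of the identity in the $Y_{i,r}$-direction, yields a $(r+k)$-th Witt coordinate proportional to $\tau^{p^k}\epsilon_1^{p^r}$, again zero for $r\geq 1$. This sequence of Chevalley/Witt calculations is where the main technical burden lies; everything else is weight bookkeeping.

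Item (1) is then assembled from the others. The quotient $\boldsymbol{\gamma}_j/\boldsymbol{\gamma}_{j+1}$ has $k$-basis $\{X_\alpha^{(j)}:\alpha\in\Phi\}$ together with $n-1$ Cartan generators, of total dimension $n^2-1=\dim\mathfrak{g}$. By (2), the bracket action of $\boldsymbol{\gamma}_1$ on each graded piece is zero, so the pro-unipotent $\Gamma_1$ acts trivially in the adjoint representation on $\boldsymbol{\gamma}_j/\boldsymbol{\gamma}_{j+1}$ and the $\SL(n,\V)$-action factors through $\SL(n,k)$. The $T_k$-weights read off from (4) and (5) are exactly $\{p^j\alpha:\alpha\in\Phi\}$ together with $0$ of multiplicity $n-1$, matching $\mathfrak{g}^{[p^j]}$; since the adjoint representation of $\SL(n,k)$ is determined up to isomorphism by its weight system and a choice of Frobenius twist, the identification $\boldsymbol{\gamma}_j/\boldsymbol{\gamma}_{j+1}\simeq \mathfrak{g}^{[p^j]}$ as $\SL(n,\V)$-representations is forced.
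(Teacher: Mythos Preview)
Your approach to (2) via $T_k$-weights and Chevalley commutators is genuinely different from the paper's. The paper gives a single global argument: for $U\in\Gamma_j$ and $X\in\GL(n,\V)$, the Witt coordinates of $XUX^{-1}$ are polynomials in the $p^j$-th powers of the Witt coordinates of $X$, so any tangent vector at $e$ annihilates them and the adjoint action of $\ws{}$ on $\boldsymbol{\gamma}_j$ is trivial. Your case analysis is more hands-on and, once complete, equally valid; it also makes the role of $p>2$ visible in the weight bookkeeping rather than hidden.

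There is, however, a small omission in your list of weight-compatible brackets: you leave out $[X_\alpha^{(k)},X_{-\alpha}^{(k)}]$ for $k\geq 1$, which has weight $0$ and is not excluded by your mod-$p$ argument. The fix is the same Witt calculation you already use: with $s=\xi(\epsilon_1)^{p^{-k}}p^k$ and $t=\xi(\epsilon_2)^{p^{-k}}p^k$ one has $st=0$ (since $\epsilon_1^{p^k}=0$ for $k\geq 1$), and a direct $2\times 2$ computation shows the group commutator $[u_\alpha(s),u_{-\alpha}(t)]$ is the identity, hence the bracket vanishes. You should also remark that brackets of the form $[X_\alpha^{(i)},Y_{i',r'}]$ with $r'\geq 1$ (weight $p^i\alpha$, not excluded by weights) are covered by antisymmetry and your conjugation argument, since there you only need $r'\geq 1$.

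The more serious issue is in your argument for (1). From $[\boldsymbol{\gamma}_1,\boldsymbol{\gamma}_j/\boldsymbol{\gamma}_{j+1}]=0$ you conclude that the pro-unipotent group $\Gamma_1$ acts trivially on $\boldsymbol{\gamma}_j/\boldsymbol{\gamma}_{j+1}$ and hence that the $\SL(n,\V)$-action factors through $\SL(n,k)$. In characteristic $p$ this inference is invalid: a connected unipotent group can act nontrivially while its Lie algebra acts trivially (e.g.\ $\mathbb{G}_a$ acting on $k^2$ via $t\mapsto\left(\begin{smallmatrix}1&t^p\\0&1\end{smallmatrix}\right)$). The paper avoids this trap by computing at the group level: writing elements of $\Gamma_j$ as $1+\xi(M)^{p^{-j}}p^j+u_{j+1}$ with $M$ traceless identifies $\Gamma_j/\Gamma_{j+1}$ directly with the $j$-th Frobenius twist of the additive group of $\mathfrak{sl}(n,k)$ as an $\SL(n,\V)$-variety, and passing to Lie algebras gives $\boldsymbol{\gamma}_j/\boldsymbol{\gamma}_{j+1}\simeq\mathfrak{g}^{[p^j]}$. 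Your Lie-algebra-only route to (2) does not by itself yield this group-level statement, so you need a separate computation of the conjugation action of $\SL(n,\V)$ on $\Gamma_j/\Gamma_{j+1}$ to close the gap in (1).
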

\begin{proof}
The discussion preceding the numbered items requires no proof. We begin with (1). It is clear that the quotient $\boldsymbol{\gamma}_j/\boldsymbol{\gamma}_{j+1}$ is just the Lie algebra of the quotient $\Gamma_j/\Gamma_{j+1}$. This is a linear algebraic group with an $\SL(n,k)$-action induced by conjugation. The elements of $\Gamma_j$ can be written in the form $1+\xi(M)^{p^{-j}}p^j+u_{j+1}$, where $M\in \mathfrak{s}\mathfrak{l}(n,k)$ and $u_{j+1}$ is a matrix all of whose entries are of valuation at least $j+1$ and the notation, while undefined, is self explanatory. Noting that $\Gamma_{j+1}$ admits a similar description with $j$ replaced by $j+1$, and passing to the quotient, it is clear that $\Gamma_j/\Gamma_{j+1}$ is isomorphic to the $j$'th Frobenius twist of the linear functions on the additive group of $k$-matrices of trace $0$. It follows that $\boldsymbol{\gamma}_j/\boldsymbol{\gamma}_{j+1}$ is isomorphic to $\mathfrak{g}^{[p^j]}$.

We proceed to (2) which is the key observation. Now $\SL(n,\V)\subseteq \GL(n,\V)$, and the same for the corresponding Lie algebras and so it suffices to establish this result for $\wg{}$, where the latter is the Lie algebra of the pro-$k$-group $\GL(n,\V)$. We write $\boldsymbol{\gamma}'_j,\: \Gamma'_j$ for the subobjects associated to $\GL(n,\V)$ parallel to the corresponding symbols for $\SL(n,\V)$. Write $\mathcal{M}^j$ for the $n\times n$ matrices congruent to $0$ modulo $p^{j-1}$.

First, suppose that $u\in\V$ is of valuation at least $j$ so that $u=\sum_{i\geq j}\xi(u_i)^{p^{-i}}p^i$. If $x=\sum_{i\geq 0}\xi(x_i)^{p^{-i}}p^i$, then the product $xu$ has as Witt coordinates polynomials with $u$ coefficients, but in the quantities $x_i^{p^j}$. This can be understood as a finite computation since the coefficients of a Witt vector at valuation $m$ are uniquely determined by the truncation modulo $p^{m+1}$. Thus we may assume that $u$ is a finite sum of terms $\xi(u_i)^{p^{-i}}p^i, \:i\geq j$. But then it suffices to consider terms of the form $x(\xi(u_i)^{p^{-i}}p^i)$ as the coefficients of $xu$ will be polynomials in the Witt coefficients of these elements. But $x(\xi(u_i)^{p^{-i}}p^i)=\sum_{r\geq 0}\xi(x_r^{p^i}u_i^{p^r})^{p^{-r-i}}p^{r+i}$. That is, the coefficients of $x(\xi(u_i)^{p^{-i}}p^i)$ are polynomials in the $x_r^{p^i},\;i\geq j$, that is to say, they are polynomials in the $x_r^{p^j}$.

Now suppose that $U=(u_{r,s})\in \mathcal{M}^j$ and that $X=(x_{r,s}),\, Y=(y_{r,s})$ are in $\mathcal{M}^0$.
Then the $(l,m)$-entry of $XUY$ is $\sum x_{l,r}y_{s,m}u_{r,s}$. It is hence a polynomial in $p^j$'th powers of the entries of $X$ and $Y$. In particular $XUX^{-1}$ has as its Witt coefficients rational functions in the $p^j$'th powers of the Witt coefficients of $X$. Consider the action of $\ws{}$ on the coordinate ring of $\Gamma_j$. First note that $\Gamma_j=1+\mathcal{M}^j$. If $U\in \mathcal{M}^j$ is a matrix with indeterminate Witt coefficients and $X$ is a general element of $\GL(n,\V)$, then the Witt coefficients of $X(1+U)X^{-1}$ are polynomials in $p^j$'th powers of the entries of $X$. Hence if a tangent vector at $e$ in $\GL(n,\V)$ is applied to such an expression by its operation on the $x_{i,j,m}$, the result will be null in all cases. That is, the operation of $\ws{}$ on the coordinate ring of $\Gamma_j$ is trivial. Since the adjoint action of $\ws{}$ on $\boldsymbol{\gamma}_j$ is the differential of its action on the coordinate ring of $\Gamma_j$, it follows that $[\ws{},\boldsymbol{\gamma}_j]=0$ for all $j\geq 1$. This establishes (2).

To prove (3) first consider all cases where the characteristic is at least three or when the cnaracteristic is at least two. By the computations above and by item (2), 
\[
[\ws{},\ws{}]\simeq [\ws{}/\boldsymbol{\gamma}_1,\ws{}/\boldsymbol{\gamma}_1]\simeq [\mathfrak{g}_0,\mathfrak{g}_0]\simeq \mathfrak{g}_0.
\]

Hence $\mathfrak{g}_0=[\ws{},\ws{}].$ This is clearly stable under the adjoint representation. For $\mathfrak{s}\mathfrak{l}(2,k)$ when $k$ is of characteristic two one establishes the same result by an awkward hand computation. 
\end{proof}

Before proceeding we need a certain analysis of central extensions of restricted Lie algebras. Let $0\rightarrow \mathfrak{m}\rightarrow \widetilde{\mathfrak{g}}\overset{\phi}\rightarrow \mathfrak{g}\rightarrow 0$ denote a central extension of restricted Lie algebras which is trivial as an extension of Lie algebras. I will call such an extension \emph{Lie trivial} and we shall often denote it $\phi$.

If $\phi:\widetilde{\mathfrak{g}}\rightarrow \mathfrak{g}$ is an extension of $\mathfrak{g}$ by $\mathfrak{m}$, there is a section $\sigma:\mathfrak{g}\rightarrow \widetilde{\mathfrak{g}}$ which is just a Lie algebra morphism. 

\begin{lemma}\label{W:rc}
Let $\phi:\widetilde{\mathfrak{g}}\rightarrow \mathfrak{g}$ be a Lie trivial central extension of $\mathfrak{g}$ by $\mathfrak{m}$. Let $\sigma:\mathfrak{g}\rightarrow \widetilde{\mathfrak{g}}$ denote a Lie algebra section of $\phi$. For $x\in
\mathfrak{g}$, let $\beta(x)=\sigma(x)^{[p]}-\sigma(x^{[p]})$. Then $\beta$ is a $p$-linear map from $\mathfrak{g}$ to $\mathfrak{m}$.
\end{lemma}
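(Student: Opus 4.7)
The plan is to verify three properties in order: (i) the image of $\beta$ lies in $\mathfrak{m}$; (ii) $\beta(ax)=a^p\beta(x)$; (iii) $\beta(x+y)=\beta(x)+\beta(y)$. The first two are one-line checks; the additivity in (iii) is where the structure of restricted Lie algebras really enters, and it is the main point, though not really an obstacle since the Jacobson polynomials were designed for exactly this situation.

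For (i), I would apply $\phi$ to $\beta(x)=\sigma(x)^{[p]}-\sigma(x^{[p]})$. Since $\phi$ is a morphism of restricted Lie algebras it commutes with $(\,\cdot\,)^{[p]}$, so $\phi(\sigma(x)^{[p]})=(\phi\sigma(x))^{[p]}=x^{[p]}$, while $\phi(\sigma(x^{[p]}))=x^{[p]}$ because $\sigma$ is a section. Hence $\phi(\beta(x))=0$, i.e. $\beta(x)\in\ker\phi=\mathfrak{m}$.

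For (ii), I would use that $\sigma$ is $k$-linear (it is a Lie algebra morphism) together with the restricted axiom $(ay)^{[p]}=a^py^{[p]}$ applied in $\widetilde{\mathfrak{g}}$, giving
\[
\beta(ax)=(a\sigma(x))^{[p]}-\sigma(a^px^{[p]})=a^p\sigma(x)^{[p]}-a^p\sigma(x^{[p]})=a^p\beta(x).
\]

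For (iii), I would expand both terms of $\beta(x+y)$ using definition (\ref{D:rl}) of the Jacobson polynomials. In $\widetilde{\mathfrak{g}}$,
\[
(\sigma(x)+\sigma(y))^{[p]}=\sigma(x)^{[p]}+\sigma(y)^{[p]}+\sum_{j=1}^{p-1}s_j(\sigma(x),\sigma(y)),
\]
while in $\mathfrak{g}$, $(x+y)^{[p]}=x^{[p]}+y^{[p]}+\sum_{j=1}^{p-1}s_j(x,y)$; applying $\sigma$ and using that $\sigma$ is a Lie algebra homomorphism together with equation (\ref{lp}) gives
\[
\sigma((x+y)^{[p]})=\sigma(x^{[p]})+\sigma(y^{[p]})+\sum_{j=1}^{p-1}s_j(\sigma(x),\sigma(y)).
\]
Subtracting, the Jacobson sums cancel because they coincide term-by-term, leaving $\beta(x+y)=\beta(x)+\beta(y)$. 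The whole argument hinges on the fact that the $s_j$ are Lie polynomials, so $\sigma$'s preservation of brackets is exactly what is needed to make the two sums of Jacobson correction terms equal; the centrality of $\mathfrak{m}$ and Lie-triviality of the extension never explicitly appear in the calculation, but they ensure that the ambient Lie algebra structure is the one coming from $\mathfrak{g}$ via $\sigma$.
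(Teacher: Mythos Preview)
Your proof is correct and follows essentially the same approach as the paper: verify $\beta$ lands in $\mathfrak{m}$ via $\phi\circ\sigma=\id_{\mathfrak{g}}$ and the restrictedness of $\phi$, note $p$-semilinearity is immediate, and establish additivity by expanding both $(\sigma(x)+\sigma(y))^{[p]}$ and $\sigma((x+y)^{[p]})$ with the Jacobson identity and cancelling the $s_j$ terms using \eqref{lp}. The paper presents the additivity as a single chained computation of $\beta(x+y)-\beta(x)-\beta(y)$, but the content is identical to yours.
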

\begin{proof}
That the values of $\beta$ lie in $\mathfrak{m}$ follows from the facts that $\phi$ is a restricted Lie algebra morphism and that $\phi \circ \sigma= \id_{\mathfrak{g}}$. It is clear that it is 
$p$-linear. We prove that it is additive.
\begin{align}\label{rl:b}
\beta(x+y)-\beta(x)-\beta(y)&=\sigma(x+y)^{[p]}-\sigma((x+y)^{[p]})\\
&\quad -\sigma(x)^{[p]}+\sigma(x^{[p]})-\sigma(y)^{[p]}+\sigma(y^{[p]}) \notag \\
&=\sigma(x)^{[p]}+\sigma(y)^{[p]}+\sum_{i=1}^{p-1}s_i(\sigma(x),\sigma(y)) \notag \\ 
&\quad -\sigma(x^{[p]}+y^{[p]}+\sum_{i=1}^{p-1}s_i(x,y) ) \notag \\ &\qquad -\sigma(x)^{[p]}+\sigma(x^{[p]})-\sigma(y)^{[p]}+\sigma(y^{[p]}) \notag \\
&=\sum_{i=1}^{p-1}s_i(\sigma(x),\sigma(y))-\sum_{i=1}^{p-1}\sigma(s_i(x,y)). \notag
\end{align}
By Equation~\eqref{lp}, the final expression of \eqref{rl:b} is null since for each $i$, $s_i(\sigma(x),\sigma(y))=\sigma(s_i(x,y))$. That is, $\beta(x+y)=\beta(x)+\beta(y)$.
\end{proof}

What remains is to determine the automorphisms of a Lie trivial extension. To this end we define the \emph{$p$-center} of a restricted Lie algebra:
\begin{definition}
Let $L$ be a restricted Lie algebra. The $p$-\emph{center} of $L$ is the set $\{x\in L:\; [x,L]=0, x^{[p]}=0\}$.  Write $L_p$ for the $p$-center of $L$. When $L$ is Abelian it is called strongly Abelian if $L=L_p$.
\end{definition}

Having made this definition, we leave to the reader the exercise of proving that two $p$-linear maps $\beta$ and $\beta'$ from $\mathfrak{g}$ to $\mathfrak{m}$ yield isomorphic Lie trivial restricted extensions if and only if they differ by a map into the $p$-center of $\mathfrak{m}$. This is a slightly eccentric description  of the set of extensions (see \cite{gh}). Hochschild alters central extensions by a (nonrestricted) Lie cocycle so that he can regard all central extensions as extensions by strongly Abelian centers but in our case the center is endowed with an important canonically defined restricted structure.
\begin{theorem}\label{T:ad}
Assume that $p>2$. Notation is as in Theorem~\ref{W:rla}. Then there is an $\SL(n,\V)$-equivariant Lie (i.e. unrestricted) splitting $\sigma: \mathfrak{g}\rightarrow \ws{}$. The map $\beta$ associated to $\sigma$ as in Lemma~\ref{W:rc} is an $\SL(n,\V)$-equivariant morphism. Moreover,
\begin{enumerate}
\item The map $\beta$ is injective so that it can be viewed as an $\SL(n,\V)$-morphism from Frobenius twist $\mathfrak{g}^{[p]}$ into $\ws{}$.
\item There are canonical nontrivial $T_k$-weight spaces of $\ws{}$ which permit us to write:
\[
\ws{}=\mathfrak{h}\oplus  \prod_{\substack{1\leq i\leq n-1\\j>0}}kY_{i,j} \oplus \prod_{\substack{\alpha\in \Phi\\j\geq 0}}kX_{\alpha}^j.
\]
\item As a nonrestricted Lie algebra or purely as the adjoint representation of $\SL(n,\V)$, we may represent $\ws{}$ as the infinite product:
\[
\ws{}=\prod_{j\geq 0}\mathfrak{g}^{[p^j]}.
\]
\end{enumerate}
\end{theorem}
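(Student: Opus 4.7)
The plan is to take $\sigma:\mathfrak{g}\to\ws{}$ to be the composition $\mathfrak{g}\xrightarrow{\sim}\mathfrak{g}_0\hookrightarrow \ws{}$, where $\mathfrak{g}_0$ is the unrestricted Lie subalgebra constructed in the lemma preceding Theorem~\ref{W:rla}. Its $\SL(n,\V)$-equivariance I would deduce from the identification $\mathfrak{g}_0=[\ws{},\ws{}]$ carried out in the proof of Theorem~\ref{W:rla} part~(3); this identification uses $p>2$ and makes $\mathfrak{g}_0$ canonical, hence automatically stable under the adjoint representation. Since $[\ws{},\boldsymbol{\gamma}_1]=0$ by item~(2) of Theorem~\ref{W:rla}, the projection $\ws{}\twoheadrightarrow \mathfrak{g}$ is a Lie-trivial central extension, so Lemma~\ref{W:rc} applies and produces the $p$-linear map $\beta(x)=\sigma(x)^{[p]}-\sigma(x^{[p]})$ with values in $\boldsymbol{\gamma}_1$. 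Equivariance of $\beta$ is then immediate from equivariance of $\sigma$ and of the formal $[p]$-power.

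For part~(1) I would reinterpret $\beta$ as the $k$-linear map $\mathfrak{g}^{[p]}\to \ws{}$ corresponding to the $p$-linear $\beta$ and evaluate it on a Chevalley-style basis. Since an off-diagonal elementary matrix squares to zero, $X_\alpha^{[p]}=0$ in $\lsl(n,k)$, and therefore $\beta(X_\alpha)=\sigma(X_\alpha)^{[p]}=X_\alpha^{(1)}$, the nonzero $T_k$-weight $p\alpha$ element in $\boldsymbol{\gamma}_1$ produced by the construction of the lemma preceding Theorem~\ref{W:rla}. Composing $\beta$ with the quotient $\boldsymbol{\gamma}_1\twoheadrightarrow\boldsymbol{\gamma}_1/\boldsymbol{\gamma}_2\cong \mathfrak{g}^{[p]}$ therefore restricts to a nonzero map between the weight-$p\alpha$ lines for every $\alpha\in\Phi$; a weight argument promotes this to the asserted embedding.

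For part~(2), I would invoke the Bruhat big cell of the proalgebraic group $\SL(n,\V)$ to get
\[
\ws{}=\lie{T(\V)}\,\oplus\,\bigoplus_{\alpha\in\Phi}\lie{U_\alpha(\V)},
\]
and then substitute $\lie{T(\V)}=\mathfrak{h}\oplus\prod_{i,j}kY_{i,j}$ (from the factorization $T(\V)=T_k\times A_1^{n-1}$ together with the logarithm isomorphism $U_1\simeq A_1$ established in the discussion preceding Definition~\ref{D:ws}) together with $\lie{U_\alpha(\V)}=\prod_{j\geq 0}kX_\alpha^{(j)}$. The weight claims~(4) and~(5) are then automatic: $T_k$ centralizes $T(\V)$ and hence acts trivially on $\mathfrak{h}$ and on every $Y_{i,j}$, while on $X_\alpha^{(j)}=X_\alpha^{[p^j]}$ the $T_k$-weight is $p^j\alpha$ because iterating the restricted $[p]$-power is a $p^j$-linear operation.

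Finally, for part~(3) I would exhibit, for each $j\geq 0$, the subspace $V_j$ spanned by $\{X_\alpha^{(j)}:\alpha\in\Phi\}\cup\{Y_{i,j}:1\leq i\leq n-1\}$ (with $V_0$ including $\mathfrak{h}$ in place of the nonexistent $Y_{i,0}$). The $T_k$-weights of $V_j$ coincide with those of $\mathfrak{g}^{[p^j]}$, and because $\boldsymbol{\gamma}_1$ is abelian (again by $[\ws{},\boldsymbol{\gamma}_1]=0$) the vector-space direct product $\prod_{j\geq 0}V_j$ is automatically a direct product of (nonrestricted) Lie algebras; the canonical $\SL(n,\V)$-equivariant maps $V_j\to\boldsymbol{\gamma}_j/\boldsymbol{\gamma}_{j+1}\cong \mathfrak{g}^{[p^j]}$ are then isomorphisms, giving the claimed decomposition. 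The main obstacle I expect is arguing that the $V_j$ are genuinely $\SL(n,\V)$-stable and not merely $T_k$-stable: while the differential of $\Gamma_1$ acts trivially on each graded piece, lifting this to a compatible splitting of the filtration at the group level requires using the iterated Frobenius maps $\beta,\beta\circ\beta,\dots$ to splice the twists $\mathfrak{g}^{[p^j]}$ equivariantly, and verifying that the weight-space decomposition of~(2) genuinely survives the non-semisimple part of the $\SL(n,\V)$-action.
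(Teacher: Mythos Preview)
Your proposal is correct and follows essentially the same line as the paper. You construct $\sigma$ via $\mathfrak{g}_0=[\ws{},\ws{}]$ exactly as the paper does, and your proof of (1) via $\beta(X_\alpha)=X_\alpha^{(1)}$ followed by projection to $\boldsymbol{\gamma}_1/\boldsymbol{\gamma}_2\cong\mathfrak{g}^{[p]}$ is the paper's argument (the paper phrases the last step in terms of $\SL(n,\V)$-submodules rather than $T_k$-weights, but the content is the same). For (2) and (3) the paper says only that these are ``self evident,'' so your more explicit arguments via the big-cell decomposition and the subspaces $V_j$ actually supply more detail than the published proof; your final caveat about $\SL(n,\V)$-stability of the $V_j$ is a legitimate point that the paper does not address.
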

\begin{proof}
First recall how $\sigma$ is constructed. The natural surjection $\phi:\ws{}\rightarrow \mathfrak{g}$ restricts to an isomorphism on the derived algebra $[\ws{},\ws{}]=\mathfrak{g}_0$. The derived algebra is $\SL(n,\V)$-stable and the surjection is equivariant. Hence its inverse, that is to say, $\sigma$ is equivariant. The $p$'th power map, which is not linear, is, however, equivariant. By definition, $\beta(x)=x^{[p]}-\sigma(x^{[p]})$. Hence it is a difference of equivariant maps. This establishes the first statement of the theorem.

We proceed to the first numbered item of the theorem. Let $\alpha$ be any root of $\SL(n,\V)$. Let $U_{\alpha,\V}$ denote the corresponding root subgroup of $\SL(n,\V)$. Write $\mathfrak{u}_{\alpha}(\V)$ for its Lie algebra viewed as a Lie subalgebra of $\ws{}$. It admits a basis of elements $\{X_{\alpha,i}:\: i\geq 0\}$ where $X_{\alpha,i}^{[p]}=X_{\alpha,i+1}$. Now the derived algebra $\mathfrak{g}_0=\operatorname{im}(\sigma)$ contains only $X_{\alpha,0}$. Moreover in the classical Lie algebra $\operatorname{sl}(n,k)$ for the corresponding root vector, $X_{\alpha}^{[p]}=0$. It follows that $\sigma(\phi(X_{\alpha,0})^{[p]})=0$ and that $\beta(X_{\alpha,0})=X_{\alpha,1}$. Hence the image of $\sigma(\mathfrak{g})$ under $\beta$ is an $\SL(n,\V)$-module containing the $p$-twisted root vectors $X_{\alpha,1}$. The only $\SL(n,\V)$-submodule of $\mathfrak{g}^{[p]}$ containing these elements is $\mathfrak{g}^{[p]}$ itself. This establishes (1).

The last two items are self evident.
\end{proof}

We may use these results to describe the K\"ahler differentials on the smooth locus of $\Lrn$ as well as the canonical bundle on this homogeneous space. Since $\Lrn$ is locally a complete intersection the canonical bundle is a line bundle and so determined by its restriction to $\Lrn.$This is nothing but a straightforward application of the computations of the last section. The basic formula for the differentials of a homogeneous space is the following. Let $G$ and $H$ be respectively a linear algebraic group and a closed subgroup. Let the Lie algebra of $G$ (respectively $H$) be $\mathfrak{g}$ (respectively $\mathfrak{h}$). Then $\mathfrak{g}/\mathfrak{h}$ is a representation of $H$ under the adjoint action and the tangent bundle is the induced bundle of this representation. Consequently, $\Omega_{(G/H)/k}$ is the bundle induced by the dual, $(\mathfrak{g}/\mathfrak{h})^*.$ The computation of the canonical bundle follows easily.

\begin{proposition}\label{cang} The canonical bundle on $\Lrn$ is a line bundle. It is isomorphic to $\mathcal{L}(-n\frac{p^{nr}-1}{p-1}\lambda_1).$ 
\end{proposition}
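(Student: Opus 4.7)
The plan is to reduce the computation to the smooth locus $X_0 \subseteq \Lrn$, exploit its homogeneous structure $X_0 \cong G_r/P_r$ from Theorem~\ref{T:do}, and then decompose the relevant determinant character using the Frobenius filtration of $\ws{r}$ provided by Theorem~\ref{W:rla}.

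Since $\Lrn$ is locally a complete intersection by Theorem~\ref{M:th}, its dualizing sheaf is invertible, and I identify it with the canonical bundle $\omega_{\Lrn}$. Because $\Lrn$ is normal with singular locus of codimension two, restriction gives an isomorphism $\operatorname{Pic}(\Lrn) \xrightarrow{\sim} \operatorname{Pic}(X_0)$, so it suffices to identify $\omega_{X_0}$. On the homogeneous space $X_0 = G_r/P_r$, one has $\omega_{X_0} \cong G_r \times^{P_r} \det(\mathfrak{g}_r/\mathfrak{p}_r)^{*}$ with $\mathfrak{g}_r := \ws{r}$ and $\mathfrak{p}_r := \lie{P_r}$, and the integer $m$ such that $\omega_{X_0} \cong \mathcal{L}(m\lambda_1)$ is just the $T_k$-weight of this one-dimensional $P_r$-representation.

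Next I invoke Theorem~\ref{W:rla} in its truncated form: the congruence ideals $\boldsymbol{\gamma}_j \cap \ws{r}$ filter $\ws{r}$ with associated graded $\bigoplus_{j=0}^{nr-1}\lsl(n,k)^{[p^j]}$. The same argument applies to $P_r$, whose reduction modulo $p$ is the maximal parabolic $P_0 \subseteq \SL(n,k)$ stabilizing the hyperplane $\operatorname{span}(e_2,\dots,e_n)$, and yields a filtration of $\mathfrak{p}_r$ with associated graded $\bigoplus_{j=0}^{nr-1}\mathfrak{p}_0^{[p^j]}$, where $\mathfrak{p}_0 = \lie{P_0}$. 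Because $\det$ is multiplicative on short exact sequences,
\[
\det(\mathfrak{g}_r/\mathfrak{p}_r)^{*} \;\cong\; \bigotimes_{j=0}^{nr-1} \det\bigl((\lsl(n,k)/\mathfrak{p}_0)^{[p^j]}\bigr)^{*}
\]
as $T_k$-representations.

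Finally, the classical calculation at level zero: $\lsl(n,k)/\mathfrak{p}_0$ has basis $\{E_{1,j}\}_{j=2}^{n}$ with $T_k$-weights $\epsilon_1-\epsilon_j$, so its weight sum is $n\epsilon_1 = n\lambda_1$, and the dual determinant carries weight $-n\lambda_1$ (recovering $\omega_{\mathbb{P}^{n-1}} = \mathcal{O}(-n)$). The $j$-th Frobenius twist multiplies $T_k$-weights by $p^j$, so summing over $j$ yields
\[
-n\sum_{j=0}^{nr-1}p^{j}\,\lambda_1 \;=\; -n\,\frac{p^{nr}-1}{p-1}\,\lambda_1,
\]
which gives the claim. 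The hard step is verifying that the congruence filtration of $\ws{r}$ restricts cleanly to $\mathfrak{p}_r$ with the predicted associated graded; this reduces to the stability of $P_r$ under the congruence subgroup filtration of $G_r$, so the computation in the proof of Theorem~\ref{W:rla} transfers to $P_r$ verbatim.
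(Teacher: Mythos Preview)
Your argument is correct and follows essentially the same route as the paper: both compute the canonical bundle on the smooth locus $G_r/P_r$ as the determinant of $(\ws{r}/\mathfrak{p}_r)^*$, filter this quotient so that the graded pieces are Frobenius twists of the $(n-1)$-dimensional $P_r$-module $\lsl(n,k)/\mathfrak{p}_0$, and sum the resulting weights $-n p^j\lambda_1$. The only difference is packaging: the paper introduces the explicit intermediate subgroups $P(j)=\{X:x_{1,2}\in (p^j\V/p^{nr}\V)^{n-1}\}$ and computes the $P_r$-conjugation action on $P(j)/P(j+1)$ by a direct matrix calculation, whereas you obtain the same filtration by pushing the congruence filtration $\boldsymbol{\gamma}_j$ of Theorem~\ref{W:rla} to the quotient --- and indeed $P(j)=\overline{\Gamma}_j\cdot P_r$, so the two filtrations on $\ws{r}/\mathfrak{p}_r$ coincide.
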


\begin{proof}By the main theorem $\Lrn$ is locally a complete intersection. Hence by, for example, Theorem 7.11 on p. 245 of \cite{ag} it is a line bundle and so a tensor power of $\mathcal{L}(\lambda_1).$ We may compute it on the smooth locus which is the  homogeneous space $\SL(n,\V)/P_r.$ It is the highest exterior power of the sheaf of differentials. The differentials are the bundle induced by the linear dual of $\ws{}/\mathfrak{P}_r.$ This linear dual admits a $P_r$-stable filtration so that the successive quotients are $\boldsymbol{\omega}_0, \boldsymbol{\omega}_0^{[p]},\dots ,\boldsymbol{\omega}_0^{[p^{nr-1}]}.$ 

To see this let $P(j),\;j=0,1,\dots ,nr-1$ denote the set of matrices 
$\left(\begin{smallmatrix}x_{1,1}&x_{1,2}\\x_{2,1}&x_{2,2}\end{smallmatrix}\right)$ in $\SL(n,\V/p^{nr}\V)$ consisting of elements for which $x_{1,2}$ is an $n-1$ row with entries in $p^j\V/p^{nr}\V.$ Now these are all normal in $\SL(n,\V/p^{nr}\V)$ and the quotient 
$P(j)/P(j-1)$ is isomorphic as a group to the additive group $(p^j\V/p^{j+1}\V)^{n-1}.$ (In this notation $P_r$ would just be $P(nr)$ and 
$P(0)=\SL(n,\V/p^{nr}\V).$ Write $\mathfrak{P}(j)$ for the Lie algebra of $P(j).$ We proceed to compute the action of $P_r$ induced by the adjoint action of the stabilizer of the distinguished coset $P_r$ on the quotients $\mathfrak{P}(j)/\mathfrak{P}(j+1).$

Let an arbitrary element of $\SL(n,\V/p^{nr}\V)$ be represented by the matrix $U=\left(\begin{smallmatrix}u_{1,1}&u_{1,2}\\u_{2,1}&u_{2,2}\end{smallmatrix}\right)$ with $u_{1,1}\in \V/p^{nr}\V$ and the rest accordingly. An element of $P_r$ is of the form $X=\left(\begin{smallmatrix}x_{1,1}&0\\x_{2,1}&x_{2.2}\end{smallmatrix}\right).$ Its inverse is $$\left(\begin{matrix}x_{1,1}^{-1}&0\\-x_{1,1}^{-1}x_{2,2}^{-1}x_{2,1}&x_{2,2}^{-1}\end{matrix}\right).$$ Conjugating $U$ by $X,$ the $(1,2)$ entry of the conjugate of $U$ by $X$ is the expression $x_{1,1}u_{1.2}x_{2,2}^{-1}.$ Over $\V/p^{nr}\V$ this is the contragredient of the standard representation of $x_{2,2}\in \GL(n-1,\V/p^{nr}\V)$ twisted by the inverse of its determinant. 
The result of the conjugating action of $\left(\begin{smallmatrix}x_{1,1}&x_{1,2}\\x_{2,1}&x_{2,2}\end{smallmatrix}\right)$ on $\overline{u}\in P(j)/P(j+1)$ is $x_{1,1}\overline{u}x_{2,2}^{-1}.$ Now $\overline{u}\in (p^j\V/p^{j+1}\V )^{n-1}$ and we know that for $t\in k^*$ and a Witt vector in $p^j\V$ the result of multiplication on the initial term is $\xi(t)\cdot \xi(a)^{p^{-j}}p^j=\xi(t^{p^j}a)^{p^{-j}}p^j.$ The quotient of groups $P(j)/P(j+1)$ is just the additive group $p^j\V/p^{j+1}\V)^{n-1}.$ Consequently the result of conjugation by an element of $P_r$ on this quotient is just the $p^j$'th Frobenius twist of the contagredient of the standard representation twisted by the inverse of the determinant. The Lie algebra of this additive group will be the same additive group and the result of the restriction of $\operatorname{ad}(X)$ to this quotient of Lie algebras will just be the same representation. To state this precisely let $\overline{x}_{i,j}$ denote the residue class of $x_{i,j}$ modulo $p$ and let $\overline{u}$ denote an $n-1$ row with entries in $k.$ Let $\left(\begin{smallmatrix}x_{1,1}&0\\x_{2,1}&x_{2,2}\end{smallmatrix}\right)$ operate on this row by the formula $\overline{x}_{1,1}\overline{u}\overline{x}_{2,2}^{-1}.$ This defines a representation which we write $\mathfrak{u}.$ The adjoint representation of $P_r$ on $\ws{}/\mathfrak{P}_r$ thus admits a filtration with associated graded $\coprod_{j=0}^{nr-1}\mathfrak{u}^{[p^j]}.$ Hence the associated graded of $(\ws{}/\mathfrak{P}_r)^*$ in the filtration dual to the filtration above is just $\coprod_{j=0}^{nr-1}(\mathfrak{u}^{[p^j]})^*.$ Finally $ (\mathfrak{u}^{[p^j]})^*= \boldsymbol{\omega}_0^{[p^j]}.$

The top exterior power of $(\ws{}/\mathfrak{P}_r)^*$ induces the top exterior power of the bundle induced by this representation. This top exterior power is the tensor product of the top exterior powers of the terms in its associated graded. Now $\boldsymbol{\omega}_0$ is the representation of $P_r$ corresponding to the lower right $n-1\times n-1$ minor acting on its standard $n-1$-dimensional representation twisted by the class of the determinant of $x_{2,2}$ modulo $p.$ As a character on $P_r$ the determinant of $x_{2,2}$ is the inverse of the residue class of $x_{1,1}$ and this is the first fundamental weight of $\SL(n,\V).$ The top exterior power of an irreducible is trivial so that the weight of the twisted representation is just $-n\lambda_1.$  The Frobenius twist $\boldsymbol{\omega}_0^{[p^s]}$ corresponds to $p^sn\lambda_1.$  The weight of the full tensor product of the top exterior powers of the terms of the associated graded is $-n(1+p+p^2+\cdots +p^{nr-1})\lambda_1=-n\frac{p^{nr}-1}{p-1}\lambda_1.$ Hence in the usual convention for bundles induced by characters, this is the line bundle induced by the weight in the assertion above.
\end{proof}

Let us consider for a moment the infinite Grassmannian over $k.$ It has been extensively studied. It may be thought of as the space parametrizing special $k[[t]]$ lattices of rank $n$ in $k((t))^n.$
It is the homogeneous space over $\SL(n,k((t))$ with stabilizer equal to the maximal parahoric $\SL(n, k[[t]]).$ The generalized Schubert cell corresponding to the diagonal matrix with diagonal entries $t^{(n-1)r}, t^{-r}, \dots ,t^{-r}$ is a particular Schubert cell which we will denote $\mathbb{L}at_r^n(k((t))).$ There is a canonical ample generator of the Picard group corresponding to a character on the subgroup of $\SL(n,k[[t]])$ whose $1,1$ position is nonzero but in which the rest of the first column is zero. Denote that character $\tilde{\lambda}_1.$ The constructions of this section all apply to that situation. We refer to \cite{pic} for computations. 
\begin{corollary}The scheme $\Lrn$ is not isomorphic to the scheme $\mathbb{L}at_r^n(k((t))).$
\end{corollary}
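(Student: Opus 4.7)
The plan is to use the canonical bundle, together with the ample generator of the Picard group, as a numerical invariant that any isomorphism of schemes must preserve. By Proposition~\ref{P:cl}, $\operatorname{Pic}(\Lrn)\cong\mathbb{Z}$ with ample generator $\mathcal{L}(\lambda_1)$; by the result of \cite{pic} cited just before the corollary, $\operatorname{Pic}(\mathbb{L}at_r^n(k((t))))\cong\mathbb{Z}$ with ample generator $\mathcal{L}(\tilde\lambda_1)$. Since both varieties are normal, projective, and locally a complete intersection, each carries a dualizing line bundle expressible as an integer multiple of its ample generator. Any scheme isomorphism would identify the two ample generators (the sign being pinned down by ampleness, since the ample cone is a half-line) and the two canonical bundles, so the two integer coefficients must agree.

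First I would rerun, on $\mathbb{L}at_r^n(k((t)))$, the exact filtration argument used to prove Proposition~\ref{cang}. The stabilizer of the distinguished lattice in $\SL(n,k[[t]]/t^{nr}k[[t]])$ has the same layered shape $P(0)\supseteq P(1)\supseteq\cdots\supseteq P(nr)=P_r^{\mathrm{cl}}$, with each subquotient $P(j)/P(j+1)$ the $(n-1)$-dimensional additive group of rows with entries in $t^jk[[t]]/t^{j+1}k[[t]]\cong k$. The decisive difference with the mixed characteristic computation is that in equal characteristic the leading coefficient of a $t$-adic expansion transforms $k$-linearly under scalar multiplication by $k^*$; no $p$-th-power Frobenius is introduced. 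Hence every subquotient carries the same, \emph{untwisted} representation $\mathfrak{u}$ of $P_r^{\mathrm{cl}}$, rather than its $p^j$-th Frobenius twist. Taking the top exterior power of the dual of the associated graded contributes a weight of $n\tilde\lambda_1$ from each of the $nr$ steps, so I expect the canonical bundle on $\mathbb{L}at_r^n(k((t)))$ to come out to $\mathcal{L}(-n^2r\,\tilde\lambda_1)$.

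Comparing with Proposition~\ref{cang}, a scheme isomorphism would then force the identity
\[
n\,\frac{p^{nr}-1}{p-1} \;=\; n^2 r, \qquad \text{i.e.,} \qquad 1+p+p^2+\cdots+p^{nr-1}=nr.
\]
For $n\geq 2$ and $r\geq 1$ the left-hand side has $nr\geq 2$ terms each at least $1$, with leading term $p^{nr-1}\geq 2$, so it strictly exceeds $nr$; the case $n=1$ is vacuous since $\mathbb{L}at_r^1$ reduces to a point. The equality fails, giving the desired contradiction.

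The main obstacle I anticipate is not the numerical inequality, which is immediate, but confirming that the ample generator $\tilde\lambda_1$ on the function-field Schubert cell is normalized compatibly with $\lambda_1$ on $\Lrn$, so that the two integer coefficients are genuinely comparable under a hypothetical isomorphism. Once the normalization extracted from \cite{pic} is matched against the normalization used in Proposition~\ref{P:cl}, the argument reduces to a routine transcription of the filtration calculation of Proposition~\ref{cang} into equal characteristic, the only substantive change being the disappearance of the Frobenius twists.
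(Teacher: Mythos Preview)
Your approach is the same as the paper's: compare the canonical bundle, written as an integer multiple of the ample generator of the Picard group, on the two schemes, and observe that an isomorphism would force the two integers to agree. The only discrepancy is numerical: the paper asserts that the equal-characteristic computation yields the $-n$'th power of the ample generator, whereas your filtration argument gives $-n^2r$. Your value is in fact what the paper's own method produces once the Frobenius twists are removed (summing $nr$ identical copies of $-n\tilde\lambda_1$ rather than $-n p^j\tilde\lambda_1$), so the paper's ``$-n$'' appears to be a slip. Either number yields the contradiction, since $1+p+\cdots+p^{nr-1}$ equals neither $1$ nor $nr$ for $nr\ge 2$, so your proof stands as written. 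Your worry about normalizing the generators is already handled: ampleness picks out a unique generator of $\mathbb{Z}$, and any scheme isomorphism preserves ampleness.
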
           
 \begin{proof} On $\mathbb{L}at_r^n(k((t)))$ the computations of Proposition \ref{cang} would imply that the canonical bundle was the $-n$'th power of the ample generator of the Picard group in contradistinction to \ref{cang}. Since both the ample generator of $\operatorname{Pic}$ as well as the canonical bundle are uniquely determined functorial objects which would be carried one to another by any isomorphism, an isomorphism is impossible.
\end{proof}

\appendix

\section{Deformations of lattices in $\Lrn$} 

 In \cite{me} there is a computation of the tangent space to a lattice in $\Lrn$. The statement there is not particularly well adopted to the computations in this paper. For this reason we have expanded the proofs and computations of \cite{me} and reformulated them for application in this paper. We conclude with an explicit construction of a class of deformations sufficient to show that $\Lrn$ is a variety. 
\subsection{Infinitesimal deformations of schemes of lattices over\\ schemes of rings}We wish to classify certain infinitesimal deformations of polynomial algebras. Let $M$ be affine $N$-space and let $S$ be a closed subscheme isomorphic to affine $R$-space. Let $k[M]$ and $k[S]$ be their respective coordinate rings. Let $J$ be the ideal defining $k[S]$. We may assume it to be generated by a system of parameters $y_1,\dots ,y_{N-R}$. We wish to give a complete description of the set of flat infinitesimal deformations of $S$ in $M$. Write $\widetilde{M}$ for the base extension of $M$ by $k[\epsilon]$, the dual numbers, and write $k_{\epsilon}[\widetilde{M}]$ for its coordinate ring. A flat deformation of $S$ will be a closed $\ke$-subscheme of $\widetilde{M}, \; \widetilde{S}$, with coordinate ring $\ke[\widetilde{S}]$ which is flat over $\ke$ and which maps surjectively onto $k[S]$. Let $\widetilde{J}$ be the kernel of the map $\ke[\widetilde{F}]\mapsto \ke[\widetilde{S}]$. If $z_1, \dots ,z_R$ is a set of indeterminates such that $k[S]=k[z_1,\dots ,z_R]$ then lifting these indeterminates to $\tilde{z}_1, \dots ,\tilde{z}_R$, flatness implies that the monomials in the $\tilde{z}_j$ are a $\ke$ basis of $\ke[\widetilde{S}]$. Hence there is a section $s:k[S]\rightarrow \ke[\widetilde{S}]$ and a natural isomorphism of $\ke$-algebras $\ke[\widetilde{S}]\simeq \ke\otimes_kk[S]$. The trivial deformation $\tilde{S}_{\epsilon}$ which is the quotient of $\ke[\widetilde{M}]$ by the ideal $\widetilde{J}_0=J\ke[\widetilde{M}]$. In consequence of these observations, there is a commutative diagram:

\begin{equation}\label{defd}
\begin{CD}
{} @. 0 @. 0 @. 0 @. {}\\
@. @VVV  @VVV  @VVV @.\\
0@>>>\epsilon J @>>> \epsilon k[M] @>>> \epsilon k[S] @>>>0\\
@. @VVV @VVV @VVV @.\\
0@>>>\widetilde{J} @>>>  \ke[\widetilde{M}] @>>> \ke[\widetilde{S}]@>>> 0\\
@.  @V\pi_0 VV  @V\pi VV  @VV\pi_1V  @.\\
0@>>> J @>>> k[M] @>>> k[S] @>>> 0\\
@. @VVV @VVV @VVV @.\\
{}@. 0 @. 0 @. 0 @. {}
\end{CD}
\end{equation}

\begin{proposition}\label{def} There is a bijective correspondence between infinitesimal deformations of $\widetilde{J}_0$ and sections in the normal sheaf $\mathcal{N}_{S/N}=\mathcal{H}\it{om}_{\V_M}(J/J^2, k[S])$. The correspondence sends the section $\delta \in \mathcal{N}_{S/N}$ to the ideal $\widetilde{J}_{\delta}=\{a+b\epsilon \in \ke[\widetilde{M}]:a\in J \quad b\equiv \delta(a)\operatorname{mod} (J)\}$. The conormal vector $\delta$ is called the classifying map of $\widetilde{J}_{\delta}$. 
\end{proposition}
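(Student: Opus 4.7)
The plan is to establish the bijection by explicit constructions in both directions, relying on the canonical splitting $\ke[\widetilde{M}] = k[M] \oplus k[M]\epsilon$ and on the standard flatness criterion that $\ke[\widetilde{M}]/\widetilde{J}$ is flat over $\ke$ if and only if $\widetilde{J} \cap \epsilon\ke[\widetilde{M}] = \epsilon J$. First I would verify that the assignment $\delta \mapsto \widetilde{J}_\delta$ lands in flat deformations; then I would construct an inverse by extracting a classifying conormal map from an arbitrary flat deformation; finally I would check that the two constructions are mutually inverse, which is essentially tautological from the defining formula.

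For the forward direction, additive closure of $\widetilde{J}_\delta$ is immediate from the additivity of $\delta$. Closure under multiplication by $\ke[\widetilde{M}]$ is the first substantive point: expanding $(c+d\epsilon)(a+b\epsilon) = ca + (cb+da)\epsilon$ with $a\in J$ and $b\equiv \delta(a) \bmod J$, one has $da\in J$ and hence $cb+da \equiv cb \equiv c\,\delta(a) = \delta(ca) \bmod J$, where the last equality is precisely the $k[M]$-linearity of $\delta$. Flatness of the resulting quotient then follows because $b\epsilon \in \widetilde{J}_\delta$ forces $b \equiv \delta(0) = 0 \bmod J$, so $\widetilde{J}_\delta \cap \epsilon\ke[\widetilde{M}] = \epsilon J$; the snake lemma applied to diagram \eqref{defd} produces the required flat lift of $k[S]$.

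For the reverse direction, given a flat deformation $\widetilde{J}$, for each $a\in J$ I would choose any lift $\tilde{a}\in \widetilde{J}$ of $a$ under $\pi_0$, write it uniquely as $\tilde{a} = a + \beta(a)\epsilon$, and set $\delta(a) = \beta(a) + J \in k[S]$. The flatness hypothesis $\widetilde{J} \cap \epsilon\ke[\widetilde{M}] = \epsilon J$ guarantees that $\delta$ is independent of the chosen lift. Additivity is immediate, and the ideal structure of $\widetilde{J}$ then yields both $k[M]$-linearity (from $c\tilde{a} = ca + c\beta(a)\epsilon \in \widetilde{J}$) and vanishing on $J^2$ (from $\tilde{a}_1\tilde{a}_2 = a_1 a_2 + (a_1\beta(a_2) + a_2\beta(a_1))\epsilon \in \widetilde{J}$, whose $\epsilon$-coefficient lies in $J$). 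That $\delta \mapsto \widetilde{J}_\delta$ and its reverse are mutually inverse then follows directly from the definitions.

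The hard part will be the bookkeeping around the $\epsilon$-component, specifically showing that the $\delta$ extracted from a deformation is $k[M]$-linear rather than merely $k$-linear and that it factors through $J^2$; both properties rely on the full multiplicative closure of $\widetilde{J}$ rather than just additive closure. Once these are established, the remainder of the argument is formal verification from the defining formula for $\widetilde{J}_\delta$.
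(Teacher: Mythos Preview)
Your proposal is correct and follows essentially the same approach as the paper. The paper's proof is only a brief sketch: it extracts $\delta$ from a given $\widetilde{J}$ via a vector-space section of $\pi_0$, notes that the $\epsilon$-coefficient is well-defined modulo $J$ because $\ker\pi_0=\epsilon J$, and then explicitly leaves the remaining verifications (ideal closure, $k[M]$-linearity, vanishing on $J^2$, and the inverse construction) to the reader; you have simply filled in those details.
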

\begin{proof} This is a standard result. We include some discussion to recall for the reader some detail with which we will work below. Since $\pi_0$ is the restriction of $\pi$ to $\widetilde{J}$, for each $u\in J$ there is some element $u_1\in k[M]$ such that $u+u_1\epsilon \in \widetilde{J}$. Then $u_1$ is determined modulo $J$ because $\operatorname{ker}\pi_0 =\epsilon J$. Choose a $k$-vector space section to $\pi_0$ and write it $s(u)=u+s_1(u)\epsilon$. Let $\delta(u)$ be the residue class of $s_1(u)$ in $k[S]=k[M]/J$. Clearly the ideal $\widetilde{J}$ is the sum of the vector spaces $\epsilon J$ and $s(J)$. The reader may deduce the remainder of the proof from these observations.\end{proof}

We now recall the notation of the earlier sections of this paper. Let $\V_r=\V/p^{nr}\V$ and let $F_r=\V_r^n$. Then $\Lrn$ is isomorphic to the subgroupschemes of $F_r$ of codimension $nr$ which are $\V_r$ submodules. Then we propose to describe the tangent space to $\Lrn$ at the lattice $L$ by applying proposition \ref{def} to three pairs of algebras, $k[F_r]$ and $k[L]$, then $k[F_r]\otimes_kk[F_r]$ and $k[L]\otimes_k k[L]$ and finally to $k[F_r]\otimes_kk[\V_r]$ and $k[L]\otimes_kk[\V_r]$. 

Let $\alpha:k[F_R]\rightarrow k[F_r\otimes_k[F_r]$ be the map of coordinate rings dual to addition and let $\mu:k[F_r]\rightarrow k[F_r]\otimes_kk[\V_r]]$ be the map corresponding to scalar multiplication on $F$ by $\V_r$. We will use the same symbols to denote these operations on all subobjects quotient objects and base extensions as well. Fix an $\V_r$submodule of $F_r$ of codimension $nr$ and write it $L$. Let the ideal defining it be $I$. Fix a normal vector $\delta \in \mathcal{N}_{L/F}$ and let $\widetilde{I}_{\delta}$ be the ideal corresponding. We wish to give sufficient conditions for $\widetilde{I}_{\delta}$ to be an ideal defining an $\V_r$ submodule scheme over $\ke$. We leave it to the reader to interpret our extended notation. It will define a subgroup scheme if: 

\begin{equation}\label{sgp}
\alpha(\widetilde{I}_{\delta})\subseteq \widetilde{I}_{\delta}\otimes_{\ke}\ke[\widetilde{F}_r]+ \ke[\widetilde{F}_r]\otimes_{\ke}\widetilde{I}_{\delta}
\end{equation}. 

It will define an $\V_r$ submodule if:

\begin{equation}\label{smod}
\mu(\widetilde{I}_{\delta})\subseteq \widetilde{I}_{\delta}\otimes_{\ke}\ke[\widetilde{\V}_r]
\end{equation}

Now consider $\mathfrak{I}=I\otimes k[F]+k[F]\otimes I\subseteq k[F]\otimes k[F]$. Using, for example, the fact that $I$ is generated by a system of parameters it is easy to see that $\mathfrak{I}/\mathfrak{I}^2 \simeq (I/I^2\otimes k[L])\oplus (k[L]\otimes I/I^2)$. In the case of $I\otimes_kk[F]$ it follows from nothing more than right exactness of the tensor product that $(I\otimes_kk[\V_r])/(I\otimes_kk[\V_r])^2=(I/I^2)\otimes_kk[F]$.

 One further ingredient in the following consists of natural maps on $I/I^2$ induced by $\alpha$ and $\mu$. First consider $\mathfrak{I}=I\otimes k[F]+k[F]\otimes I\subseteq k[F]\otimes k[F]$. Using, for example, the fact that $I$ is generated by a system of parameters it is easy to see that:

\begin{align}
\mathfrak{I}&/\mathfrak{I}^2 \simeq (I/I^2\otimes k[L])\oplus (k[L]\otimes I/I^2)\\ \mathfrak{I}&= I\otimes k[F]+k[F]\otimes I \notag
\end{align}

In the case of $I\otimes k[\V]_r$ it is clear that $(I\otimes k[\V]_r)/(I^2\otimes k[\V]_r)=(I/I^2)\otimes k[\V_r]$. Then since $\alpha$ takes $I$ to $I\otimes k[F]+k[F]\otimes I$ and $\mu$ takes $I$ to $I\otimes k[\V_r]$ there are natural maps of conormal bundles:

\begin{align}
\alpha^N:&I/I^2\mapsto (I/I^2)\otimes k[L]+k[L]\otimes (I/I^2)\\
\mu^N:&I/I^2\mapsto (I/I^2)\otimes k[L]
\end{align}

\begin{proposition}\label{coa}The classifying map for $\widetilde{I}_{\delta}\otimes_{\ke}\ke[\widetilde{F}]+\ke[\widetilde{F}]\otimes_{\ke}\widetilde{I}_{\delta}$ as an infinitesimal deformation of $I\otimes_kk[F]+k[F]\otimes_k I$ is $\delta^2=(\delta\otimes \id, \id \otimes \delta)$. The classifying map for $\widetilde{I}\otimes_{\ke}\ke[\widetilde{F}]$ is $\delta \otimes \id$.
\end{proposition}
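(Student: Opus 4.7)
The plan is to apply Proposition \ref{def} directly: in each case exhibit an explicit lift of a generator from the undeformed ideal to the candidate deformed ideal, and read off the classifying map as the residue class of the $\epsilon$-coefficient. Throughout one uses the canonical decomposition $\mathfrak{I}/\mathfrak{I}^2 \simeq (I/I^2)\otimes_k k[L] \oplus k[L]\otimes_k(I/I^2)$ recalled just before the statement, together with the identification $(I\otimes_k k[F])/(I\otimes_k k[F])^2 \simeq (I/I^2)\otimes_k k[F]$ obtained by flatness of $k[F]$.

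For the first assertion, the two summands of $\mathfrak{I}/\mathfrak{I}^2$ are generated by classes of $u\otimes 1$ and $1\otimes v$ for $u,v\in I$. By the construction in Proposition \ref{def}, there exist $b,c\in k[F]$ with $b\equiv \delta(u)$ and $c\equiv \delta(v)$ modulo $I$ such that $u+b\epsilon\in \widetilde{I}_{\delta}$ and $v+c\epsilon\in \widetilde{I}_{\delta}$. Then
\begin{equation*}
(u+b\epsilon)\otimes 1 \;=\; u\otimes 1 + (b\otimes 1)\epsilon \;\in\; \widetilde{I}_{\delta}\otimes_{\ke}\ke[\widetilde{F}] \;\subseteq\; \widetilde{I}_{\delta}\otimes_{\ke}\ke[\widetilde{F}]+\ke[\widetilde{F}]\otimes_{\ke}\widetilde{I}_{\delta},
\end{equation*}
so the residue class of $b\otimes 1$ in $k[L]\otimes_k k[L]$, which equals $\delta(u)\otimes 1 = (\delta\otimes \id)(u\otimes 1)$, is the classifying-map value on the first summand. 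A symmetric computation with $1\otimes(v+c\epsilon) \in \ke[\widetilde{F}]\otimes_{\ke}\widetilde{I}_{\delta}$ yields $1\otimes \delta(v) = (\id\otimes \delta)(1\otimes v)$ on the second summand. Adding these two lifts produces a lift of an arbitrary element of $\mathfrak{I}$ and establishes $\delta^2 = (\delta\otimes\id,\id\otimes\delta)$.

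The second assertion is the same computation restricted to the first tensor factor: any generator $u\otimes a$ of $I\otimes_k k[F]$ lifts to $(u+b\epsilon)\otimes a = u\otimes a + (b\otimes a)\epsilon \in \widetilde{I}_{\delta}\otimes_{\ke}\ke[\widetilde{F}]$, so the classifying map sends $u\otimes a$ to $\delta(u)\otimes a = (\delta\otimes \id)(u\otimes a)$. The only point requiring verification is that the candidate ideals are genuine flat infinitesimal deformations of $\mathfrak{I}$ and $I\otimes_k k[F]$ respectively, so that Proposition \ref{def} actually applies; this is immediate from flatness of $\widetilde{I}_{\delta}$ over $\ke$ (built into the proof of Proposition \ref{def}) together with flatness of $\ke[\widetilde{F}]$ over $\ke$, and is the only place where a subtlety could in principle arise.
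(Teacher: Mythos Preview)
Your proof is correct and follows essentially the same approach as the paper: both lift explicit generators of $\mathfrak{I}$ into the deformed ideal and read off the classifying map from the $\epsilon$-coefficient via Proposition~\ref{def}. The only cosmetic difference is that the paper works with a general element $u\otimes a + b\otimes v$ (with $u,v\in I$ and $a,b\in k[F]$) at once, lifting it to $(u+m\epsilon)\otimes a + b\otimes(v+n\epsilon)$, whereas you treat the two summands separately via $u\otimes 1$ and $1\otimes v$; and the paper simply declares the second assertion ``clear'' while you spell out the lift $(u+b\epsilon)\otimes a$.
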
  

\begin{proof}It is clear that the classifying map for $\widetilde{I}_{\delta}\otimes_{\ke}\ke[\widetilde{F}_r]$ is none other than $\delta \otimes \id:I/I^2\otimes k[\V_r]\mapsto k[L]\otimes k[\V_r]$. Hence all that remains to be proven is the first assertion of the proposition.

Each element of $I\otimes k[F]+k[F]\otimes I=\mathfrak{I}$ is a sum of terms $u\otimes a+b\otimes v$ with $u,v\in I$.There are $u+m\epsilon$ and $v+n\epsilon$ in $\overline{I}_{\delta}$ lying above $u$ and $v$ respectively with $m\equiv \delta(u)\;\operatorname{mod}I$ and $n\equiv \delta(v)\;\operatorname{mod}I$. Consequently $(u+m\epsilon)\otimes a+b\otimes (v+n\epsilon)=u\otimes a+b\otimes v+(m\otimes a+b\otimes n)\epsilon$ is in $\widetilde{I}_{\delta}\otimes_{\ke}\ke[\widetilde{F}]+\ke[\widetilde{F}]\otimes_{\ke}\widetilde{I}_{\delta}$. Hence by Proposition \ref{def}  the class of $m\otimes a+b\otimes n\;\operatorname{mod}\mathfrak{I}^2$ is the value of the classifying map for $\widetilde{I}_{\delta}\otimes_{\ke}\ke[\widetilde{F}]+\ke[\widetilde{F}]\otimes_{\ke}\widetilde{I}_{\delta}$ on $u\otimes a+b\otimes v$. Applying equation (A.4)this is the value of $(\delta \otimes \id, \id \otimes \delta)$ on the class of $u\otimes a+b\otimes v$ in $(I/I^2)\otimes k[L]\oplus k[L]\otimes (I/I^2)$.
\end{proof}

\begin{proposition}\label{cd}Let $I$ define the lattice $L$ in $F$ and suppose that $\widetilde{I}_{\delta}$ is an infinitesimal deformation of $I$ with classifying map $\delta$. Then $\widetilde{I}_{\delta}$ defines a $\ke$ lattice in $\widetilde{F}$ if and only if the following diagrams commute:

\begin{equation}\label{ad}  
\begin{CD}
I/I^2@>\delta>>k[L] \\
@VV\alpha^NV     @VV\alpha V \\
(I/I^2)\otimes k[L]\oplus k[L]\otimes (I/I^2)@>(\delta \otimes \id,\id \otimes \delta)>>k[L]\otimes k[L]
\end{CD}
\end{equation}

\begin{equation}\label{mul}
\begin{CD}
I/I^2@>\delta>>k[L] \\
@VV\mu^NV      @VV\mu V \\
(I/I^2)\otimes k[\V_r]@>\delta \otimes \id>>k[L]\otimes k[L]
\end{CD}
\end{equation}

Written as equations these two diagrams assert that:

\begin{align}
\alpha \circ \delta &=(\delta \otimes \id, \id \otimes \delta)\circ \alpha^N \\
\mu \circ \delta &= (\delta \otimes \id)\circ \mu^N
\end{align}

\end{proposition}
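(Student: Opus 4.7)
The plan is to reduce both compatibility conditions \eqref{sgp} and \eqref{smod} to the assertion that two infinitesimal deformations of the same ideal have the same classifying map, and then invoke the bijection of Proposition \ref{def} together with the explicit classifying maps computed in Proposition \ref{coa}. Since the classifying map of a deformation determined by Proposition \ref{def} is unique, an inclusion of one deformation into another is controlled entirely by the two classifying maps agreeing on the conormal bundle.

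First I would treat the subgroup condition \eqref{sgp}. By Proposition \ref{coa}, the ideal $\widetilde{I}_\delta\otimes_{\ke}\ke[\widetilde{F}_r]+\ke[\widetilde{F}_r]\otimes_{\ke}\widetilde{I}_\delta$ is the infinitesimal deformation of $\mathfrak{I}=I\otimes k[F_r]+k[F_r]\otimes I$ with classifying map $(\delta\otimes\id,\,\id\otimes\delta)$ on $\mathfrak{I}/\mathfrak{I}^2$. On the other hand, $\alpha$ is a $k$-algebra homomorphism that restricts to a map $I\rightarrow\mathfrak{I}$, so $\alpha\otimes_k\ke$ sends $\widetilde{I}_\delta$ into $\ke[\widetilde{F}_r]\otimes_{\ke}\ke[\widetilde{F}_r]$ and, modulo $\mathfrak{I}$, computes the classifying map of the image as the composite $\alpha\circ\delta:I/I^2\rightarrow k[L]\otimes k[L]$. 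Testing on a generator $u+m\epsilon\in\widetilde{I}_\delta$ (with $u\in I$ and $m\equiv\delta(u)\bmod I$) gives $\alpha(u+m\epsilon)=\alpha(u)+\alpha(m)\epsilon$, and membership in the target ideal is equivalent to $\alpha(m)\equiv(\delta\otimes\id,\,\id\otimes\delta)\bigl(\alpha^N(u+I^2)\bigr)\pmod{\mathfrak{I}}$; since $m\equiv\delta(u)\bmod I$ and $\alpha$ is a ring map, $\alpha(m)\bmod\mathfrak{I}$ is exactly $\alpha(\delta(u))$. This is precisely the commutativity of diagram \eqref{ad}. Because generators of the form $u+m\epsilon$ span $\widetilde{I}_\delta$ over $\ke$ (via the section from Proposition \ref{def}) and $\alpha$ is $\ke$-linear, this check on generators suffices.

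The second compatibility \eqref{smod} is handled in exact parallel. By Proposition \ref{coa} the classifying map of $\widetilde{I}_\delta\otimes_{\ke}\ke[\widetilde{\V}_r]$ is $\delta\otimes\id$ on $(I/I^2)\otimes k[\V_r]$. Applying $\mu$ to a generator $u+m\epsilon$ and reducing modulo $I\otimes k[\V_r]$ yields the condition $\mu(\delta(u))=(\delta\otimes\id)\bigl(\mu^N(u+I^2)\bigr)$, which is the commutativity of diagram \eqref{mul}. Both directions (necessity and sufficiency) fall out of the uniqueness part of Proposition \ref{def}: the inclusion holds iff the classifying map of the image factors through the classifying map of the ambient deformation.

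The main obstacle I anticipate is a bookkeeping one rather than a conceptual one: one must be careful that the classifying map description in Proposition \ref{coa} is being applied to the correct deformation (the relevant ambient ideal $\mathfrak{I}$ for the coaddition, versus $I\otimes k[\V_r]$ for the coaction by scalars) and that the reduction of $\alpha(m)$ and $\mu(m)$ modulo the respective ideals really does depend only on $m\bmod I$. This last point uses that $\alpha$ and $\mu$ are $k$-algebra homomorphisms sending $I$ into $\mathfrak{I}$ and $I\otimes k[\V_r]$ respectively, so any correction of $m$ by an element of $I$ is absorbed into the ambient ideal and does not affect the residue class.
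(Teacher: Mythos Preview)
Your proposal is correct and follows essentially the same route as the paper's own proof: apply Proposition~\ref{coa} to identify the classifying maps of the target ideals, test membership on generators $u+m\epsilon$ of $\widetilde{I}_\delta$, and reduce the inclusion conditions \eqref{sgp} and \eqref{smod} to the commutativity of the two diagrams. Your treatment is in fact slightly more explicit than the paper's in two places: you note that checking generators suffices because $\alpha$ and $\mu$ are $\ke$-algebra maps and the targets are ideals, and you spell out why $\alpha(m)\bmod\mathfrak{I}$ depends only on $m\bmod I$ (namely $\alpha(I)\subseteq\mathfrak{I}$), a point the paper passes over.
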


\begin{proof} These diagrams are essentially diagrammatic representations of the assertions in Proposition \ref{coa}. Consider diagram \ref{ad}. By \ref{sgp} the classifying map $\delta$ will define a $\ke$-subgroup of $\widetilde{F}$ if $\alpha$ carries $\widetilde{I}_{\delta}$ into $\widetilde{I}_{\delta}\otimes_{\ke}\ke[\widetilde{F}_r]+ \ke[\widetilde{F}_r]\otimes_{\ke}\widetilde{I}_{\delta}$. Suppose $u+m\epsilon \in \widetilde{I}_{\delta}$. Then $u\in I$ and $m\equiv \delta(u)\operatorname{mod}I$. Now $\alpha(u+m\epsilon)=\alpha(u)+\alpha(m)\epsilon$. By Proposition \ref{coa} the classifying map for $\widetilde{I}_{\delta}\otimes_{\ke}\ke[\widetilde{F}_r]+ \ke[\widetilde{F}_r]\otimes_{\ke}\widetilde{I}_{\delta}$ is $(\delta \otimes \id,\id \otimes \delta)$. Hence $\alpha(u+m\epsilon)$ is in $\widetilde{I}_{\delta}$ if $\alpha(m)\equiv (\delta \otimes \id,\id \otimes \delta)(\alpha(u)) \operatorname{mod} \mathfrak{I}^2$. But $\alpha(u)$ reduces to $\alpha^N(\overline{u})$ and the image of $\alpha(u)$ under $(\delta \otimes \id,\id\otimes \delta)$ depends only on its class modulo $\mathfrak{I}^2$. The argument for diagram \ref{mul} is similar but the simpler nature of the maps involved makes it rather obvious.
\end{proof}  

Since $L$ is a normal algebraic subgroup of $F$ it can be represented as the fiber over $(0)$ in the projection $F\mapsto F/L$. Since $F/L$ is isomorphic to affine $nr$ space there is a system of parameters defining $(0)$ in $F/L$ which can be viewed as a system of parameters defining $L$ in $F$. That is there are $nr$ functions defining $0$ in $F/L$ which can be viewed as a system of parameters defining $L$ in $F$. They are invariant under both left and right translation by elements of $L$ and their representatives in $I/I^2$ can be viewed as invariant sections. (The $L$ bundle $I/I^2$ is $L$ homogeneous on $L$.) We will call such a system of parameters an {\it invariant system} of parameters defining $L$.

We wish to identify certain special invariant systems of parameters. To this end we apply the structure theory of finite modules over principal ideal domains. View the lattices as $\V$-free modules. Then if $L\subseteq F$ is a lattice of corank $nr$ then there is an ordered $F$-basis $f_1,\dots ,f_n$ and integers $\nu_1 \geq \nu_2 \geq \dots \geq \nu_n$ such that $\sum_{i=1}^n\nu_i=nr$ so that $L$ is the lattice with basis $p^{\nu_1}f_1, p^{\nu_2}f_2, \dots ,p^{\nu_n}f_n$. A typical element of $F$ can be written $\sum_{i=1}^n \sum_{j=0}^{\infty}\xi(x_{i,j})^{p^{-j}}p^jf_j$. It is clear that in this description of $F$ and $L$ a set of equations defining $L$ is:

\begin{equation}\label{psp}
x_{ij}=0\quad 0\leq j< \nu_i
\end{equation}

A system of parameters defining $L$ which is of this type will be referred to as a {\it principal invariant system} of parameters. For simplicity we will refer to the integers $\nu_1, \nu_2, \dots , \nu_n$ as the principal divisors of the lattice $L$. We wish to understand the maps $\alpha^N$ and $\mu^N$.

First we will consider $\alpha^N$. Since $\alpha$ is a subgroup under addition, $\alpha(I)\subseteq I\otimes k[F]+k[F]\otimes I$ and so composing $\alpha$ with the projection $\operatorname{id}\otimes \pi$ where $\pi:k[F]\mapsto k[L]$ is the restriction, we see that there is a natural coaction $\alpha^r:I\mapsto I\otimes k[L]$ corresponding to right translation. Taking commutativity into account, there is a symmetrically constructed map $\alpha^{\ell}:I\mapsto k[L]\otimes I$ corresponding to left translation by the negative of an element. (Recall that left translation usually involves the inverse.) That is negative left translation is the map $(\alpha^{\ell}_x(f))(m)=f(x+m)$ and since the group is commutative it is the same up to order as right translation.

We recall some basic facts concerning Witt polynomials. They are defined by the equations:
 
\begin{align}
(\sum_{i=0}^{\infty}&\xi(x_i)^{p^{-i}}p^i)+(\sum_{i=0}^{\infty}\xi(y_i)^{p^{-i}}p^i)&=\sum_{i=0}^{\infty}\xi(\Phi_i(x_0,\dots ,x_i;y_0,\dots y_i))^{p^{-i}}p^i \label{wap}\\
(\sum_{i=0}^{\infty}&\xi(x_i)^{p^{-i}}p^i)\cdot(\sum_{i=0}^{\infty}\xi(y_i)^{p^{-i}}p^i)&=\sum_{i=0}^{\infty}\xi(\Psi_i(x_0,\dots ,x_i;y_0,\dots y_i))^{p^{-i}}p^i \label{wmp}
\end{align}

In these equations it is important to keep track of weight. The indeterminates $x_i$ and $y_i$ are assigned weight $p^i$. Then $\Phi_i$ is of total weight $p^i$ That is each monomial in it in the $x_j$ and $y_j$ is of total weight $p^i$ in the two sets of indeterminates together. They are called the additive polynomials.

 The $\Psi_i$, known as the multiplicative polynomials are of total weight $p^i$ in the $x_j$ and the $y_j$ separately. That is every monomial is of weight $p^i$ in the $x_j$ and then separately in the $y_j$.

The group of additive homomorphisms of the lattice $L$ into $\mathbb{G}_{a,k}$ is an $\V$-module under the contragredient action $(a\cdot f)(x)=f(ax)$. 

\begin{definition}\label{ach} An additive character of the lattice $L$ is an algebraic additive homomorphism of $L$ into $\mathbb{G}_{a,k}=k^+$ which is $\V$-linear for the action on $k$ induced by the natural homomorphism $\V\mapsto k$. The set of additive characters on $L$ is written $\mathcal{A}^+(L)$. It is a group but it is also an $\V$-module under the contragredient action. \end{definition}

Notice that $\mathcal{A}^+(L)$ the group of additive characters is  a vector space over $k$ under the standard multiplication $(af)(x)=a(f(x))$. Since $p\V$ acts as zero on $\mathcal{A}^+(L)$ the contragredient $\V$ action endows $\mathcal{A}^+(L)$ with another vector space structure. Hence it is actually a $k,k$ bimodule. Its decomposition into a direct sum $L=\coprod_{i=1}^n\V/p^{\alpha_i}\V$ is actually a bimodule decomposition making it into a vector space of dimension $nr$ in two different ways.

The conormal bundle $I/I^2$ also has a bimodule structure. The left $k$ structure is simply its $k$-vector space structure as a quotient of $k$ vector subspaces of the $k$-algebra $k[F]$. For the second note that $\V$ operates on $k[F]$ by the action induced by its action on the $\V$-module $F$. In functional notation the action can be written $(a\cdot f)(x)=f(ax)$. This makes $k[F]$ an $\V$-module and since $L$ is an $\V$-submodule, both $I$ and $I^2$ are $\V$-submodules. Since $p$ annihilates $k[F]$ this makes $k[F],\;I$ and $I^2$ all into $\V/p\V=k$-vector spaces but the structure is not the standard one. To understand it note that the $\V$-lattice $F$ is a scheme of modules over the scheme of rings $\V$. If $\mu:k[F]\mapsto k[F]\otimes k[\V]$ is the map of coordinate rings corresponding to scalar multiplication $\V \times F\mapsto F$, the corresponding structure can be described as follows. Let $\mu(f)=\sum_{i=1}^r f_i\otimes f'_i$ then the action can be written $(a\cdot f)(am)=\sum_{i=1}^rf'_i(a)f_i(m)$.

\begin{proposition}\label{wtf} Let $L$ be a lattice in $F$ of codimension $R$ with elementary divisors $\alpha_1, \dots ,\alpha_n$ (nonincreasing order). Let $\{ x_{i,j}: 0\leq j\leq \alpha_i\}$ be a principal invariant system of parameters for $L$ in $F$. Let $\overline{x}_{i,j}$ denote the residue class of $x_{i,j}$ in $I/I^2$ where $I$ is the ideal defining $L$. Then:
\begin{enumerate}
  \item $\alpha^N(\overline{x}_{i,j})=\overline{x}_{i,j}\otimes 1+1\otimes \overline{x}_{i,j}$.
  \item $\mu^N(\overline{x}_{i,j})=\overline{x}_{i,j}\otimes f_{i,j}$ for some $f_{i,j}\in k[\V_r]$.
  \end{enumerate} 
\end{proposition}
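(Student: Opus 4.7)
The plan is to exploit the explicit form of the Witt addition and multiplication polynomials. Recall that if $\Phi_j(x_0,\dots,x_j;y_0,\dots,y_j)$ is the $j$-th Witt addition polynomial, then by the defining ghost-component identity one has
\[
\Phi_j(x_0,\dots,x_j;y_0,\dots,y_j)=x_j+y_j+R_j(x_0,\dots,x_{j-1};y_0,\dots,y_{j-1}),
\]
where $R_j$ is a polynomial in the strictly earlier coordinates in which every monomial has positive degree in the $x$'s \emph{and} positive degree in the $y$'s (this is checked from the recursion $R_1=-\tfrac1p\sum_{k=1}^{p-1}\binom{p}{k}x_0^ky_0^{p-k}$ and carries through inductively). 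Similarly, the $j$-th Witt multiplication polynomial $\Psi_j(x_0,\dots,x_j;a_0,\dots,a_j)$ is homogeneous of weight $p^j$ separately in the $x$'s (with $x_\ell$ weighted $p^\ell$) and in the $a$'s, and has no terms of degree zero in either variable set. Because the parameters $x_{i,\ell}$ with $0\le\ell<\alpha_i$ are exactly the Witt coordinates of the $i$-th coordinate function and they all lie in $I$, both coactions become computable modulo the appropriate squares by using these explicit formulas.

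For (1), the coaddition applied to $x_{i,j}$ is
\[
\alpha(x_{i,j})=\Phi_j\bigl(x_{i,0}\otimes 1,\dots,x_{i,j}\otimes 1;\;1\otimes x_{i,0},\dots,1\otimes x_{i,j}\bigr)=x_{i,j}\otimes 1+1\otimes x_{i,j}+R_j.
\]
Every monomial occurring in the $R_j$-summand contains a factor $x_{i,\ell_1}\otimes 1$ with $\ell_1<j<\alpha_i$ and a factor $1\otimes x_{i,\ell_2}$ with $\ell_2<j<\alpha_i$, so it lies in $(I\otimes k[F])(k[F]\otimes I)=I\otimes I$, which is contained in the square of the ideal $\mathfrak{I}=I\otimes k[F]+k[F]\otimes I$. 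Passing to the quotient $\mathfrak{I}/\mathfrak{I}^2\simeq (I/I^2)\otimes k[L]\oplus k[L]\otimes(I/I^2)$ as in the paragraphs preceding the proposition, the $R_j$ contribution dies and we get precisely $\alpha^N(\overline{x}_{i,j})=\overline{x}_{i,j}\otimes 1+1\otimes\overline{x}_{i,j}$.

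For (2), the multiplication coaction applied to $x_{i,j}$ is $\mu(x_{i,j})=\Psi_j(x_{i,0},\dots,x_{i,j};a_0,\dots,a_j)$, where $a_0,\dots,a_j$ are the Witt coordinate functions on the ring scheme $\V_r$. Split $\Psi_j$ into its part of degree $\ge 2$ in the $x$-variables and its part of degree $1$ (the degree-$0$ part is absent). Any monomial of $x$-degree $\ge 2$ lies in $I^2\otimes k[\V_r]$ because all its $x_{i,\ell}$ factors are parameters of $L$. A monomial of $x$-degree $1$ is of the shape $x_{i,\ell}\cdot h_\ell(a_0,\dots,a_j)$ for some $\ell\le j$, and the weight constraint on $\Psi_j$ forces $p^\ell=p^j$, hence $\ell=j$. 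Consequently, modulo $I^2\otimes k[\V_r]$,
\[
\mu(x_{i,j})\equiv x_{i,j}\otimes h_j(a_0,\dots,a_j),
\]
so setting $f_{i,j}$ equal to the class of $h_j(a_0,\dots,a_j)$ in $k[\V_r]$ yields $\mu^N(\overline{x}_{i,j})=\overline{x}_{i,j}\otimes f_{i,j}$, as required. The main (only) subtlety is keeping track of which terms are absorbed into the relevant squared ideals; once the structural facts about $\Phi_j$ and $\Psi_j$ are recorded, both statements follow by direct inspection.
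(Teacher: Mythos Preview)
Your argument is correct and follows essentially the same route as the paper: both proofs compute $\alpha$ and $\mu$ on $x_{i,j}$ via the Witt addition and multiplication polynomials $\Phi_j,\Psi_j$ and then use their weight/homogeneity properties to identify which terms survive modulo the relevant squared ideal. The only cosmetic difference is in (1): you invoke the structural fact that every monomial of the remainder $R_j$ has positive degree in each variable set, so lands in $I\otimes I\subseteq\mathfrak{I}^2$, whereas the paper instead reduces to bilinear cross terms $x_{i,r}\otimes x_{i,s}$ and kills them with the observation $p^r+p^s\neq p^j$ for $r,s<j$; both reach the same conclusion.
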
 

\begin{proof} The first two assertions follow from weight considerations. First note that $\alpha^N(x_{i,j})=\Phi_j(x_{i,0}\otimes 1,x_{i,1}\otimes 1\dots ,x_{i,j}\otimes 1;1\otimes x_{i,0}, 1\otimes x_{i,1},\dots ,1\otimes x_{i,j})$. Now $\Phi_j$ is a sum of monomials $M\otimes N$ where the total weight of the tensor product is $p^j$. Since $x_{i,j}$ is itself of weight $p^j$ the only terms involving $x_{i,j}$ are $x_{i,j}\otimes 1$ and $1\otimes x_{i,j}$. Every other term must be of weight $p^j$. Since each term is a tensor product $M\otimes N$ with $M$ and $N$ monomials only those monomials consisting of a single variable on each side of the tensor product is nonzero modulo $I^2\otimes I+I\otimes I^2$. Thus only terms $x_{i,r}\otimes x_{i,s}$ with both $r$ and $s$ strictly less than $j$ are possible. Such terms, of weight $p^r+p^s$, are never of weight $p^j$. Hence $\alpha^N(x_{i,j})=c_1x_{i,j}\otimes 1+c_2\otimes x_{i,j}$. Commutativity implies that $c_1=c_2$ and the fact that translation by $0$ is the identity implies that $c_1=1$.

Now consider $\mu^N$. Write a typical element of $\V$ as $\sum_{i=0}^{\infty}\xi(y_i)^{p^{-i}}p^i$ so that $k[\V]=k[y_0,y_1,\dots ]$. Then $\mu(x_{i,j})=\Psi_j(x_{i,0},\dots ,x_{i,j};y_0,\dots ,y_j)$. Write $\Psi_j(x_{i,0},\dots ,x_{i,j};y_0,\dots ,y_j)=\sum_{q=1}^R M_q\otimes h_q$ where the $M_i$ are monomials in the $x_{i,r}$ and the $h_q$ are polynomials in the $y_r$. Both the $M_q$ and the $h_q$ must be homogeneous of weight $p^j$. The only monomial in the $x_{i,r}$ homogeneous of weight $p^j$ and not in $I^2$ is $x_{i,j}$. Hence $\mu^N(\overline{x}_{i,j})=\overline{x}_{i,j}\otimes f_{i,j}$ where $f_{i,j}$ is the polynomial in the $y_i\;0\leq i\leq j$ of weight $p^j$ which occurs as the right coefficient of $x_{i,j}$.
 \end{proof}

Consider the symmetric algebra on $\mathfrak{n}_{L/F}$. We write it $k[\mathfrak{n}_{L/F}]$. As we noted the spectrum of this algebra is identified with the linear dual of $\mathfrak{n}_{L/F}$, that is the restricted Lie algebra $\mathfrak{F}/\mathfrak{L}$. This is an additive group with addition (the standard one) defined by the map $\alpha^N$. That is $\alpha^N$ can be thought of as inducing an algebra morphism from $k[\mathfrak{n}_{L/F}]$ to $k[\mathfrak{n}_{L/F}]\otimes k[\mathfrak{n}_{L/F}]$ and this map is the map of algebras describing the standard addition on $\mathfrak{F}/\mathfrak{L}$. Similarly the map $\mu^N:\mathfrak{n}_{L/F}\rightarrow \mathfrak{n}_{L/F}\otimes k[M]$ induces a homomorphism of algebras $\mu^N:k[\mathfrak{n}_{L/F}]\rightarrow k[\mathfrak{n}_{L/F}]\otimes k[\V]$. This can be thought of as the map of coordinate rings corresponding to an  action $\V\times_k(\mathfrak{F}/\mathfrak{L})\rightarrow \mathfrak{F}/\mathfrak{L}$.

\begin{theorem}Let $L\subseteq F$ be a codimension $nr$ lattice in $F$ with elementary divisors $\alpha_1\geq \alpha_2\geq \dots \geq \alpha_n$. Let $I$ be the ideal in $k[F]$ defining $F$. Let $\{ x_{i,j}: 0\leq j< \alpha_i\}$ be a principal invariant system of parameters for $L$. Let $\widetilde{I}_{\delta}$ be an infinitesimal deformation of the ideal $I$ defining a scheme of lattices of codimension $nr$ in $\ke[\widetilde{F}]$.
\begin{enumerate}
  \item The elements $\overline{x}_{i,j}$, the residue classes of the elements $x_{i,j}$ modulo $I^2$, constitute a basis for the space of invariant sections in the conormal bundle $I/I^2$.
 
  \item Let $\mathfrak{n}_{L/F}=(I/I^2)^L$ denote the space of translation invariant sections in the conormal bundle $I/I^2$ on $L$. Then the classifying map $\delta$ is uniquely determined by its restriction to the $nr$ dimensional $k$-vector space $\mathfrak{n}_{L/F}$.
  \item For each $\overline{x}_{i,j},\; \delta(\overline{x}_{i,j})\in k[L]$ is an $\V_r$ homomorphism from $L$ to the $\V_r$-module of additive characters $\mathcal{A}^+(L)$. Converseley every $\V$-module map  determines an infinitesimal module scheme deformation of $L$ in $F$. 
\end{enumerate}
\end{theorem}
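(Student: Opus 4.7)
The strategy is to reduce every claim to a computation on the $nr$-dimensional vector space $\mathfrak{n}_{L/F}$ of translation-invariant sections of $I/I^2$. The central fact I would invoke is that $I/I^2$, viewed as a sheaf on the smooth commutative group scheme $L$, is homogeneous, and therefore canonically $I/I^2\simeq \mathfrak{n}_{L/F}\otimes_k k[L]$ as $k[L]$-modules; moreover $\mathfrak{n}_{L/F}\cong(\F/\mathfrak{L})^{*}$ has dimension $\mathrm{codim}(L,F_r)=nr$.

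For item (1), Proposition~\ref{wtf}(1) gives $\alpha^N(\overline{x}_{i,j})=\overline{x}_{i,j}\otimes 1+1\otimes \overline{x}_{i,j}$, which via the splitting $\alpha^N=\alpha_R\oplus\alpha_L$ of equation~\ref{nma} expresses invariance under both right and left translation, so $\overline{x}_{i,j}\in\mathfrak{n}_{L/F}$. The count $\#\{(i,j):0\le j<\alpha_i\}=\sum_i\alpha_i=nr=\dim_k\mathfrak{n}_{L/F}$, and the $\overline{x}_{i,j}$ are linearly independent because $\{x_{i,j}\}$ is a system of parameters, so they form a basis. Item (2) is then formal: classifying maps in Proposition~\ref{def} are $k[L]$-linear, so $\delta$ is determined by its restriction to the generating subspace $\mathfrak{n}_{L/F}\otimes 1$.

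For the forward direction of item (3), I would evaluate the two diagrams of Proposition~\ref{cd} on the invariant generators $\overline{x}_{i,j}$. Using primitivity, the first diagram collapses to
\[
\alpha(\delta(\overline{x}_{i,j}))=\delta(\overline{x}_{i,j})\otimes 1+1\otimes\delta(\overline{x}_{i,j}),
\]
which says that $\delta(\overline{x}_{i,j})\in k[L]$ is primitive, i.e.\ an algebraic additive character of $L$, hence an element of $\mathcal{A}^+(L)$ (Definition~\ref{ach}). Using $\mu^N(\overline{x}_{i,j})=\overline{x}_{i,j}\otimes f_{i,j}$ from Proposition~\ref{wtf}(2), the second diagram becomes $\mu(\delta(\overline{x}_{i,j}))=\delta(\overline{x}_{i,j})\otimes f_{i,j}$; unwinding the coaction $\mu$ as the contragredient $\V_r$-action on $\mathcal{A}^+(L)$, and recalling that the $\V_r$-structure on $\mathfrak{n}_{L/F}$ dual to $\F/\mathfrak{L}$ is governed by the same weight-$p^j$ polynomials $f_{i,j}$, this is precisely the assertion that $\overline{x}_{i,j}\mapsto\delta(\overline{x}_{i,j})$ is $\V_r$-linear. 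For the converse, given any such $\V_r$-linear $\delta_0:\mathfrak{n}_{L/F}\to\mathcal{A}^+(L)$, I would extend by $k[L]$-linearity to $\delta:I/I^2\to k[L]$; the identities above hold on invariant generators by construction, and a routine check using that $\alpha$ and $\mu$ are $k$-algebra homomorphisms propagates the two diagrams of Proposition~\ref{cd} to all of $I/I^2$, so $\widetilde{I}_\delta$ defines a scheme of lattices.

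The main obstacle is bookkeeping in the forward direction of (3): one must match the $\V_r$-structure on $\mathcal{A}^+(L)$ coming from Definition~\ref{ach} with the polynomials $f_{i,j}$ produced by Proposition~\ref{wtf}(2), while simultaneously identifying the $\V_r$-structure on $\mathfrak{n}_{L/F}$ under duality with $\F/\mathfrak{L}$. Because the principal invariant system $\{x_{i,j}\}$ trivializes both the conormal bundle and the elementary-divisor decomposition $L\simeq\coprod_i\V/p^{\alpha_i}\V$, the same $f_{i,j}$ govern both structures, and so the $\mu$-diagram is genuinely equivalent to $\V_r$-linearity; pinning down this coincidence is the heart of the argument.
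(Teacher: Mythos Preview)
Your proposal is correct and follows essentially the same route as the paper: homogeneity of $I/I^2$ for (1) and (2), and evaluation of the two diagrams of Proposition~\ref{cd} on the invariant generators $\overline{x}_{i,j}$ for (3), using Proposition~\ref{wtf} to collapse them to primitivity and $\V_r$-equivariance. The one place the paper goes further than your sketch is exactly the step you flag as the main obstacle: it verifies directly that each $f_{i,j}$ is both multiplicative and additive (hence a ring homomorphism $\V_r\to k$), by computing $u_{i,j}(abx)$ and $u_{i,j}((a+b)x)$ two ways, and this is what pins down the contragredient $\V_r$-structure on $\mathcal{A}^+(L)$ and makes the $\mu$-diagram genuinely equivalent to $\V_r$-linearity.
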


\begin{proof}For (1) note that $I/I^2$ is a homogeneous bundle on $L$ and so $I/I^2\simeq \mathfrak{n}\otimes_kk[L]$ where the isomorphism is of homogeneous bundles and where the action of $L$ on the right is trivial on $\mathfrak{n}$ and by translation on $k[L]$. Under this isomorphism, $\mathfrak{n}$ is identified with $\mathfrak{n}_{L/F}$ the space of residue classes of $L$-translation invariant parameters.

Consider (2). Since $I/I^2 \simeq \mathfrak{n}_{L/F}\otimes_kk[L]$ by  extension of coefficients we note that $\Hom_{k[L]}(I/I^2, k[L])=\Hom_k(\mathfrak{n}_{L/F},k[L])$.

Now consider (3). Let $u_{i,j}=\delta(x_{i,j})$. By Proposition \ref{cd}, $\alpha(u_{i,j})=\alpha \circ \delta (x_{i,j})=(\delta \otimes \id,\id \otimes \delta)\circ \alpha^N(\overline{x}_{i,j})$ By Proposition \ref{wtf}, $\alpha^N(\overline{x}_{i,j})=\overline{x}_{i,j}\otimes 1+1\otimes \overline{x}_{i,j}$. Hence $\alpha ( \delta (x_{i,j}))=(\delta \otimes \id,\id \otimes \delta)\circ \alpha^N(\overline{x}_{i,j})= \delta(x_{i,j})\otimes 1+1\otimes \delta(x_{i,j})$. Recalling the definition of the $u_{i,j}$, this says that $\alpha(u_{i,j})=u_{i,j}\otimes 1+1\otimes u_{i,j}$. This is exactly the condition which forces $u_{i,j}$ to be an additive homomorphism from $L$ to $\mathbb{G}_{a,k}$.

We now give a geometric description of the classifying maps $\delta$ that define lattice deformations. For this we note that a principal invariant system of parameters can be thought of as a set of indeterminates generating the coordinate ring of the quotient group $F/L$. By definition they are also a system of parameters for the maximal ideal defining the identity. Write $k[F/L]=k[\{x_{i,j}:0\leq j<\alpha_i\}]$ where the $\alpha_i$ are as in Proposition A.5. Write $\mathfrak{m}$ for the ideal defining $0$ in $F/L$. Then the map of coordinate rings $k[F/L]\hookrightarrow k[F]$ corresponding to the projection $F\rightarrow F/L$ induces isomorphisms $\mathfrak{m}/\mathfrak{m}^2\simeq (\mathfrak{m}+I^2)/I^2 \simeq \mathfrak{n}_{F/L}$. Hence $\mathfrak{n}_{F/L}$ can be identified with the dual of the Lie algebra of the lattice $F/L$. Under this identification the conormal map in assertion (1) of Proposition A.5 is identified with the additive structure on $\Spec (k[\mathfrak{n}_{F/L}])$ (the symmetric algebra on $\mathfrak{n}_{F/L}$) which can be thought of as the Lie algebra of $F/L$. Since $\V$ operates as endomorphisms of $F/L$ it operates as endomorphisms of the Lie algebra of $F/L$. The conormal map in equation (2) of Proposition A.5 can be interpreted as a map extending to a coaction $k[\mathfrak{n}_{F/L}]\rightarrow k[\mathfrak{n}_{F/L}]\otimes_kk[\V]$ describing this $\V$-action on the Lie algebra of $F/L$. Write $\mathfrak{F}$ for the Lie algebra of $F$ and $\mathfrak{L}$ for that of $L$. Then the Lie algebra of $F/L$ is canonically equal to $\mathfrak{F}/\mathfrak{L}$.

 By Proposition \ref{wtf} $\mu^N(\overline{x}_{i,j})=\overline{x}_{i,j}\otimes f_{i,j}$. In particular $\mu(u_{i,j})=\mu(\delta(x_{i,j}))=\delta \otimes \id (\mu^N(\overline{x}_{i,j}))=\delta \otimes \id (\overline{x}_{i,j}\otimes f_{i,j})=u_{i,j}\otimes f_{i,j}$. That is $\mu(u_{i,j})=u_{i,j}\otimes f_{i,j}$. Now to write $\mu(g)=\sum_{i=1}^qg'_i\otimes g''_i$ means that the equation $\sum_{i=1}^q g'_i(m)g''_i(a)=g(am)$. That is $u_{i,j}(am)=f_{i,j}(a)u_{i,j}(m)$. That is $a\cdot u_{i,j}=f_{i,j}(a)u_{i,j}$ and $u_{i,j}(am)=f_{i,j}(a)u_{i,j}(m)$. Then $u_{i,j}(abx)=f_{i,j}(a)u_{i,j}(bx)=f_{i,j}(a)f_{i,j}(b)u_{i,j}(x)$. Hence $f_{i,j}$ is multiplicative. Similarly, $f_{i,j}(a+b)u_{i,j}(x)=u_{i,j}((a+b)x)=u_{i,j}(ax)+u_{i,j}(bx)=f_{i,j}(a)u_{i,j}(x)+f_{i,j}(b)u_{i,j}(x)=(f_{i,j}(a)+f_{i,j}(b))u_{i,j}(x)$. That is $f_{i,j}$ is both multiplicative and additive and so defines an embedding $k\hookrightarrow \Hom_k(\mathcal{A}^+(L),\mathcal{A}^+(L))$. Furthermore $u_{i,j}(am)=f_{i,j}(a)u_{i,j}(m)$. That is $u_{i,j}$ is additive and linear for the contragredient action.

Thus $\delta$ maps $\mathfrak{n}_{L/F}$ into the contragredient linear additive characters on $L$. It hence induces a morphism of affine schemes $\phi_{\delta}:L\mapsto \mathfrak{F}/\mathfrak{L}$. The map $\delta$ is linear for the natural vector space structure on $\mathcal{A}^+(L)$. Moreover Propositions A.2, A.3 and A.5 imply that $\mu(\delta(f))=(\delta \otimes \id) (\mu(f))$. Explicitly, if $u_{i,j}\in \mathfrak{n}_{L/F}$ is an invariant parameter then $\mu^N(u_{i,j})=u_{i,j}\otimes f_{i,j}$ This equation precisely asserts that diagram A.8 is commutative and this in turn simply says that the following diagram commutes:

\[
\begin{CD}
       \V \otimes \widetilde{L}@>\mu >> \widetilde{L}  \\
       @V\id \otimes \phi_{\delta} VV    @V\phi_{\delta} VV \\
        \V \otimes \mathfrak{F}/\mathfrak{L}@>\mu^N>>\mathfrak{F}/\mathfrak{L}\end{CD}
\]

Conversely assume $\delta$ defines an $\V$ map from $\mathfrak{n}_{L/F}$ into $\mathcal{A}^+(L)$. Since it is a $k$-linear from $I/I^2$ into $k[L]$, it defines an infinitesimal deformation of the ideal $I$. Since its image lies in $\mathcal{A}^+(L)$, Proposition A.5, assertion (1) implies that the deformation it defines is the ideal of a subscheme closed under addition. Since it is an $\V$ module map and since all the schemes under consideration are varieties of finite type over a field, it follows that equation A.10 holds. This in turn implies that $\delta$ is the classifying map of an ideal defining a $\ke$-lattice of corank $nr$ in $\widetilde{F}$. 
 \end{proof}
\begin{remark} In this proof a rarely noted phenomenon of basic significance occurs. To wit $\mu^N$ defines an exotic vector space structure on the space of additive characters. In general if $V$ is a vector space over $k$ and $v_1\dots ,v_n$ is an ordered basis, define an action of $k$ on $V$ by the equations $a\circ v_i=a^{p^\nu_i}v_i$. This makes $V$ into a vector space over $k$ with an exotic structure.\end{remark}

\begin{theorem}Write $\mathcal{D}ef(L;F)$ for the set of infintesimal deformations of the lattice $L$ of corank $nr$ in $F$. This is a subspace of $\Hom_{k[L]}(I/I^2,k[L])$. There is a bijective correspondence between $\mathcal{D}ef(L;F)$ and the space $\Hom_{\V}(L/pL, \mathfrak{F}/\mathfrak{L})$ of morphisms of $\V$-modules from $L/pL$ to $\mathfrak{F}/\mathfrak{L}$. These are morphisms in the category of schemes of modules over schemes of rings This is a vector space of dimension $(n-1)nr$ for a lattice with elementary divisors $\{nr,0,0  \dots ,0\}$ and it is of dimension $n^2r$ for all other elementary divisor types. \end{theorem}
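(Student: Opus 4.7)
The plan is to reduce the theorem to a dimension count, with the bijection itself already supplied by Theorem~A.7. That theorem identifies each classifying map $\delta:I/I^2\to k[L]$ with the $\V$-equivariant scheme morphism $\phi_\delta:L\to\mathfrak{F}/\mathfrak{L}$ induced via the symmetric algebra on $\mathfrak{n}_{L/F}$; the morphism automatically factors through $L/pL$ because $p$ annihilates the restricted Lie algebra $\mathfrak{F}/\mathfrak{L}$. Hence $\mathcal{D}ef(L;F)\cong\Hom_{\V}(L/pL,\mathfrak{F}/\mathfrak{L})$, and what remains is to compute the vector space dimension of this Hom taken in the category of schemes of $\V$-modules.

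To perform the count I would decompose both sides along the elementary divisors $\alpha_1\geq\alpha_2\geq\cdots\geq\alpha_n\geq 0$ of $F/L$, with $\sum_i\alpha_i=nr$. The isomorphism $F/L\cong\coprod_i\V/p^{\alpha_i}\V$ yields $\mathfrak{F}/\mathfrak{L}\cong\coprod_i\Or/\theta^{\alpha_i}\Or$ as a $\V$-module scheme, and the Witt-coordinate analysis in the proof of Proposition~A.5 shows that the natural basis $\theta^0,\ldots,\theta^{\alpha_i-1}$ diagonalizes the $\V$-action, with $a\in\V$ acting on $\theta^j$ as multiplication by $a_0^{p^j}$, where $a_0\in k$ is the residue of $a$ modulo $p$. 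An analogous reading of the embedding $L\hookrightarrow F$ via Witt coordinates identifies $L/pL\cong\coprod_i k_{[\alpha_i]}$, where $k_{[\gamma]}$ denotes the additive group scheme $\mathbb{G}_a$ equipped with the twisted $\V$-action $a\cdot v=a_0^{p^\gamma}v$.

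The elementary building block is then $\Hom_{\V}(k_{[\alpha]},k_{[\beta]})$: by additivity any such morphism is a polynomial $v\mapsto\sum_m c_m v^{p^m}$, and $\V$-equivariance forces the polynomial identity $c_m(a_0^{p^{\alpha+m}}-a_0^{p^\beta})=0$ in $a_0$, so $c_m\ne 0$ requires $m=\beta-\alpha$. Hence this Hom is one-dimensional when $\beta\geq\alpha$ and zero otherwise. Stacking these contributions over the block decompositions of source and target reduces $\dim\Hom_{\V}(L/pL,\mathfrak{F}/\mathfrak{L})$ to a combinatorial sum indexed by the boxes of the Young diagram of $(\alpha_1,\ldots,\alpha_n)$.

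For the smooth divisor type $(nr,0,\ldots,0)$ the sum collapses cleanly: the $k_{[nr]}$ summand of $L/pL$ has the maximal twist and contributes nothing since every target twist $j$ in $\mathfrak{F}/\mathfrak{L}$ satisfies $j<nr$, while each of the remaining $n-1$ summands $k_{[0]}$ maps one-dimensionally into each of the $nr$ summands of $\mathfrak{F}/\mathfrak{L}$, yielding $(n-1)nr$ and matching $\dim\Lrn$ as required at a smooth point. The delicate part of the argument, which I expect to require the most care, is verifying that for every other elementary divisor type the combinatorial count saturates at $n^2r$; once the twist indices on $L/pL$ are correctly read off from the embedding, this becomes a case analysis that isolates $(nr,0,\ldots,0)$ as the unique type for which the maximal twist prevents the count from reaching its generic value.
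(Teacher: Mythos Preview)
Your reduction to a dimension count is the right shape, and your block decomposition of $L/pL$ and $\mathfrak{F}/\mathfrak{L}$ with the Frobenius twists $k_{[\gamma]}$ is correct. The gap is in the count itself: your building-block formula $\dim\Hom_{\V}(k_{[\alpha]},k_{[\beta]})=1$ iff $\beta\ge\alpha$ leads to
\[
\dim\Hom_{\V}(L/pL,\mathfrak{F}/\mathfrak{L})\;=\;\sum_{i,i'}\max(0,\alpha_{i'}-\alpha_i),
\]
and this is \emph{not} $n^2r$ away from the smooth type. For the central type $(r,r,\dots,r)$ it gives $0$; for $(nr-1,1,0,\dots,0)$ it gives $n^2r-nr-2$. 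In fact the sum you obtain is exactly the orbit dimension $\sum_{i<j}(\alpha_i-\alpha_j)$ of Proposition~\ref{dim}, so your ``$\V$-equivariance via matching twists'' is detecting only the tangent directions along the $\SL(n,\V)$-orbit, not the full tangent space to $\Lrn$. The case analysis you flag as delicate cannot be completed, because the combinatorial identity you are aiming for is false.

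The paper's own proof, by contrast, does not carry out the dimension count at all: it only constructs the bijection $\delta\mapsto\delta^*$ and its inverse, leaving the numerical assertion unargued in the displayed proof. The dimension statement is really imported from \cite{me} via the identification with $\mathcal{L}_{n,r}$: under Lemma~\ref{tlie} (whose proof shows the correct Hom is $\Hom_k(\mathfrak{m}/\theta\mathfrak{m},\F_r/\mathfrak{m})$, with no $\theta$- or twist-compatibility imposed on the target) one gets
\[
\dim = nr\cdot\dim_k(\mathfrak{m}/\theta\mathfrak{m}) = nr\cdot\#\{i:\alpha_i<nr\},
\]
which is $(n-1)nr$ for the type $(nr,0,\dots,0)$ and $n\cdot nr=n^2r$ for every other type, since $\alpha_i=nr$ can occur for at most one index. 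If you want to salvage your approach you must revisit what ``morphism of $\V$-module schemes'' is actually being asked of $\delta$: the classifying-map conditions A.9--A.10 and the exotic bimodule structure flagged in the Remark after Theorem~A.6 do not reduce to the naive twist-matching you use.
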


\begin{proof}The infinitesimal deformations of the lattice $L$ in $F$ are in bijective correspondence with the classifying maps $\delta$ satisfying Equations A.9 and A.10 of Proposition A.3. These maps are uniquely determined by their restrictions to $\mathfrak{n}_{F/L}$. Hence we may view them as maps from $\mathfrak{n}_{F/L}$ to $k[L]$ which satisfy A.9 and A.10. So suppose that $\delta$ is one such map. Now $\mathfrak{n}_{F/L}$ is spanned by the residue classes $\overline{x}_{i,j}$. As we remarked above the algebra $k[\mathfrak{n}_{F/L}]$ with the coaddition induced by the map $\alpha^N$ and the $k[\V_r]$ coaction induced by $\mu^N$ can be viewed as the coaction  determining the  $\V_r$ module structure on $\mathfrak{F}/\mathfrak{L}$ corresponding to the differential of the multiplicative homothety. A map from $\mathfrak{n}_{F/L}$ into $k[L]$ canonically determines an algebra map $k[\mathfrak{n}_{F/L}]\rightarrow k[L]$ and so corresponds to a map of varieties $\delta^* :L\rightarrow \mathfrak{F}/\mathfrak{L}$. Condition A.9 of Proposition A.3 and assertion (1) of Proposition A.5 imply that this morphism is an additive morphism of group schemes. Now $\mathfrak{F}/\mathfrak{L}$ is a vector space over a field of characteristic $p$ and so $\delta^*$ must vanish on $pL$. Hence the assignment $\delta \mapsto \delta^*$ maps $\mathcal{D}ef(L;F)$ to $\Hom_{\V}(L/pL, \mathfrak{F}/\mathfrak{L})$ where the homomorphisms are taken to be the algebraic morphisms for the additive and multiplicative structures. We give the inverse map.

Now the groupscheme $L/pL$ is the polynomial algebra $k[x_{1,\alpha_1}, \dots ,x_{n,\alpha_n}]$ with the coaction $\alpha(x_{i,\alpha_i})=x_{i,\alpha_i}\otimes 1+1\otimes x_{i,\alpha_i}$. The functions $x_{i,\alpha_i}$ are to be thought of as $pL$ invariant functions on $L$. Write $\overline{L}^*$ for the $k$ span of the functions $x_{i,\alpha_i}$. Then $k[\overline{L}^*]$ (the symmetric algebra on $\overline{L}$) is the coordinate ring of $L/pL$. Since $pL$ is an $\V$-submodule, it follows that the coaction $\mu:k[L]\rightarrow k[L]\otimes k[\V_r]$ restricts to a coaction on $k[L/pL]$ and under this coaction it can be verified that $\mu(\overline{L}^*)\subseteq \overline{L}^*\otimes k[\V_r]$. A map $\gamma:L/pL\rightarrow \mathfrak{F}/\mathfrak{L}$ of algebraic modules corresponds to a map of coordinate rings $k[\mathfrak{F}/\mathfrak{L}]\rightarrow k[L/pL]$ that "commutes" with the structure data. But $k[\mathfrak{F}/\mathfrak{L}]=k[\mathfrak{n}_{L/F}]$ and $k[L/pL]=k[\overline{L}^*]$. Hence $\gamma$ is determined by its restriction $\gamma:\mathfrak{n}_{L/F}\rightarrow k[\overline{L}^*]\subseteq k[L]$. It will give rise to an additive morphism if $\alpha^N \circ \gamma(u)= (\gamma \otimes 1,1\otimes \gamma)\circ \alpha^N$. Lastly $\gamma$ will be an algebraic module map only if $(\gamma \otimes \id) \circ \mu^N = \mu \circ \gamma$. These conditions imply that $\gamma$ is a map satisfying equations A.9 and A.10.   \end{proof} 
    
\subsection{Constructing explicit deformations}  For every lattice $L$ of co-rank $nr$ in $F$ there is some ordered basis $f_1, f_2,\dots ,f_n$ of $F$ so that $p^{\alpha_1}f_1, p^{\alpha_2}f_2,\dots, p^{\alpha_n}f_n$ is a basis of $L$. In this section it will be convenient to assume that the $\alpha_i$ are non-decreasing in $i$. This is opposite to our usual convention. We will refer to the sequence of elementary divisors $\{\alpha_1,\dots, \alpha_n\}$ as the type of $L$. Since all lattices of a given type lie in one $\SL(n,\V)$ orbit it suffices to consider lattices for which the $f_i$ can be taken to be the standard basis vectors $e_i$. We show that every type is a specialization of the maximal type, $\{0,\dots, 0, nr\}$ and that the type $\{r,r,\dots, r\}$ is a specialization of all types. If $\{\alpha_1, \dots, \alpha_n\}$ is not one of the two extreme types then there is some pair $\alpha_q, \alpha_s;\;q<s$ so that $\alpha_q<r$ and $\alpha_s>r$. We begin with a matrix $M(t)$ with entries in the Witt vectors evaluated on the polynomial domain $k[t]$. First we show that for non-zero values of $t$ the column space of this matrix is of type $\{\alpha_1,\dots, \alpha_n\}$ and that at $t=0$ it is of type $\{\alpha_1,\dots, \alpha_q+1,\dots, \alpha_s-1,\dots, \alpha_n\}$.  Then, viewing $M(t)$ as a morphism of $k[t]$-group schemes from $\mathbb{A}^1\times F$ to itself, we explicitly construct the coordinate ring of a family as the image of the morphism and show it to be a smooth family of group subschemes of $\mathbb{A}^1\times F$.

We begin with the definition of $M(t)$. For $j\neq q,s$, let $M_j=p^{\alpha_je_j}$. Let $M_q=p^{\alpha_q+1}e_q$ and let $M_s=\xi(t)^{p^{-\alpha_q}}p^{\alpha_q}e_q+p^{\alpha_s-1}e_s$. Notice that since $t$ does not have any $p$'th roots $\xi(t)^{p^{-\alpha_q}}p^{\alpha_q}$ is simply a Witt vector with one component and not a $k[t]$ multiple of $p^{\alpha_q}$. When $t$ is replaced by a value in the perfect field $k$ it becomes a unit multiple of $p^{\alpha_q}$.

 When $t$ is set equal to $0$ the columns yield a lattice of the desired specialization type. When $t=c\neq 0$ then by the remark above $M_q-\xi(c^{-p^{-\alpha_q}})^{-p}pM_s=\xi(c^{-p^{-\alpha_q}})^{-p}p^{\alpha_s}e_s$. Consequently $M_q$ and $M_q-\xi(c^{-p^{-\alpha_q}})^{-p}pM_s$ span a lattice of type $\{\alpha_1, \dots ,\alpha_q \} $. Hence the reduced parts of the fibers of this family behave exactly as required. What has not been established is that the family is a flat family with reduced fibers. To do this let $\{x_{j,i},\;0<i\leq nr-1$ be a set of indeterminates and let  $\x_j= \sum_0^{nr-1}\xi(x_{j,i})^{p^{-i}}p^i$ be $n$ (truncated) Witt vectors with indeterminate coefficients and let $\boldsymbol{X}$ denote an $n$-vector with entries $\x_i$. Let $F_0(t)$ and $F_1(t)$ be two isomorphic copies of $F/p^{nr}F\times_k\Spec (k[t])$ which we will call the source and the target of $M(t)$. Let $X$ denote the set of indeterminates $x_{i,j}$ defined above and let $Y$ be a set of indeterminates $y_{i,j}$ with the same indices.
Write $F_0(t)=\Spec (k[t,X]$ and $F_1(t)=k[t,Y]$. We consider the vector $M(t)\boldsymbol{X}$ which has $n$ truncated Witt vector entries. The contravariant morphism of rings will be written $m^\#$. Then $m^\#$ sends $y_{j,i}$ to the $i$'th component of the $j$'th entry of $M(t)\boldsymbol{X}$. Let $\y_j=\sum_0^{nr-1}\xi(y_{j,i})^{p^{-i}}p^i$ and let $\boldsymbol{Y}$ be a vector with $j$'th entry $\y_j$. Then, the image of multiplication by $M(t)$ is defined by the equation:
\[
\boldsymbol{Y}=M(t)\boldsymbol{X}=\left(\begin{matrix}p^{\alpha_1}\x_1 \\ \vdots \\ p^{\alpha_q+1}\x_q+\xi(t)^{p^{-\alpha_q}}p^{\alpha_q}\x_s \\ \vdots \\ p^{\alpha_s-1}\x_s \\ \vdots \\ p^{\alpha_n}\x_n  \end{matrix}\right)
\] 

Computing the Witt components of the entries of this column and setting them equal to the $m^\#$ image of the corresponding components in $\boldsymbol{Y}$ yields: 

\begin{align*}
 m^\#(y_{j,i})&=0\; \text{if}\; i<\alpha_j,j\neq q,s \\
    m^\#(y_{j,i})&=x_{j,i-\alpha_j}^{p^{\alpha_j}}\; \text{if}\; i\geq \alpha_j \\
    m^\#(y_{s,i})&=0 \; \text{if} i<\alpha_s-1 \\ 
    m^\#(y_{s,i})&=x_{s,i-\alpha_s+1}^{p^{\alpha_s-1}}\; \text{if}\;
 i\geq \alpha_s-1 \\ m^\#(y_{q,i})&=0 \; \text{if}\; i<\alpha_q \\
m^\#(y_{q,\alpha_q})&=tx_{s,0}^{p^{\alpha_q}}\\
m^\#(y_{q,j})&=\Phi_j(0,\dots, 0, x_{q,0}^
{p^{\alpha_{q}+1}}\dots,\\ &\quad x_{q,j-\alpha_q-1}^{p^{\alpha_q+1}}; 0\dots ,0,0, x_{s,1}^{p^{\alpha_q}},\dots ,x_{s,j-\alpha_q}^{p^{\alpha_q}})\;\text{if}\; j>\alpha_q
\end{align*}

If $\Phi_j(u_0,\dots ,u_j, v_0,\dots v_j)=\Phi_j$ is one of the Witt polynomials associated to addition then $\Phi_j=u_j+v_j+H$ where $H$ is a plynomial stricly in varables of lower index and in which each monomial contains both $u$ and $v$ terms. Now $m^\#(y_{j,i})=0$ for $j\neq s$ and $i<\alpha_j$ while $m^\#(y_{s,i})=0$ for $i<\alpha_s-1$. The other variables except for $y_{q,\alpha_q}$ are sent to a set of algebraically independent polynomials. The variable $y_{q,\alpha_q}$ however satisfies a relation. We note that $m^\#(y_{q,\alpha_q}-ty_{s,\alpha_s-1})=tx_{s,0}^{p^{\alpha_q}}-t(x_{s,0}^{p^{\alpha_s-1}})^p=0$. Thus if $J$ is the kernel of $m^\#$ it is the ideal generated by the expressions:

\begin{align*}
y_{j,i}&\;i<\alpha_j\; j\neq s \\ y_{s,j}&\;j<\alpha_s-1 \\ \intertext{and} 
y_{q,\alpha_q}&-ty_{s,\alpha_s-1}^p
\end{align*}

The $k[t]$-endomorphism of $k[t,Y]$ which is the identity on all the $y_{j,i}$ but $y_{q,\alpha_q}$ and which sends
$y_{q,\alpha_q}$ to $y_{q,\alpha_q}-ty_{s,\alpha_s-1}^p$ is an automorphism and so it is clear that $k[t,Y]/J$ is just a ring of polynomials in $(n^2-n)r$ variables over $k[t]$. Thus the family of deformations $M(t)X$ is smooth (and hence flat) over $k[t]$.

\end{document}